\documentclass[11pt]{amsart}
\usepackage{epsfig,overpic}
\usepackage[usenames,dvipsnames]{color}
\usepackage[colorlinks=true,linkcolor=blue,citecolor=BrickRed]{hyperref}

\textwidth=5.5in
\textheight=8in
\oddsidemargin=0.5in
\evensidemargin=0.5in
\topmargin=.5in

\newtheorem{thm}{Theorem}[section]
\newtheorem{lem}[thm]{Lemma}
\newtheorem{cor}[thm]{Corollary}
\newtheorem{prop}[thm]{Proposition}

\theoremstyle{definition}

\newtheorem{note}[thm]{Note}

\theoremstyle{remark}

\newcommand{\R}{\mathbf{R}}

\newcommand{\Z}{\mathbf{Z}}

\newcommand{\ol}[1]{{\overline #1}}

\newcommand{\Durer}{D\"{u}rer }
\newcommand{\Durers}{D\"{u}rer's }

\renewcommand{\S}{\mathbf{S}}

\renewcommand{\tilde}{\widetilde}

\DeclareMathOperator{\inte}{int}

\DeclareMathOperator{\st}{st}

\DeclareMathOperator{\cl}{cl}

\begin{document}

\title{Affine unfoldings of convex polyhedra}

\author[M.~Ghomi]{Mohammad Ghomi}
\address{School of Mathematics, Georgia Institute of Technology,
Atlanta, GA 30332}
\email{ghomi@math.gatech.edu}
\urladdr{www.math.gatech.edu/$\sim$ghomi}

\date{Last Typeset \today.}
\subjclass{Primary: 52B05, 57N35; Secondary: 05C10, 57M10.}
\keywords{Convex polyhedron, unfolding, development, spanning tree, edge graph, isometric embedding, immersion, covering spaces, \Durers problem.}
\thanks{Research of the author was supported in part by NSF Grants DMS-0336455, DMS-1308777, and Simons Collaboration Grant 279374.}

\begin{abstract}
We show that every convex polyhedron admits a simple edge unfolding after an affine transformation. In particular there exists no combinatorial obstruction to a positive resolution of \Durers unfoldability problem, which answers a question of Croft, Falconer, and Guy. Among other techniques, the proof employs a topological characterization for embeddings among the planar immersions of  the disk.
\end{abstract}

\maketitle

\tableofcontents

\section{Introduction}\label{sec:intro}
A well-known problem  in geometry \cite{do:book, orourke:book, pak:book, ziegler:book},  which may be traced back to the Renaissance artist Albrecht D\"{u}rer \cite{durer}, is concerned with cutting  a convex polyhedral surface along some spanning tree of its edges so that it may be
isometrically embedded, or unfolded without overlaps,   into the plane. Here we show that this is always possible after an affine transformation of the surface. In particular, unfoldability of a convex polyhedron does not depend on its combinatorial structure, which settles a  problem of Croft, Falconer, and Guy \cite[B21]{cfg}. 

In this work a (compact) \emph{convex polyhedron} $P$  is  the boundary of the convex hull of a finite number of affinely independent points of Euclidean space $\R^3$.
A \emph{cut tree} $T\subset P$ is
a (polygonal) tree which  includes all the vertices of $P$, and each of its leaves is a vertex of $P$. Cutting $P$ along $T$ yields a compact surface $P_T$ which  admits an isometric immersion $P_T\to\R^2$ (see Section \ref{sec:trees}),  called an \emph{unfolding} of $P$. This unfolding is   \emph{simple}, or an \emph{embedding}, if it is one-to-one. 
We say $P$ is in \emph{general position} with respect to a unit vector or \emph{direction} $u$ provided that the \emph{height function} $h(\cdot):=\langle \cdot, u\rangle$ has a unique maximizer and a unique minimizer on  vertices of $P$. Then $T$ is \emph{monotone} with respect to $u$ provided that $h$ is (strictly) decreasing on every simple path in $T$ which connects a leaf of $T$ to the vertex minimizing $h$. For $\lambda>0$, we define the (normalized) \emph{affine stretching} parallel to $u$ as the linear transformation $A_\lambda\colon\R^3\to\R^3$ given by 
\begin{equation*}
A_\lambda(p):=\frac{1}{\lambda}\big(\, p+(\lambda-1) \langle p,u\rangle u\,\big),
\end{equation*}
and set $X^\lambda:=A_\lambda(X)$ for any  $X\subset\R^3$. Note that if $u=(0,0,1)$,  then $A_\lambda(x,y,z)=(x/\lambda,y/\lambda,z)$. 
Thus $A_\lambda$  makes any convex polyhedron  arbitrarily ``thin" or ``needle-shaped" for large $\lambda$. Our main result is as follows:

 \begin{thm}\label{thm:main}
Let $P$ be a convex polyhedron, $u$ be a direction with respect to which $P$ is in general position, and $T\subset P$ be a cut tree which  is monotone with respect to $u$. Then the unfolding of $P^\lambda$ generated by  $T^\lambda$  is simple for sufficiently large 
$\lambda$.
\end{thm}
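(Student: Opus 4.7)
The plan is to invoke the topological characterization of planar embeddings among immersions of the disk announced in the abstract — namely that an isometric immersion of a topological disk into the plane is an embedding if and only if its restriction to the boundary is a simple closed curve — and to reduce the problem to analyzing the image of $\partial P^\lambda_{T^\lambda}$ in $\R^2$ for $\lambda$ large. Cutting $P^\lambda$ along $T^\lambda$ yields a topological disk $P^\lambda_{T^\lambda}$ whose unfolding $f_\lambda\colon P^\lambda_{T^\lambda}\to\R^2$ is an isometric immersion by construction, so the criterion applies.

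Next I would describe $\partial P^\lambda_{T^\lambda}$: it is a polygonal Jordan curve whose edges are the doubled cut edges of $T^\lambda$, traversed in a canonical cyclic order prescribed by $T$. I would then quantify the limit $\lambda\to\infty$. Because $A_\lambda$ preserves heights and contracts horizontal distances by $1/\lambda$, every edge $pq$ with $h(p)\ne h(q)$ — which by general position covers every edge of $T^\lambda$ — has length $|h(p)-h(q)|+O(1/\lambda^2)$. I would root the unfolding at a face of $P^\lambda$ containing the minimizer $v_{\min}$ and orient it so that the $y$-coordinate of each of its vertices approximates its height. Unfolding across hinge edges (those of $P$ not in $T$) preserves this approximation, so I would prove inductively that the $y$-coordinate of every unfolded vertex equals its height up to $O(1/\lambda)$. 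Monotonicity of $T$ with respect to $u$ then yields the qualitative picture: as one traverses $\partial P^\lambda_{T^\lambda}$, the $y$-coordinate of the image rises from $h(v_{\min})$ up to the height of a leaf of $T$ and descends back, once for each branch of $T$.

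The main obstacle is that in the strict limit $\lambda=\infty$ the boundary collapses vertically onto a segment and each cut edge is traced twice, so simplicity is visible only at the first-order horizontal correction. I would therefore rescale horizontally by $\lambda$ and pass to the limit, obtaining a polygonal curve whose vertices have $y$-coordinates equal to their heights and $x$-coordinates given by explicit sums along dual-tree paths involving the horizontal projections of the faces crossed. The core lemma is that monotonicity of $T$ forces any two boundary arcs sharing a height interval to be horizontally separated at order $1/\lambda$; in particular each vertex $v$ of $T$ appears $\deg_T(v)$ times in the rescaled polygon with pairwise distinct $x$-coordinates. Once simplicity of the rescaled limit boundary is established, a standard perturbation argument promotes it to simplicity of $f_\lambda(\partial P^\lambda_{T^\lambda})$ for all sufficiently large $\lambda$, and the topological criterion then gives the theorem. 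The central technical work lies in converting the tree-theoretic hypothesis — monotonicity of heights along $T$ — into the quantitative horizontal separation at the rescaled level, which I expect requires a careful bookkeeping of the composed hinge rotations along each dual-tree path.
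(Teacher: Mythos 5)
Your opening reduction is sound and matches the paper's first step: cutting along $T^\lambda$ gives an immersed disk, and by the fact that an immersed compact surface in the plane is embedded iff its boundary components are (the paper's Lemma \ref{lem:disk1} and Proposition \ref{prop:disk}), the problem becomes the simplicity of the developed boundary path. Your asymptotics for edge lengths and the observation that the $\lambda=\infty$ limit degenerates onto a vertical segment, so that simplicity lives at order $1/\lambda$, are also correct and consistent with Section \ref{sec:affine} of the paper.

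The gap is that your ``core lemma'' --- that monotonicity of $T$ forces any two boundary arcs sharing a height interval to be horizontally separated at order $1/\lambda$, with all $\deg_T(v)$ occurrences of each vertex getting distinct $x$-coordinates in the rescaled limit --- is asserted rather than proved, and it is essentially equivalent to the theorem itself. Nothing in your outline converts the tree-theoretic hypothesis into that separation: the rescaled horizontal positions are sums of products of $O(1/\lambda)$ angle deficits (whose precise first-order asymptotics you would also need to establish, not just the limits $\theta_i^\infty=\pi$) with vertical run-lengths, accumulated along paths in $\Gamma_T$, and showing these sums are consistently ordered for an arbitrary monotone tree --- including the cases where several junctures coincide with each other or with the root, so that many arcs crowd near the same developed point --- is exactly the hard combinatorial-geometric content. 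The paper does not take your ``compute the rescaled limit and perturb'' route at all; it avoids computing first-order corrections by (a) a finer embedding criterion, Proposition \ref{cor:disk}, for immersed disks whose boundary splits into two \emph{weakly monotone} arcs, (b) the hairpin lemma, Proposition \ref{prop:path2}, showing that the mixed development of a doubled monotone branch is simple with endpoint angles summing to less than $\pi$, and (c) an induction on the leaves $\ell_0,\dots,\ell_{k-1}$ using the dual branches $\beta_i'$ and subdisks $D_i$ to propagate weak monotonicity of $\ol\Gamma_i$. Some replacement for (b) and (c) --- i.e., an actual mechanism producing the horizontal separation --- is what your proposal is missing; without it the final ``standard perturbation argument'' has nothing to perturb from.
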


  When a cut tree  is composed  of the edges of $P$,  or is a spanning tree of the edge graph of $P$, the corresponding unfolding  is called an \emph{edge unfolding}. If $P$ admits a simple edge unfolding, then we say $P$ is \emph{unfoldable}. Note that there exists an open and dense set of directions $u$ in the  sphere $\S^2$ with respect to which $P$ is in general position. Furthermore, it is easy to construct monotone spanning edge trees  for every such direction. They may be generated, for instance, via the well studied ``steepest edge" algorithm \cite{schlick,lucier,do:book}, or a general procedure described in Note \ref{note:monotone}. Thus  Theorem \ref{thm:main} quickly yields:

\begin{cor}\label{cor:main}
An affine stretching of a convex polyhedron, in almost any direction, is unfoldable. \qed
\end{cor}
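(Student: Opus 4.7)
The plan is to deduce Corollary \ref{cor:main} from Theorem \ref{thm:main} by producing, for almost every direction $u \in \S^2$, a monotone spanning tree $T$ of the edge graph of $P$, and then invoking the theorem. The resulting unfolding of $P^\lambda$ will then automatically be a simple edge unfolding, which is exactly what unfoldability demands.

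First I would verify that the set of directions $u \in \S^2$ putting $P$ in general position is open and dense. Failure of general position forces two distinct vertices $v, w$ of $P$ to tie in $h$-value at the maximum or at the minimum, which pins $u$ to the great circle perpendicular to $v - w$. Since $P$ has only finitely many vertices, the bad set is a finite union of great circles, hence closed with empty interior.

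Second, for any such good direction $u$, I would build a monotone spanning edge tree by the standard descending-parent construction. Let $v_{\min}$ denote the unique vertex of $P$ minimizing $h$. For each other vertex $v$, I claim some edge of $P$ at $v$ leads to a strictly lower neighbor; choose one and call it $e(v)$. Verifying this claim is the only substantive point, and I would argue it via the tangent cone of $P$ at $v$: this cone is generated by the edge directions $u_i - v$ emanating from $v$, and it contains $v_{\min} - v$ because $P$ is convex. Writing $v_{\min} - v = \sum_i \lambda_i (u_i - v)$ with $\lambda_i \geq 0$ and applying $h$, which evaluates negatively on $v_{\min} - v$, forces $h(u_i) < h(v)$ for some $i$. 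The collection $T := \{e(v) : v \neq v_{\min}\}$ is then acyclic, since iterating the parent map $v \mapsto u_i$ strictly decreases $h$ and so terminates at $v_{\min}$; having $|V(P)| - 1$ edges on $|V(P)|$ vertices, $T$ is a spanning tree of the edge graph. Monotonicity of $T$ with respect to $u$ is then automatic: the unique path in $T$ from any leaf to $v_{\min}$ is a chain of parent steps along which $h$ strictly descends.

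Finally, I would apply Theorem \ref{thm:main} to $P$, $u$, and $T$ to conclude that the unfolding of $P^\lambda$ generated by $T^\lambda = A_\lambda(T)$ is simple for all sufficiently large $\lambda$. Since $A_\lambda$ is a linear isomorphism preserving the combinatorial structure of $P$, the image $T^\lambda$ remains a spanning tree of the edge graph of $P^\lambda$, so the unfolding is an edge unfolding and $P^\lambda$ is unfoldable, as asserted. The only delicate step is the downward-neighbor claim handled by the tangent-cone argument above; everything else is bookkeeping once Theorem \ref{thm:main} is in hand.
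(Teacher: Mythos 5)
Your proposal is correct and follows essentially the same route the paper intends: the paper's own justification is exactly the remark that general-position directions form an open dense (hence full-measure) subset of $\S^2$ together with the descending-edge construction of a monotone spanning tree sketched in Note \ref{note:monotone}, followed by an application of Theorem \ref{thm:main}. Your tangent-cone verification that every non-minimal vertex has a strictly lower neighbor simply supplies the detail the paper leaves implicit.
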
 

An example of this phenomenon  is illustrated in Figure \ref{fig:overlap}. 
\begin{figure}[h] 
   \centering
   \includegraphics[height=1.5in]{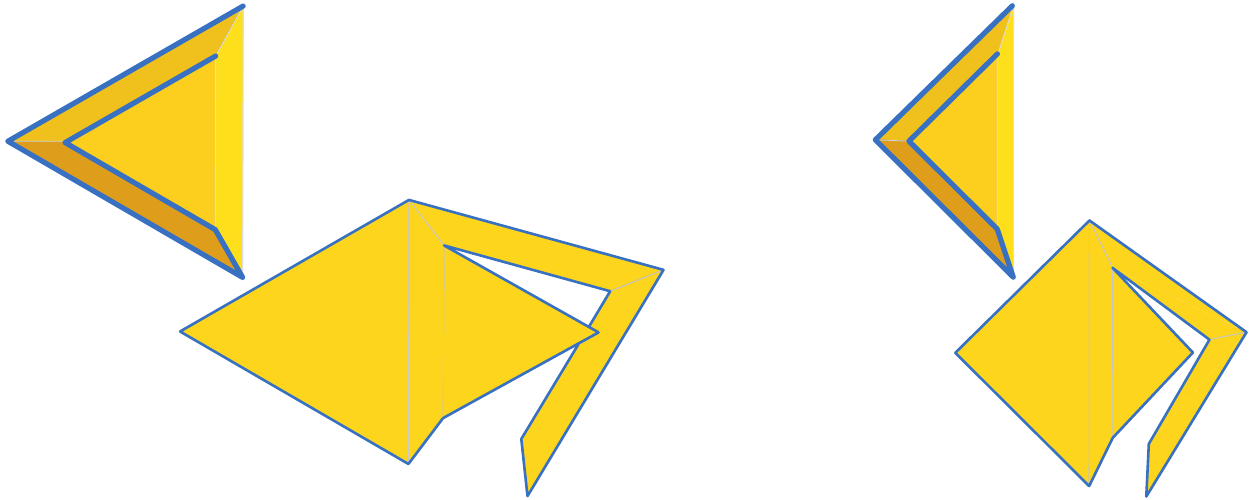} 
   \caption{}
   \label{fig:overlap}
\end{figure}
The left side of this figure shows a truncated tetrahedron (viewed from ``above") together with an overlapping  unfolding of it generated by a monotone edge tree.  As we see on the right side, however, the same edge tree generates a simple unfolding once the polyhedron has been stretched.

The rest of this work will be devoted to proving  Theorem \ref{thm:main}. We will start in Sections \ref{sec:paths} and \ref{sec:mixed} by recording some basic definitions and observations concerning the composition of paths in convex polyhedra and their developments in the plane. In particular we discuss the notion of ``mixed developments" which arises naturally in this context and constitutes a useful technical tool. Then, in Section \ref{sec:trees}, we will show that to each cut tree  there is associated a path whose development coincides with the boundary of the corresponding unfolding. Thus \Durers problem may be viewed as the search for spanning edge trees with simple developments. To this end, we will obtain in Section \ref{sec:immersion} a topological criterion for deciding when a closed planar curve which bounds an immersed disk is simple. This will be the principal tool for proving Theorem \ref{thm:main}, which will be utilized by 
means of an induction on the number of leaves of the  cut tree. To facilitate this approach we will study the structure of monotone cut trees  in Section \ref{sec:monotonetrees}, and the effect of affine stretchings on their developments in Section \ref{sec:affine}. Finally, these observations will be  synthesized  in Section \ref{sec:proof} to complete the proof.

The earliest known examples of simple edge unfoldings for convex polyhedra are due to 
\Durer \cite{durer}, although the  problem which bears his name  was first formulated by Shephard \cite{shephard}.  Furthermore, the assertion that a solution can always be found,  which has been dubbed \Durers conjecture, appears to have been first published by Gr\"{u}nbaum  \cite{grunbaum, grunbaum2}.
There is empirical evidence both for and against this  supposition. On the one hand, computers have found simple edge unfoldings for countless convex polyhedra through an exhaustive search of their spanning edge trees. On the other hand, there is still no algorithm for finding the right tree \cite{schlick,lucier}, and computer experiments suggest that the probability that a random edge unfolding of a generic polyhedron overlaps itself approaches $1$ as the number of vertices grow \cite{schevon:thesis}.  General cut trees  have been studied at least as far back as Alexandrov \cite{alexandrov:polyhedra} who first established the existence of simple unfoldings  (not necessarily simple \emph{edge} unfoldings) for all convex polyhedra, see also \cite{itoh&orourke,miller&pak,dd&orourke} for  recent related results. Other references and background  may be found in \cite{do:book}.

\begin{note}
A chief difficulty in assailing \Durers problem  is  the lack of any intrinsic characterization for an edge of a convex polyhedron $P$. Indeed the edge graph of $P$ is not the unique graph in $P$ whose vertices coincide with those of $P$, whose edges are geodesics, and whose faces are convex. It seems reasonable to expect that \Durers conjecture should be true if and only if it holds for this wider class of generalized edge graphs. This approach  has been studied by Tarasov \cite{tarasov}, who has announced some negative results in this direction.
\end{note}

\begin{note}
As we mentioned above, one way to generate some monotone trees in a convex polyhedron is via the ``the steepest edge" algorithm which has been well studied due to its relative effectiveness in finding simple unfoldings. Indeed Schlickenrieder \cite{schlick} 
 had conjectured 
that every convex polyhedron contains at least one steepest edge tree which generates a simple unfolding. He had successfully tested this conjecture in thousands of cases, after a thorough examination of various kinds of spanning edge trees and cataloguing their failure to produce simple unfoldings. Subsequently, however, Lucier \cite{lucier} produced a counterexample to Schlickenrieder's conjecture. Although it is not clear  whether all monotone trees in Lucier's example fail to produce  simple unfoldings.
\end{note}

\begin{note}
\Durers problem is usually  phrased in somewhat broader terms than described above: \emph{can every convex polyhedral surface be cut along some collection $T$ of its edges so that the resulting surface $P_T$ is connected and admits an isometric embedding into the plane?} In other words,  it is not a priori assumed that $T$ is a spanning tree. Assuming that this is the case, however, does not cause  loss of generality. Indeed it is obvious that the cut set $T$ must contain every vertex of $P$ (for otherwise $P_T$ will not be locally isometric to the plane), and $T$ may not contain any cycles (for then $P_T$ will not be connected). Furthermore, it follows  fairly quickly from the Gauss-Bonnet theorem  that $T$ must be connected \cite[Lem. 22.1.2]{do:book}. So $T$ is indeed  a spanning tree.
\end{note}

\begin{note}\label{note:monotone}
A general procedure for constructing monotone spanning edge trees $T$ in a convex polyhedron $P$ may be described as follows. The only requirement here is that $P$ be positioned so that it has a unique bottom vertex $r$. Then, since $P$ is convex, every vertex $v$ of $P$ other than $r$ will be adjacent to a vertex which lies  below it, i.e., has smaller $z$-coordinate. Thus, by moving down through a sequence of adjacent vertices, we  may connect $v$ to $r$ by means of a monotone edge path (with respect to $u=(0,0,1)$). Let $v_0$ be a top vertex of $P$, and $ B_0$ be a monotone edge path which connects $v_0$ to $r$. If $ B_0$ covers all vertices of $P$, then we set $T:= B_0$ and  we are done. Otherwise, from the remaining set of vertices choose an element $v_1$  which maximizes the $z$-coordinate on that set. Then we generate a monotone edge path $B_1$  by connecting $v_1$ to an adjacent vertex which lies below it and continue to go down through adjacent vertices until we reach a vertex of $B_0$ (including $r$). If $ B_0$ and $ B_1$ cover all the vertices of $P$, then we set $T:= B_0\cup B_1$ and we are done. Otherwise we repeat the above procedure, until  all vertices of $P$ have been covered.
\end{note}

\section{Preliminaries}\label{sec:paths}
For easy reference, we  begin by recording here the definitions and notation which will be used most frequently in the following pages.

\subsection{Basic terminology}\label{subsec:term}
Throughout this work $\R^n$ is the $n$-dimensional Euclidean space with standard inner product $\langle \cdot,\cdot\rangle$  and norm $\|\cdot\|$. Further $\S^{n-1}$ denotes the unit sphere in $\R^n$. The \emph{height function}   is the mapping $h\colon\R^n\to\R$ given by 
$
h(x_1,\dots, x_n):=x_n,
$
and $P$ denotes (the boundary of) a  (compact) convex polyhedron in $\R^3$ which is oriented by the outward unit normals to its faces.
  We assume that $P$   is positioned so that it has a  single \emph{top vertex} $\ell_0$ and a  single \emph{bottom vertex} $r$, i.e.,  $h$ has a unique maximizer and a unique minimizer on $P$.  Furthermore, $T$ is a cut tree of $P$  which is rooted at $r$. The \emph{leaves} of $T$ are the vertices of $T$ of degree $1$ which are different from $r$. The simple paths in $T$ which connect its leaves to $r$ will be called the \emph{branches} of $T$.
  We will assume, unless stated otherwise,  that $T$ is \emph{monotone}, by which we will always mean monotone with respect to  $u=(0,0,1)$. So $h$ will be (strictly) decreasing on each branch of $T$. We let $P_T$ be the surface obtained by cutting $P$ along $T$, and $\pi\colon P_T\to P$ be the corresponding projection (as will be defined in Section \ref{sec:trees}). Further $\ol P_T$ will denote the image of $P_T$ under an unfolding $P_T\to\R^2$. We say $\ol P_T$ is \emph{simple} if the unfolding map is one-to-one. More generally, for any mapping $f\colon X\to\R^2$ and subset $X_0\subset X$, we set $\ol X_0:=f(X_0)$ and say $\ol X_0$ is simple if
   $f$ is one-to-one on $X_0$. Finally,  by \emph{sufficiently large}  we mean for all values bigger than some constant.

\subsection{Paths and their compositions}\label{subsec:paths}
A line segment  in $\R^n$ is \emph{oriented} if one of its end points, say $a$, is designated as the ``initial point" and the other, say $b$, as the ``final point". Then the segment will be denoted by $ab$.
A \emph{path} $\Gamma$  is a sequence of oriented line segments in $\R^n$ such that the final point of each segment coincides with the initial point of the succeeding segment. These segments  are  called the \emph{edges} of $\Gamma$, and their end points constitute its \emph{vertices}. The vertices of $\Gamma$ inherit a natural ordering $\gamma_0,\dots,\gamma_k$, where $\gamma_0$ is the initial point of the first edge, $\gamma_k$ is the final point of the  last edge, and successive elements share a common edge. Conversely, any sequence of points $\gamma_0,\dots,\gamma_k$ of $\R^n$ with distinct successive elements determines a path denoted by:
$$
\Gamma=[\gamma_0,\dots,\gamma_{k}]:=(\gamma_0\gamma_1,\dots,\gamma_{k-1}\gamma_k).
$$
Then  $\gamma_0$, $\gamma_{k}$ are   the \emph{initial} and \emph{final} vertices of $\Gamma$ respectively. Any other vertex of $\Gamma$ will be called an \emph{interior vertex}. If only consecutive edges of $\Gamma$ intersect, and do so only at their common vertex, then  $\Gamma$ is \emph{simple}.
We say that $\Gamma$ is \emph{closed} if $\gamma_{k}=\gamma_0$, in which case we set $\gamma_{i+k}:=\gamma_{k}$,  and consider all vertices of $\Gamma$ to be interior vertices. An interior vertex is \emph{simple} if its adjacent vertices are distinct.  The \emph{trace} of $\Gamma$ is  the union of the edges of $\Gamma$, which will again be denoted by $\Gamma$. 
  For any pair of vertices $v$, $w$ of $\Gamma$ we let $vw=(vw)_\Gamma$ denote the subpath of $\Gamma$ with initial point $v$ and final point $w$. The trace of this path will also be denoted by $vw$.

 We utilize two different notions for combining a pair of paths $\Gamma=[\gamma_0,\dots,\gamma_k]$ and $\Omega=[\omega_0,\dots,\omega_\ell]$, when
$\gamma_k=\omega_0$. The \emph{concatenation} of these paths is given by
$$
\Gamma\bullet\Omega:=[\gamma_0,\dots,\gamma_k, \omega_1,\dots,\omega_\ell],
$$
while their \emph{composition} is defined as
$$
\Gamma\circ\Omega:=[\gamma_0,\dots,\gamma_{k-m}, \omega_{m+1},\dots,\omega_\ell],
$$
where $m$ is the largest integer such that $\gamma_{k-i}=\omega_i$ for $0\leq i\leq m$. One may think of $\Gamma\circ\Omega$ as the path obtained from $\Gamma\bullet\Omega$ by excising its largest subpath centered at $\gamma_k$ which double backs on itself, see Figure \ref{fig:compositions}. This notion has also been studied in \cite[p. 1770]{berestovski&plaut}.
Finally we set
$
\Gamma^{-1}:=[\gamma_k,\dots,\gamma_0].
$
Note that $\Gamma\circ\Gamma^{-1}=[\gamma_0]$ which  may be considered a \emph{trivial path}.

\begin{figure}[h] 
   \centering
   \begin{overpic}[height=0.7in]{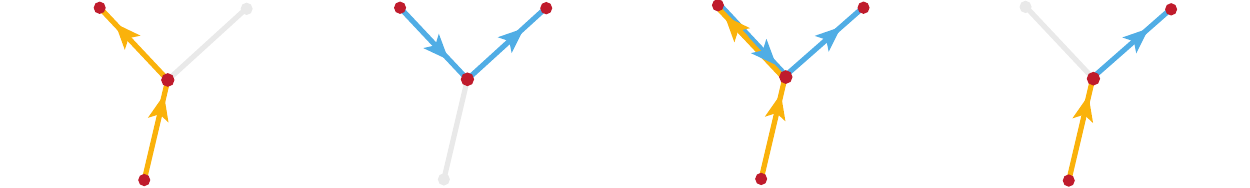} 
   \put(8,1){$\Gamma$}
    \put(32,1){$\Omega$}
     \put(52,1){$\Gamma\bullet\Omega$}
      \put(77,1){$\Gamma\circ\Omega$}
   \end{overpic}
   \caption{}
   \label{fig:compositions}
\end{figure}

\subsection{Sides and angles}\label{subsec:sidesAndangles}
In this section $P$ needs not be compact; in particular, it may stand for $\R^2\simeq\R^2\times\{0\}\subset\R^3$ with ``outward normal" $(0,0,1)$.
 A \emph{side} of a simple closed path $\Gamma$ in $P$ is the closure of a component of $P-\Gamma$. We may distinguish these  sides  as follows. Choose a point $x$ in the interior of an edge $\gamma_i\gamma_{i+1}$ of $\Gamma$, pick a side $S$ of $\Gamma$, let $F$ be the face of $S$ which contains $x$,  $n$ be the outward unit normal to $F$, and $\nu$ be a unit  normal to $\gamma_i\gamma_{i+1}$ which points inside $S$. Then  $S$ lies to the \emph{left} of $\Gamma$  provided that 
 $(\gamma_{i+1}-\gamma_i, n,\nu)$ has positive determinant; otherwise,  $S$ lies to the \emph{right} of $\Gamma$.
If $\Gamma$ is not simple or closed, one may still define a local notion of sides near its interior vertices as we describe below.

For any point $o\in P$, let $\st_o$ denote the \emph{star} of $o$, i.e., the union of  faces of $P$ which contain $o$. Orient the boundary curve $\partial st_o$ by choosing a cyclical ordering for its vertices, so that $st_o$ lies to the left of it. Any point $x\in \st_o-\{o\}$ generates a ray $R_x\subset\R^3$ which emanates from $o$ and passes through $x$. Let $\widehat\st_o$ denote the intersection of these rays with the unit ball in $\R^3$ centered at $o$. Then the \emph{total angle} of $P$ at $o$, denoted by $\angle_P(o)$, is  the length of  $\partial\widehat\st_o$. 
Next, for any pair of points $a$, $b$ in $\st_o-\{o\}$, we define the (left) \emph{angle} $\angle(a,o,b)$  of the path $[a,o,b]$. Consider the projection $\st_o-\{o\}\to\partial \widehat\st_o$ given by $x\mapsto\widehat x:=R_x\cap \partial \widehat\st_o$.
This establishes  a bijection $\partial st_o\to \partial \widehat \st_o$ which orients $\partial \widehat \st_o$. 
Let $|\cdot |$ denote the length of oriented segments of $\partial \widehat \st_o$ and set
\begin{equation*}
\angle(a,o,b):=
\begin{cases}
|\widehat b\widehat a|, & \widehat a\neq \widehat b;\\
\angle_P(o), &\widehat a=\widehat b.
\end{cases}
\end{equation*} 
 In particular note that if $\widehat a\neq\widehat b$, then 
\begin{equation}\label{eq:p1}
\angle(a,o,b)+\angle(b,o,a)=\angle_P(o).
\end{equation}
 If $\widehat a=\widehat b$, then we define the entire $\st_o$ as the \emph{left side} of $[a,o,b]$. Otherwise, $R_a\cup R_b$   divides $\st_o$ into a pair of components. The closure of each of these components will be called a \emph{side} of  $[a,o,b]$. The projection $\st_o-\{o\}\to\partial \widehat{st}_o$ maps one of these regions to the (oriented) segment $\widehat a\widehat b$  and the other to  $\widehat b\widehat a$, which will be called the \emph{right} and \emph{left} sides of $[a,o,b]$ respectively. Finally,   $c\in\st_o$ lies \emph{strictly} to the left (resp. right) of $[a,o,b]$ if $c$ lies in the left (resp. right) side of $[a,o,b]$ and is disjoint from  $R_a\cup R_b$. 
The following elementary observations will  be  useful throughout this work.

\begin{lem}\label{lem:abc}
Let $o\in P$, and $a$, $b$, $c\in \st_o-\{o\}$. Then we have:
\begin{enumerate}
\item[(i)]{$c$ lies strictly to the left of $[a,o,b]$ if and only if 
$
\angle (a,o,c)<\angle (a,o,b).
$
}
\item[(ii)]{If $c$ lies strictly to the left of $[a,o,b]$, then 
$
\angle (a,o,c)+\angle (c,o,b)=\angle (a,o,b).
$
}
\end{enumerate}
\end{lem}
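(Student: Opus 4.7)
The plan is to reduce both statements to elementary facts about oriented arc length on the simple closed curve $\partial\widehat{\st}_o$. Recall that the projection $x\mapsto\widehat{x}$ sends $\st_o-\{o\}$ onto $\partial\widehat{\st}_o$, and that $\partial\widehat{\st}_o$ is oriented (inherited from $\partial\st_o$) and has total length $\angle_P(o)$. The two key translations are: by definition, $\angle(a,o,b)$ is the length of the arc traversed from $\widehat{b}$ to $\widehat{a}$ in the positive direction on $\partial\widehat{\st}_o$; and, again by definition, the projection identifies the left side of $[a,o,b]$ (intersected with $\st_o-\{o\}$) with the oriented arc $\widehat{b}\widehat{a}$. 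Once these are in hand, both statements become manipulations with arc-length distances on a circle.

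First I would address (i). If $c$ lies strictly to the left of $[a,o,b]$, then by definition $c$ is disjoint from $R_a\cup R_b$, so $\widehat{c}\neq\widehat{a},\widehat{b}$, and $\widehat{c}$ is an interior point of the oriented arc from $\widehat{b}$ to $\widehat{a}$. Hence the oriented subarc $\widehat{c}\widehat{a}$ is a proper sub-arc of $\widehat{b}\widehat{a}$, giving $|\widehat{c}\widehat{a}|<|\widehat{b}\widehat{a}|$, which is precisely $\angle(a,o,c)<\angle(a,o,b)$. Conversely, the strict inequality $\angle(a,o,c)<\angle(a,o,b)$ forces $\angle(a,o,b)<\angle_P(o)$, hence $\widehat{a}\neq\widehat{b}$, and the inequality of oriented arc lengths places $\widehat{c}$ strictly between $\widehat{b}$ and $\widehat{a}$ on the positively oriented arc. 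This puts $c$ in the left side and off $R_a\cup R_b$, i.e.\ strictly to the left. The degenerate case $\widehat{a}=\widehat{b}$ (where the left side is all of $\st_o$) requires a brief separate check: ``strictly left'' then just means $\widehat{c}\neq\widehat{a}$, while $\angle(a,o,b)=\angle_P(o)$ and $\angle(a,o,c)=|\widehat{c}\widehat{a}|<\angle_P(o)$.

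Part (ii) will then follow immediately by additivity of oriented arc length. Since $\widehat{c}$ is an interior point of the oriented arc from $\widehat{b}$ to $\widehat{a}$, this arc decomposes as $\widehat{b}\widehat{c}$ followed by $\widehat{c}\widehat{a}$, giving
\[
\angle(a,o,b)=|\widehat{b}\widehat{a}|=|\widehat{b}\widehat{c}|+|\widehat{c}\widehat{a}|=\angle(c,o,b)+\angle(a,o,c).
\]
If instead $\widehat{a}=\widehat{b}$, additivity around the entire closed curve $\partial\widehat{\st}_o$ yields $|\widehat{c}\widehat{a}|+|\widehat{b}\widehat{c}|=\angle_P(o)=\angle(a,o,b)$, and the identity again holds. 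I do not foresee any serious obstacle here; the only delicate point is keeping the orientation conventions consistent so that the arc $\widehat{b}\widehat{a}$ genuinely corresponds to the left side (this is exactly what the orientation of $\partial\st_o$ placing $\st_o$ on its left was set up to ensure), and handling the degenerate case $\widehat{a}=\widehat{b}$ uniformly.
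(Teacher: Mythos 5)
Your proof is correct and takes essentially the same route as the paper's: both reduce the statement to oriented arc-length bookkeeping on $\partial\widehat{\st}_o$, split into the cases $\widehat{a}=\widehat{b}$ and $\widehat{a}\neq\widehat{b}$, and get (ii) from additivity of arc length. One small slip: in the converse of (i), the inference ``$\angle(a,o,c)<\angle(a,o,b)$ forces $\angle(a,o,b)<\angle_P(o)$, hence $\widehat{a}\neq\widehat{b}$'' is false (the hypothesis is compatible with $\widehat{a}=\widehat{b}$, where $\angle(a,o,b)=\angle_P(o)$), but this is harmless since you treat the degenerate case $\widehat{a}=\widehat{b}$ separately at the end, exactly as the paper does.
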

\begin{proof}
To see (i)  first assume that $\widehat a=\widehat b$, see the left diagram in Figure \ref{fig:angle}. Then  $c$ lies strictly to the left of $[a,o,b]$ if and only if $\widehat c\neq\widehat a,\widehat b$.
Furthermore $\angle (a,o,c)<\angle_P(o)=\angle (a,o,b)$ if and only if $\widehat c\neq\widehat a,\widehat b$.
Next we establish (i) when $\widehat a\neq \widehat b$, see the right diagram in Figure \ref{fig:angle}. In this case, if $c$ lies strictly to the left of $[a,o,b]$, then
$\widehat c\in\inte(\widehat b\widehat a)$, the interior of $\widehat b\widehat a$ in $\partial \widehat{st}_o$. Thus
 \begin{figure}[h] 
   \centering
   \begin{overpic}[height=0.9in]{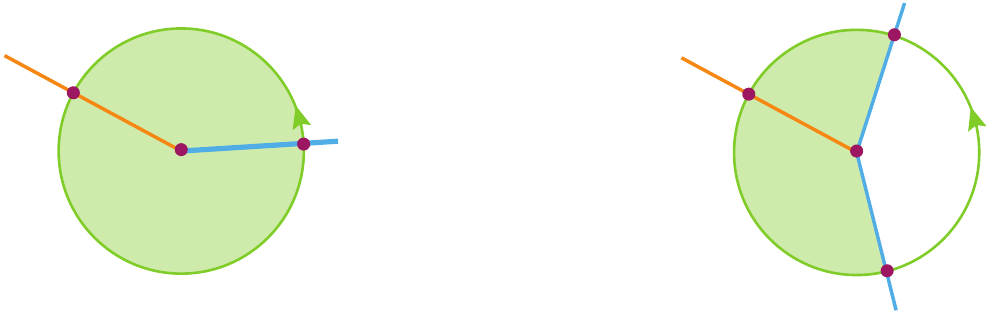} 
   \put(91,1){$\widehat a$}
   \put(92,28){$\widehat b$}
   \put(82,14){$o$}
    \put(70,19){$\widehat c$}
     \put(14.5,14){$o$}
    \put(3,19){$\widehat c$}
     \put(32,18.5){$\widehat a$}
   \put(32,11){$\widehat b$}
   \end{overpic}
   \caption{}
   \label{fig:angle}
\end{figure}
 $\angle (a,o,c)=|\widehat c\widehat a|<|\widehat b\widehat a|=\angle (a,o,b)$. Conversely, if $\angle (a,o,c)<\angle (a,o,b)$ (and $\widehat a\neq \widehat b$), then $\widehat c\neq \widehat a, \widehat b$. Consequently  $|\widehat c\widehat a|<|\widehat b\widehat a|$ which  yields $\widehat c\in \widehat b\widehat a$. So, since $\widehat c\neq \widehat a, \widehat b$,  it follows that  $c$ lies strictly to the left of $[a,o,b]$. 
To see (ii) note that if $\widehat a=\widehat b$ and $\widehat c\neq \widehat a, \widehat b$, then $\angle (a,o,c)+\angle (c,o,b)= \angle (a,o,c)+\angle (c,o,a)=\angle_P(o)=\angle(a,o,b)$.  If, on the other hand, $\widehat a\neq \widehat b$, and $c$ lies strictly to left of $[a,o,b]$, then $\widehat c\in\inte(\widehat b\widehat a)$. Thus  $\angle(a,o,b)=|\widehat b \widehat a|<|\widehat b \widehat c|+|\widehat c\widehat a|=\angle (a,o,c)+\angle (c,o,b)$.
\end{proof}

\section{Mixed Developments of Paths}\label{sec:mixed}
In this section we describe a general notion for  developing a path $\Gamma=[\gamma_0,\dots,\gamma_k]$ of $P$ into the plane, and show (Proposition \ref{prop:mixed})
 how this concept interacts with that of  composition of paths discussed in the last section. 
 First  we  define 
the \emph{left  angle}  of $\Gamma$ at an interior vertex $\gamma_i$  
 by 
 $$
 \theta_i=\theta_i[\Gamma]=\theta_{\gamma_i}[\Gamma]:=\angle (\gamma_{i-1},\gamma_i,\gamma_{i+1}).
 $$
 Further  the corresponding \emph{right angle}  is given by 
$$
{\theta_i}':=\angle (\gamma_{i+1},\gamma_i,\gamma_{i-1})=\angle_P(\gamma_i)-\theta_i,
$$
where the last equality follows from \eqref{eq:p1}. In particular we have 
\begin{equation}\label{eq:angles}
\theta_i+{\theta_i}'=\angle_P(\gamma_i)\leq 2\pi,
\end{equation}
due to the convexity of $P$.
It will also be useful to note that
$
{\theta_i}'[\Gamma]=\theta_{k-i}[\Gamma^{-1}].
$
A \emph{mixed development} of $\Gamma$  is a path
$
\ol\Gamma=[\ol\gamma_0,\dots, \ol\gamma_k]
$
in $\R^2$ with left angles $\ol\theta_i$,  and right  angles  $\ol{\theta_i}'$,  such that 
\begin{itemize}
\item[(i)]{
$\|\gamma_i-\gamma_{i-1}\|=\|\ol \gamma_i-\ol \gamma_{i-1}\|, \; \text{for}\; 1\leq i\leq k$};
 \item[(ii)] { $\ol\theta_i=\theta_i$ or $\ol{\theta_i}'={\theta_i}',\; \text{for}\; 1\leq i\leq k-1$}.
 \end{itemize}
  If  $\ol\theta_i=\theta_i$ for all $i$, then  $\ol\Gamma$ is a \emph{(left) development}. We say $\ol\Gamma$ is a \emph{mixed development} based at  an interior vertex $\gamma_\ell$ if $\ol{\theta_i}'={\theta_i}'$ for  $i\leq\ell$, and $\ol\theta_i=\theta_i$ for all $i>\ell$. This path will be denoted by $(\ol\Gamma)_{\gamma_\ell}$, and unless noted otherwise, the term $\ol\Gamma$ will be reserved to indicate a (left) development. We also set $(\ol\Gamma)_{\gamma_0}:=\ol\Gamma$. Note that $\ol\Gamma$ is uniquely determined once its \emph{initial condition} $(\ol\gamma_0,\ol u_0)\in\R^2\times\S^1$ has been prescribed, where  $\ol u_0:=(\ol\gamma_1-\ol\gamma_0)/\|\ol\gamma_1-\ol\gamma_0\|$ is the direction of the first edge.
A pair of paths $\Gamma$, $\Omega$ in $\R^2$ are \emph{congruent} if they coincide up to a (proper) rigid motion, in which case we write $\Gamma\equiv\Omega$.

\begin{figure}[h] 
   \centering
   \begin{overpic}[height=1.1in]{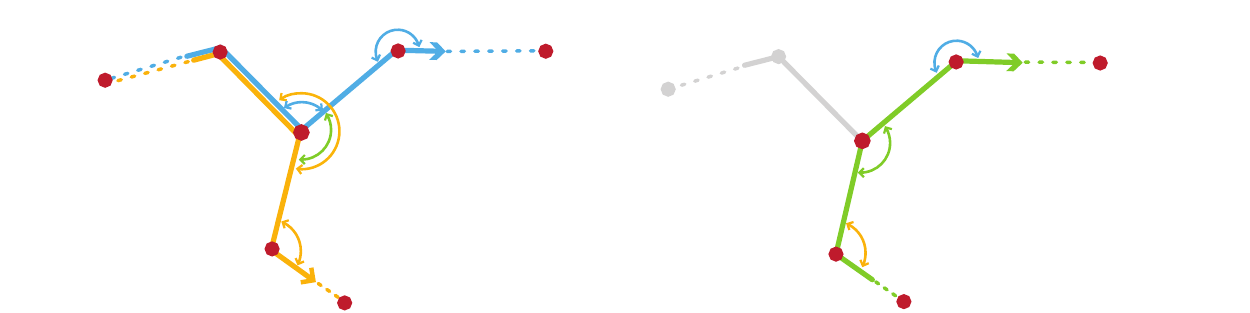} 
   \put(5,16){$\gamma_0$}
   \put(5,21){$\omega_0$}
    \put(19,14){$\gamma_m$}
   \put(22,19){$\omega_m$}
    \put(29,0){$\gamma_k$}
   \put(43,23){$\omega_\ell$}
     \put(72,12){$\delta_{k-m}$}
    \put(74,0){$\delta_0$}
   \put(84,22){$\delta_{k+\ell-2m}$}
   \end{overpic}
   \caption{}
   \label{fig:benz}
\end{figure}

\begin{prop}\label{prop:mixed}
Let $\Gamma=[\gamma_0,\dots,\gamma_k]$, $\Omega=[\omega_0,\dots,\omega_\ell]$ be a pair of paths in $P$ such that $\gamma_i=\omega_i$ for $i=0,\dots, m<\ell$. Further suppose that either $m=k$, or else
$\omega_{m+1}$ lies strictly to the left of $[\gamma_{m-1},\gamma_m, \gamma_{m+1}]$.   Then 
$$
(\overline{\Gamma})^{-1}\circ\overline{\Omega}\equiv\overline{(\Gamma^{-1}\circ\Omega)}_{\gamma_m},
$$
provided that  $\ol\Gamma$ and $\ol\Omega$ have the same initial conditions.
\end{prop}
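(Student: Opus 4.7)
The plan is to verify, by a direct comparison, that the two planar paths $(\ol\Gamma)^{-1}\circ\ol\Omega$ and $\ol{(\Gamma^{-1}\circ\Omega)}_{\gamma_m}$ have identical edge lengths and identical left angles at every interior vertex; the standard fact that such data determine a planar polygonal path up to a proper rigid motion then yields congruence.

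First I would write out both paths explicitly. On the surface side, the condition $\gamma_i=\omega_i$ for $0\le i\le m$ together with the strict ``to-the-left'' hypothesis (when $m<k$) and Lemma~\ref{lem:abc}(i) give $\gamma_{m+1}\ne\omega_{m+1}$, so
$$
\Gamma^{-1}\circ\Omega=[\gamma_k,\gamma_{k-1},\dots,\gamma_m,\omega_{m+1},\dots,\omega_\ell].
$$
In the plane, the shared initial conditions and the equality of left angles at $\gamma_1,\dots,\gamma_{m-1}$ force $\ol\gamma_i=\ol\omega_i$ for $0\le i\le m$; Lemma~\ref{lem:abc}(i) applied to the strict-left hypothesis (or trivially when $m=k$) then gives $\ol\gamma_{m+1}\ne\ol\omega_{m+1}$, so the planar composition truncates at the same index,
$$
(\ol\Gamma)^{-1}\circ\ol\Omega=[\ol\gamma_k,\dots,\ol\gamma_m,\ol\omega_{m+1},\dots,\ol\omega_\ell].
$$
Edge lengths match vertex-by-vertex since $\ol\Gamma$ and $\ol\Omega$ are developments, so only the left angles remain.

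The main obstacle is matching the left angles at each interior vertex. At a reversed $\Gamma$-vertex $\ol\gamma_{k-i}$ with $1\le i\le k-m-1$, the planar left angle of $(\ol\Gamma)^{-1}\circ\ol\Omega$ is the planar right angle of $\ol\Gamma$ there, namely $2\pi-\theta_{k-i}[\Gamma]$; on the other side, the mixed-development rule at the corresponding index in $\Gamma^{-1}\circ\Omega$ prescribes the right angle there, which after unwinding the reversal equals $\theta_{k-i}[\Gamma]$, so the planar left angle in $\ol{(\Gamma^{-1}\circ\Omega)}_{\gamma_m}$ is likewise $2\pi-\theta_{k-i}[\Gamma]$. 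At an $\ol\omega$-vertex $\ol\omega_j$ with $m<j<\ell$, both prescriptions equal $\theta_j[\Omega]$. At the transition vertex $\ol\gamma_m=\ol\omega_m$, I must verify
$$
\angle(\ol\gamma_{m+1},\ol\gamma_m,\ol\omega_{m+1})=2\pi-\angle(\omega_{m+1},\gamma_m,\gamma_{m+1}).
$$
Since $\ol\gamma_{m-1}=\ol\omega_{m-1}$, the planar vectors from $\ol\gamma_m$ to $\ol\gamma_{m+1}$ and to $\ol\omega_{m+1}$ arise by rotating the direction from $\ol\gamma_m$ toward $\ol\gamma_{m-1}$ counterclockwise through $\theta_m[\Gamma]$ and $\theta_m[\Omega]$, respectively; hence the counterclockwise angle from the first to the second is $2\pi-(\theta_m[\Gamma]-\theta_m[\Omega])$. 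Lemma~\ref{lem:abc}(ii), applied to $c=\omega_{m+1}$ strictly to the left of $[\gamma_{m-1},\gamma_m,\gamma_{m+1}]$, yields $\theta_m[\Gamma]-\theta_m[\Omega]=\angle(\omega_{m+1},\gamma_m,\gamma_{m+1})$, giving the required identity.

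With edge lengths and left angles matched at every interior vertex (the degenerate case $m=k$ is immediate, since the transition vertex becomes the initial vertex and $(\ol\Gamma)^{-1}\circ\ol\Omega$ reduces to the tail of $\ol\Omega$ starting at $\ol\omega_k$), the two paths are congruent, completing the proof.
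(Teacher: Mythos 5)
Your proposal is correct and follows essentially the same route as the paper: write out both compositions explicitly, match edge lengths, match the reversed-$\Gamma$ angles and the $\Omega$-angles directly, and handle the transition vertex $\gamma_m$ via Lemma~\ref{lem:abc} to show the angle there equals $\theta_m[\Gamma]-\theta_m[\Omega]$ on both the surface and the planar side. The only cosmetic difference is that at the transition vertex you compute the planar angle by rotation bookkeeping, whereas the paper first verifies (via Lemma~\ref{lem:abc}(i)) that $\ol\omega_{m+1}$ lies strictly to the left of $[\ol\gamma_{m-1},\ol\gamma_m,\ol\gamma_{m+1}]$ and then reapplies Lemma~\ref{lem:abc}(ii) in the plane; these are equivalent.
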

\begin{proof}
Let $\Delta:=\Gamma^{-1}\circ\Omega$ and $\tilde\Delta:=(\ol\Gamma)^{-1}\circ\ol\Omega$. Then
\begin{align*}
\Delta&=[\gamma_k,\dots,\gamma_0]\circ[\omega_0,\dots,\omega_\ell]=[\gamma_k, \dots,\gamma_m,\omega_{m+1},\dots,\omega_\ell],\\
\tilde\Delta&=[\ol\gamma_k,\dots,\ol\gamma_0]\circ[\ol\omega_0,\dots,\ol\omega_\ell]=[\ol\gamma_k, \dots,\ol\gamma_m,\ol\omega_{m+1},\dots,\ol\omega_\ell].
\end{align*}
In particular note that $\Delta$, $\tilde \Delta$ each have $n:=k+\ell-2m+1$ vertices. If we denote these vertices by $\delta_i$, $\tilde\delta_i$, where $0\leq i\leq n-1$, then we have
$$
\delta_i=
\begin{cases}
\gamma_{k-i}, & i\leq k-m,\\
\omega_{i-k+2m}, & i\geq k-m;
\end{cases}
\quad\quad
\tilde\delta_i=
\begin{cases}
\ol\gamma_{k-i}, & i\leq k-m,\\
\ol\omega_{i-k+2m}, & i\geq k-m;
\end{cases}
$$
see Figure \ref{fig:benz}. 
In particular, $\gamma_m=\delta_{k-m}$. So we have to show that
$\tilde\Delta\equiv(\ol\Delta)_{\delta_{k-m}},$
 which means we need to  check:

 \begin{itemize}
\item[(i)]{$\|\delta_i-\delta_{i-1}\|=\|\tilde\delta_i-\tilde\delta_{i-1}\|,$} for $1\leq i\leq n-1$;
\item[(ii)]{${\theta_i}'[\tilde\Delta]={\theta_i}'[\Delta]$ for $1\leq i\leq k-m$, and $\theta_i[\tilde\Delta]=\theta_i[\Delta]$ for $k-m<i<n-1$}.
\end{itemize}
To establish  (i) note that, for $1\leq i\leq k-m$, 
$$
\|\delta_i-\delta_{i-1}\|=\|\gamma_{k-i}-\gamma_{k-i+1}\| =   \|\ol\gamma_{k-i}-\ol\gamma_{k-i+1}\| = \|\tilde\delta_i-\tilde\delta_{i-1}\|.
$$
Furthermore, for $k-m\leq i\leq n-1$,
$$
\|\delta_i-\delta_{i-1}\|= \|\omega_{i-k+2m}-\omega_{i-k+2m-1}\| = \|\ol\omega_{i-k+2m}-\ol\omega_{i-k+2m-1}\| = \|\tilde\delta_{i}-\tilde\delta_{i-1}\|.
$$
Next we check  (ii). For $1\leq i<k-m$,
$$
{\theta_i}'[\Delta]= {\theta_i}'[\Gamma^{-1}]=\theta_{k-i}[\Gamma]=\theta_{k-i}[\ol\Gamma]={\theta_i}'[(\ol\Gamma)^{-1}]={\theta_i}'[\tilde\Delta].
$$
Furthermore, for $k-m<i<n-1$,
$$
\theta_i[\Delta]=
\theta_{i-k+2m}[\Omega]=\theta_{i-k+2m}[\ol\Omega]=\theta_i[\tilde\Delta].
$$
It remains to check  that ${\theta'_{k-m}}[\Delta]={\theta'_{k-m}}[\tilde\Delta]$, and to this end it suffices to show:
\begin{equation}\label{eq:mixed1}
\theta_{k-m}'[\Delta]=\theta_m[\Gamma]-\theta_m[\Omega]\quad\text{and}\quad \theta_{k-m}'[\tilde\Delta]=\theta_m[\ol\Gamma]-\theta_m[\ol\Omega].
\end{equation}
To establish the first equation in \eqref{eq:mixed1} note that
\begin{align*}
\theta_m[\Omega]+\theta'_{k-m}[\Delta]&= \angle(\omega_{m-1},\omega_m,\omega_{m+1})+\angle(\delta_{k-m+1},\delta_{k-m},\delta_{k-m-1})\\
&=\angle(\gamma_{m-1},\gamma_m,\omega_{m+1})+\angle(\omega_{m+1},\gamma_m,\gamma_{m+1}).
\end{align*}
Further, since $\omega_{m+1}$ lies strictly on the left of $[\gamma_{m-1},\gamma_m,\gamma_{m+1}]$,  Lemma \ref{lem:abc}(ii) yields
$$
\angle(\gamma_{m-1},\gamma_m,\omega_{m+1})+\angle(\omega_{m+1},\gamma_m,\gamma_{m+1})=\angle(\gamma_{m-1},\gamma_m,\gamma_{m+1})=\theta_m[\Gamma].
$$
 The second equation in \eqref{eq:mixed1} follows from a  similar calculation, once we check that $\ol\omega_{m+1}$ lies strictly to the left of $[\ol\gamma_{m-1},\ol\gamma_m,\ol\gamma_{m+1}]$. Indeed, since $\omega_{m+1}$ lies strictly on left of $[\gamma_{m-1},\gamma_m,\gamma_{m+1}]$, Lemma \ref{lem:abc}(i) yields 
$$
\theta_m[\Omega]=\angle(\omega_{m-1},\omega_m,\omega_{m+1})=\angle(\gamma_{m-1},\gamma_{m},\omega_{m+1})<\angle(\gamma_{m-1},\gamma_{m},\gamma_{m+1})=\theta_m[\Gamma].
$$
So $\theta_m[\ol\Omega]=\theta_m[\Omega]< \theta_m[\Gamma]=\theta_m[\ol\Gamma]$. Consequently, 
$$
\angle(\ol\gamma_{m-1},\ol\gamma_{m},\ol\omega_{m+1})=\angle(\ol\omega_{m-1},\ol\omega_m,\ol\omega_{m+1})=\theta_m[\ol\Omega]<\theta_m[\ol\Gamma]=\angle(\ol\gamma_{m-1},\ol\gamma_{m},\ol\gamma_{m+1}).
$$
So, by Lemma \ref{lem:abc}(i),  $\ol\omega_{m+1}$ lies strictly to the left of $[\ol\gamma_{m-1},\ol\gamma_m,\ol\gamma_{m+1}]$ as claimed.
\end{proof}

\section{The Tracing Path   of a Cut Tree}\label{sec:trees}
Here we describe  precisely  how  a cut tree $T$ determines an unfolding of $P$.  Further we show that the boundary of this unfolding coincides with a development of a certain path $\Gamma_T$ which traces $T$. This leads to the main result of this section, Proposition \ref{prop:disk} below, which shows that an unfolding of $P$ generated by $T$ is simple if and only if the development of  $\Gamma_T$  is simple.
We start by recording some basic lemmas. In this section $T$ needs not be monotone.

Since leaves of $T$ are vertices of $P$, $T$ partitions each face  of $P$ into a finite number of polygons.
Let $F_T(P):=\{F_i\}$ be the disjoint union of these polygons, and $\pi\colon F_T(P)\to P$ be the projection  generated by the  inclusion maps $F_i\hookrightarrow P$. Glue each pair $F_i$, $F_j\in F_T(P)$ along a pair  $E_{in}$, $E_{jm}$ of their edges if and only if $\pi(E_{in})$, $\pi(E_{jm})\not\in T$ and $\pi(E_{in})=\pi(E_{jm})$. This yields a compact surface $P_T$ (which we may think of as having resulted from ``cutting" $P$ along $T$).
The inclusion maps $F_i\hookrightarrow P$ again define a natural projection $\pi\colon P_T\to P$, which is the identity map on  $\inte(P_T):=P_T-\partial P_T=P-T$. So, since $T$ is contractible,  $P_T$  is a topological disk. 
Also note that $P_T$ inherits an orientation from $P$, which in turn induces  a cyclical ordering $\tilde v_0,\dots, \tilde v_n$ on the vertices of $\partial P_T$ so that $P_T$ lies   to the left of $\partial P_T$, i.e., every $\tilde v_i$ has an open neighborhood $U_i$ in $P_T$ such that $\pi(U_i)$ lies to the left of $[\pi(\tilde v_{i-1}), \pi(\tilde v_{i}), \pi(\tilde v_{i+1})]$ in $P$.
Since $P_T$ contains no vertices in its interior, and all the interior angles of $\partial P_T$  are  less than $2\pi$,  it is locally isometric to the plane. Therefore, since $P_T$ is simply connected, it  may be isometrically immersed  in the plane, e.g.,
 see \cite[Lem. 2.2]{ghomi:verticesA}. An  \emph{immersion} is  a locally one-to-one continuous map, and is \emph{isometric} if it  preserves distances. So we have established:

\begin{lem}\label{lem:flattenning}
$P_T$ is simply connected and is locally isometric to $\R^2$. In particular, there exists an isometric immersion $P_T\to\R^2$.\qed
\end{lem}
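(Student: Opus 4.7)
The plan is to verify the three claims — simple connectedness, local flatness, and the existence of a global isometric immersion — in order, with the third following from the first two via the standard developing-map construction.

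For simple connectedness, I would argue topologically. Since $P$ is homeomorphic to $\S^2$ and $T\subset P$ is a tree, a regular neighborhood $N(T)$ of $T$ in $P$ is a topological closed disk, and so is the complement $P\setminus\inte(N(T))$. The construction of $P_T$ from the disjoint union $\{F_i\}$ of the polygons of $F_T(P)$ by regluing along edges whose $\pi$-images avoid $T$ is, up to homeomorphism, the same as retracting $N(T)$ to $\partial N(T)$ inside $P$. Thus $P_T$ is a closed topological disk, and in particular simply connected. The step I would watch most carefully here is that the combinatorial gluing rule really does produce a single disk rather than a surface with several boundary components or higher genus — this is exactly where the hypotheses that $T$ is a tree and contains every vertex of $P$ do their work.

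For local isometry, I would do a case analysis on $\pi(\tilde p)$ for $\tilde p\in P_T$. If $\pi(\tilde p)$ lies in the interior of a face, a planar disk on the face lifts isometrically through $\pi$ to a neighborhood of $\tilde p$. If $\pi(\tilde p)$ is an interior point of an edge of $P$ not in $T$, then both adjacent faces appear on the same sheet of $P_T$ and intrinsically unfold flat about the edge, again giving $\tilde p$ a Euclidean neighborhood. If $\tilde p\in\partial P_T$, then $\pi(\tilde p)\in T$ and the local model is a planar sector: at a non-vertex interior point of a $T$-edge, cutting splits a total angle of $2\pi$ into two pieces summing to $2\pi$, each strictly less than $2\pi$; at a vertex $v$ of $P$, the total angle $\angle_P(v)$ is already $<2\pi$ by convexity, and cutting further partitions $\angle_P(v)$ among several boundary copies of $v$, each receiving a sub-angle in $(0,2\pi)$. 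Hence every point of $P_T$ has a neighborhood isometric to an open set of $\R^2$ (a sector at boundary points, a disk at interior points).

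Given the first two claims, the isometric immersion $P_T\to\R^2$ follows from the developing-map construction cited as \cite[Lem.\ 2.2]{ghomi:verticesA}: fix a base point $\tilde p_0\in P_T$ together with an isometric embedding of a small neighborhood of $\tilde p_0$ into $\R^2$, and extend along paths using the planar charts supplied by the preceding paragraph. Simple connectedness of $P_T$ ensures path-independence of this analytic continuation, producing a well-defined globally isometric immersion. No new ingredient is needed beyond what has already been set up, so I would simply invoke this classical fact to conclude.
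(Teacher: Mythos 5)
Your proposal is correct and follows essentially the same route as the paper: the paper deduces that $P_T$ is a disk from the fact that $\pi$ is the identity on $\inte(P_T)=P-T$ together with the contractibility of the tree $T$, establishes local flatness by noting that $P_T$ has no cone points in its interior and all interior angles of $\partial P_T$ are less than $2\pi$ by convexity, and then invokes the same developing-map lemma for the immersion. Your version merely spells out these steps (regular neighborhood of $T$, case analysis at faces, uncut edges, cut edges, and vertices) in more detail.
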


Any such immersion will be called an \emph{unfolding} of $P$ (generated by $T$) 
provided that it is \emph{orientation preserving}, i.e., $\ol P_T$ lies locally on the left  of $\overline{\partial P}_T$, with respect to the orientation that $\overline{\partial P}_T$ inherits from $\partial P_T$. Recall that for any set $X\subset P_T$, we let $\ol X$ denote the image of $X$ under the unfolding $P_T\to\R^2$, and say  $\ol X$ is simple provided that $X\to\R^2$ is one-to-one. 

\begin{lem}\label{lem:disk1}
$\ol P_T$ is simple if and only if $\overline{\partial P}_T$ is simple. 
\end{lem}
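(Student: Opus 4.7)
The forward implication is immediate, since the restriction of an injective map is injective. For the converse, let $f\colon P_T\to\R^2$ denote the unfolding furnished by Lemma~\ref{lem:flattenning}, and assume $\ol{\partial P}_T=f(\partial P_T)$ is simple; then it is a Jordan curve $J$, and by the Jordan curve theorem $J$ bounds a topological disk $D$ in the plane, with $\R^2\setminus J=D\sqcup U$ where $U$ is the unbounded component. My strategy is to show that $f$ restricts to a homeomorphism $\inte P_T\to D$, from which injectivity of $f$ on all of $P_T$ follows, since $f(\partial P_T)=J$ is disjoint from $D$.

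The heart of the argument is a degree count. Since $f$ is a local isometry by Lemma~\ref{lem:flattenning} and $P_T$ is compact, $f$ is a proper local homeomorphism; hence for each $p\notin J$, the preimage $f^{-1}(p)$ is a finite subset of $\inte P_T$, and the counting function $N(p):=|f^{-1}(p)|$ is locally constant on $\R^2\setminus J$. Because $f$ is orientation preserving and $f|_{\partial P_T}$ is an orientation-preserving homeomorphism of $\partial P_T$ onto $J$ (oriented as the boundary of $D$, since $P_T$ lies locally to the left of $\partial P_T$), the algebraic count of preimages of $p$ coincides with the winding number of $J$ around $p$. Each preimage contributes $+1$, so $N$ equals this winding number: $N\equiv 1$ on $D$ and $N\equiv 0$ on $U$. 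In particular $f(P_T)\subset\ol D$, and $f$ puts $f^{-1}(D)$ in bijection with $D$.

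The last step is to confirm $f^{-1}(J)=\partial P_T$, so that $f^{-1}(D)=\inte P_T$ and injectivity transfers to the full map. If some $x\in\inte P_T$ mapped to a point of $J$, a small neighborhood of $x$ would map homeomorphically onto a planar disk meeting $U$, contradicting $f(P_T)\subset\ol D$. Hence $f|_{\inte P_T}\colon\inte P_T\to D$ is a bijection, and combined with $f|_{\partial P_T}$ being injective with image disjoint from $D$, $f$ is injective on $P_T$. An alternative finish, avoiding the explicit winding-number computation, is to observe that $f|_{\inte P_T}\to D$ is a proper local homeomorphism onto the simply connected $D$, hence a trivial covering, and therefore a homeomorphism because $\inte P_T$ is connected. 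The main technical obstacle, I expect, is cleanly installing the orientation conventions so that the degree identity is unambiguous; once that is done, the argument is essentially forced.
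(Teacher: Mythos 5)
Your proof is correct, but it takes a genuinely different route from the paper's. The paper disposes of this lemma in one line by citing \cite{ghomi:topology} for the general fact that a compact immersed surface in $\R^2$ is embedded if and only if each of its boundary components is embedded; no argument appears in the body of the text. Your degree count is a legitimate self-contained substitute for the disk case: properness of $f$ over $\R^2\setminus J$ makes $N$ locally constant, boundedness of the image forces $N\equiv 0$ on the unbounded component, and orientation-preservation together with $N\ge 0$ pins the winding number (hence $N$) to $1$ on $D$; the final step identifying $f^{-1}(J)=\partial P_T$ via local injectivity is right. Note that your ``alternative finish'' is essentially the argument the paper does spell out later, in Lemma \ref{lem:preX} of Appendix A: there the containment $\ol M\subset \ol D$ is obtained by a maximum argument plus invariance of domain (after a Schoenflies normalization) rather than by a degree count, and the conclusion follows because a proper local homeomorphism onto the simply connected $D$ is a trivial covering. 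What each approach buys: the citation gives brevity and the stronger statement for arbitrary compact surfaces with several boundary components; your argument is elementary and self-contained, at the cost of some orientation bookkeeping. On that last point, be a little careful: ``$\ol P_T$ lies locally to the left of $J$'' does not by itself identify which complementary component of $J$ is the bounded one, so the assertion that $J$ is positively oriented as $\partial D$ is not purely local --- but it is rescued exactly as your argument implicitly does, since the winding number of a Jordan curve about its bounded component is $\pm 1$ and $N\ge 0$ rules out $-1$. This is a wrinkle to smooth, not a gap.
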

\begin{proof}
This is a special case of the following general fact, see \cite{ghomi:topology}:
if $M$ is a connected compact surface with boundary components $\partial M_i$, and $M\to\R^2$ is an immersion,
then $\ol M$ is simple if and only if each $\overline{\partial M}_i$ is simple.
\end{proof}

So, as far as \Durers problem is concerned, we just need to decide when $\overline{\partial P}_T$ is simple. To this end it would be useful to think of 
$\overline{\partial P}_T$ not as the restriction of the unfolding of $P_T$ to $\partial P_T$, but rather as the development of a path  of $P$. This path is given by
$$
\Gamma_T=[v_0,\dots,v_n]:=[\pi(\tilde v_0),\dots,\pi(\tilde v_n)],
$$
 where $\tilde v_0,\dots, \tilde v_n$ is the cyclical ordering of the vertices of $\partial P_T$ mentioned above. Thus $\Gamma_T$  traces $\pi(\partial P_T)=T$, and $\pi$ establishes a bijection $v_i\leftrightarrow\tilde v_i$ between the vertices of $\Gamma_T$ and $\partial P_T$. For each $\tilde v_i$  let $\tilde{st}_{\tilde v_i}$ denote the star of $\tilde v_i$ in $P_T$. Then,
since $P_T$ lies to the left of $\partial P_T$, it follows that 
\begin{lem}\label{lem:stv}
For every vertex $v_i$ of $\Gamma_T$, the left side of $[v_{i-1},v_i,v_{i+1}]$ in $P$ coincides with $\pi(\tilde{st}_{\tilde v_i})$.\qed
\end{lem}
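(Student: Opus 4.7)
The plan is to unravel the definition of $\pi(\tilde{st}_{\tilde v_i})$ and identify it as a sector of $\st_{v_i}$ bounded by the rays $R_{v_{i-1}}$ and $R_{v_{i+1}}$, then use the orientation convention that $P_T$ lies to the left of $\partial P_T$ to pin down which sector this is.

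First I would analyze the local structure at $\tilde v_i$. Since $\tilde{st}_{\tilde v_i}$ is the union of the faces of $P_T$ (i.e., the elements of $F_T(P)$) containing $\tilde v_i$, and since $P_T$ was obtained by gluing these pieces along all edges not in $T$, the part of $\partial \tilde{st}_{\tilde v_i}$ that actually passes through $\tilde v_i$ consists of exactly two edges of $\partial P_T$, and these are the ones adjacent to $\tilde v_i$ in the cyclic ordering, namely $\tilde v_{i-1}\tilde v_i$ and $\tilde v_i\tilde v_{i+1}$. Projecting by $\pi$, it follows that $\pi(\tilde{st}_{\tilde v_i})$ is a closed, connected region of $\st_{v_i}$ whose boundary inside $\st_{v_i}$, as seen from $v_i$, is generated exactly by the two rays $R_{v_{i-1}}$ and $R_{v_{i+1}}$.

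Now I would split into cases according to the definition of ``side'' in Section~\ref{subsec:sidesAndangles}. If $\widehat v_{i-1}=\widehat v_{i+1}$, which occurs precisely when $v_i$ is a leaf of $T$ and $\Gamma_T$ doubles back along its single incident $T$-edge, then cutting along that edge does not disconnect $\st_{v_i}$, so $\pi(\tilde{st}_{\tilde v_i})=\st_{v_i}$, which by definition is the left side of $[v_{i-1},v_i,v_{i+1}]$. If instead $\widehat v_{i-1}\neq \widehat v_{i+1}$, then $R_{v_{i-1}}\cup R_{v_{i+1}}$ divides $\st_{v_i}$ into two components, and by the previous paragraph $\pi(\tilde{st}_{\tilde v_i})$ equals the closure of one of them. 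To select the correct one, I invoke the orientation hypothesis preceding Lemma~\ref{lem:flattenning}: there exists an open neighborhood $U_i$ of $\tilde v_i$ in $P_T$, which we may take to be contained in $\tilde{st}_{\tilde v_i}$, such that $\pi(U_i)$ lies to the left of $[v_{i-1},v_i,v_{i+1}]$. Hence $\pi(\tilde{st}_{\tilde v_i})$ meets the open left side, and being one of the two closed sectors must be the left one.

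The main obstacle is the very first step: verifying that the two edges of $\partial P_T$ that actually touch $\tilde v_i$ are precisely $\tilde v_{i-1}\tilde v_i$ and $\tilde v_i\tilde v_{i+1}$, with no other edges of $T$ creating additional interior ``slits'' in $\pi(\tilde{st}_{\tilde v_i})$. This is really a bookkeeping claim about the gluing used to construct $P_T$: because every non-$T$ edge of an $F_j\in F_T(P)$ gets glued, each vertex of $\partial P_T$ has exactly two $\partial P_T$-edges incident to it, and these must be the ones adjacent in the cyclic ordering of $\partial P_T$. Once that combinatorial fact is in hand, the rest of the argument is immediate from the definitions.
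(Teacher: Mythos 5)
Your argument is correct and is essentially the paper's own: the paper offers no separate proof of this lemma, asserting it as an immediate consequence of the orientation convention that $P_T$ lies to the left of $\partial P_T$, which is exactly the mechanism you use to select the correct sector. Your additional bookkeeping — that the two edges of $\partial P_T$ incident to $\tilde v_i$ are precisely $\tilde v_{i-1}\tilde v_i$ and $\tilde v_i\tilde v_{i+1}$, so that $\pi(\tilde{\st}_{\tilde v_i})$ is a sector of $\st_{v_i}$ bounded by $R_{v_{i-1}}\cup R_{v_{i+1}}$ — is a faithful and somewhat more explicit rendering of the same reasoning.
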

\noindent In particular, the left angles of $\Gamma_T$ in $P$ are the same as the interior angles of $P_T$. So, since the unfolding $P_T\to\R^2$ is orientation preserving, it follows that the left angles of
 $\overline{\partial P}_T$ are the same as those of  a development $\ol\Gamma_T$ of $\Gamma_T$. 
 Thus  $\overline{\partial P}_T$ is congruent to $\ol\Gamma_T$, and Lemma \ref{lem:disk1} yields:

 \begin{prop}\label{prop:disk}
$\overline P_T$ is simple if and only if  $\ol\Gamma_T$ is simple.\qed
 \end{prop}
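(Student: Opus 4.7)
The plan is to reduce to Lemma \ref{lem:disk1} and then identify $\overline{\partial P}_T$ with $\overline{\Gamma}_T$ as planar paths. By Lemma \ref{lem:disk1}, $\overline{P}_T$ is simple if and only if $\overline{\partial P}_T$ is simple, so it suffices to show that, after appropriately matching initial conditions, $\overline{\partial P}_T\equiv\overline{\Gamma}_T$. Since a development is uniquely determined by its initial condition $(\ol\gamma_0,\ol u_0)\in\R^2\times\S^1$, this reduces to verifying that $\overline{\partial P}_T$ and $\overline{\Gamma}_T$ have the same sequence of edge lengths and left angles.

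For the edge lengths, each edge of $\partial P_T$ is contained in a single face $F_i\in F_T(P)$, and $\pi$ restricts to an isometry on each $F_i$; hence the edges of $\Gamma_T=[\pi(\tilde v_0),\dots,\pi(\tilde v_n)]$ have the same lengths as the corresponding edges of $\partial P_T$. Since $P_T\to\R^2$ is an isometric immersion, the edges of $\overline{\partial P}_T$ also inherit these lengths, so they agree with those of $\Gamma_T$, and hence of $\overline{\Gamma}_T$.

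For the left angles, Lemma \ref{lem:stv} tells us that the left side of $[v_{i-1},v_i,v_{i+1}]$ in $P$ equals $\pi(\widetilde{\st}_{\tilde v_i})$, so the left angle $\theta_i$ of $\Gamma_T$ at $v_i$ coincides with the interior angle of $P_T$ at $\tilde v_i$. On the image side, the orientation-preserving clause in the definition of an unfolding guarantees that $\overline{P}_T$ lies locally to the left of $\overline{\partial P}_T$ near each $\overline{\tilde v}_i$; combined with the fact that an isometric immersion preserves local angles, this gives that the left angle of $\overline{\partial P}_T$ at $\overline{\tilde v}_i$ is again the interior angle of $P_T$ at $\tilde v_i$. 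The two paths $\overline{\partial P}_T$ and $\overline{\Gamma}_T$ therefore share both their edge lengths and their left angles, so they are congruent once the initial conditions are aligned, completing the equivalence.

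The only genuine subtlety is Step 2: one must confirm that the orientation-preserving property of the unfolding really does identify the \emph{left} side of $\overline{\partial P}_T$ at each boundary vertex with the image of $\widetilde{\st}_{\tilde v_i}$, rather than its complement in a neighborhood. This however is exactly what the definition of \emph{orientation preserving} was designed to capture, so once that definition is unpacked the rest of the argument is formal and requires no further work.
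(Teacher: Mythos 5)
Your proposal is correct and follows essentially the same route as the paper: reduce via Lemma \ref{lem:disk1} to the simplicity of $\overline{\partial P}_T$, then identify $\overline{\partial P}_T$ with $\ol\Gamma_T$ by matching edge lengths and left angles, the latter via Lemma \ref{lem:stv} and the orientation-preserving property of the unfolding. The paper's own argument is exactly this congruence $\overline{\partial P}_T\equiv\ol\Gamma_T$ established in the paragraph preceding the proposition.
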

 
\pagebreak 
\section{Criteria for Embeddedness of Immersed Disks}\label{sec:immersion}

As we discussed in the last section, an immersed disk in the plane is simple (or embedded) if and only if its boundary is simple. Here we generalize that observation. Let $D\subset\R^2$ be the unit disk centered at the origin, with oriented boundary $\partial D$.   For   $p$, $q\in\partial D$,   let $pq\subset \partial D$ denote the segment with initial point $p$ and final point $q$. 
Recall that an \emph{immersion}  is   a continuous locally one-to-one map. 
Further  recall that for any $X\subset D$, and mapping $f\colon D\to\R^2$, we set $\ol X:=f(X)$, and say $\ol X$ is simple if $f$ is one-to-one on $X$.
\begin{figure}[h] 
   \centering
   \begin{overpic}[height=0.75in]{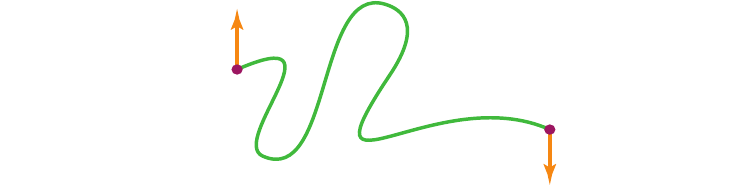} 
       \end{overpic}
   \caption{}
   \label{fig:weakmonotone}
\end{figure}
A simple curve segment in $\R^2$,  whose end points do not have the same height, is \emph{weakly monotone} (with respect to the direction $(0,1)$) if it may be extended  to an unbounded simple curve by attaching a vertical ray to its top end point which extends upward, and a vertical ray to its bottom end point which extends downward, see Figure \ref{fig:weakmonotone}.
The main result of this section is that an immersed disk is embedded whenever its boundary admits a decomposition into weakly monotone paths:

\begin{prop}\label{cor:disk}
Let $D\overset{f}{\to}\R^2$ be an immersion with polygonal boundary. Suppose there is a pair of points $p_0$, $p_1$ in $\partial D$ such that $\overline{p_0p_1}$ and $\overline{p_1p_0}$ are  weakly monotone. Then $\ol D$ is simple.
\end{prop}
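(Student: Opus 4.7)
The plan is to apply Lemma~\ref{lem:disk1} to reduce the problem to showing that the closed curve $\overline{\partial D}$ is simple. Since $\partial D$ is connected, this reduction is immediate; and since each of the arcs $\overline{p_0p_1}$ and $\overline{p_1p_0}$ is already simple (weak monotonicity implies simplicity of the arc), the task becomes to show that these two arcs meet only at $\ol p_0$ and $\ol p_1$.

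Assume without loss of generality that $h(\ol p_0)>h(\ol p_1)$, and let $U$ and $L$ denote the upward vertical ray from $\ol p_0$ and the downward vertical ray from $\ol p_1$, respectively. By the definition of weak monotonicity, each of the extensions $\tilde\gamma_1:=U\cup\overline{p_0p_1}\cup L$ and $\tilde\gamma_2:=U\cup\overline{p_1p_0}\cup L$ is a simple unbounded curve, and so by the Jordan curve theorem separates $\R^2$ into two connected components.

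The first step would be the auxiliary observation $\ol D\cap U=\{\ol p_0\}$ and $\ol D\cap L=\{\ol p_1\}$. Since $f$ is an immersion, its restriction to $\inte(D)$ is an open map (by invariance of domain), so the topological boundary of $\ol D$ in $\R^2$ is contained in $\overline{\partial D}$. Weak monotonicity of both arcs makes $U\setminus\{\ol p_0\}$ disjoint from $\overline{\partial D}$, hence $(U\setminus\{\ol p_0\})\cap\ol D$ is both open and closed in the connected set $U\setminus\{\ol p_0\}$; since $\ol D$ is bounded and $U$ is not, this intersection must be empty. The argument for $L$ is symmetric, and in particular $\ol D\cap\tilde\gamma_1=\overline{p_0p_1}$.

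The main step is then to show that the interior of $\overline{p_1p_0}$ lies entirely in one of the two components of $\R^2\setminus\tilde\gamma_1$; granting this, the interior of $\overline{p_1p_0}$ is disjoint from $\tilde\gamma_1$ and therefore in particular from $\overline{p_0p_1}$, yielding the required simplicity of $\overline{\partial D}$. My approach would be to first identify, at each of $\ol p_0$ and $\ol p_1$, the side of $\tilde\gamma_1$ into which $\overline{p_1p_0}$ emerges: locally at $\ol p_0$ the image $\ol D$ occupies the sector bounded by the departing tangent of $\overline{p_0p_1}$ and the arriving tangent of $\overline{p_1p_0}$ and \emph{not} containing $U$ (by the preliminary step), and orientation-preservation of $f$ determines on which side of $\tilde\gamma_1$ this sector sits; the analysis at $\ol p_1$ is symmetric. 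I would then combine this ``same side at both endpoints'' information with the non-negativity of the winding number $W(\overline{\partial D},\cdot)$ (which equals the number of $f$-preimages, since $f$ is an orientation-preserving immersion) to rule out any crossing of $\tilde\gamma_1$ by the interior of $\overline{p_1p_0}$. The principal difficulty is this last, global step: an immersion, as opposed to an embedding, could in principle fold $D$ over $\tilde\gamma_1$, and precluding this appears to require a careful interplay between orientation-preservation of $f$ and the ``rays outside'' observation at $\ol p_0$ and $\ol p_1$.
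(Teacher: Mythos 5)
Your reduction to showing that $\overline{\partial D}$ is simple, and your preliminary observation that $\ol D$ meets the attached vertical rays only at $\ol p_0$ and $\ol p_1$ (via openness of $f(\inte D)$), are both sound. But the proof does not close: the entire difficulty of the proposition is concentrated in your ``main step,'' and there you only name tools rather than give an argument. Knowing that $\overline{p_1p_0}$ emerges into the same component of $\R^2\setminus\tilde\gamma_1$ at both of its endpoints does not prevent its interior from crossing $\tilde\gamma_1$ an even number of times, and the identity $W(\overline{\partial D},x)=\#f^{-1}(x)$ by itself does not forbid self-intersections of the boundary of an immersed disk (immersed disks with badly self-intersecting boundaries exist in abundance; it is precisely the weak monotonicity that must be exploited, and you have not said how). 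You acknowledge this yourself in the final sentence, so as written the proposal is a plan with its central step left open.

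The paper closes this step by a different device, which you may find worth comparing against your outline. Rather than analyzing $\overline{\partial D}$ directly, it enlarges the immersed disk: take a large circle $C$ enclosing $\ol D$, let the vertical rays from $\ol p_0$ and $\ol p_1$ meet $C$ at $x_0$, $x_1$, and observe that each weakly monotone arc together with the two ray segments is a simple curve cutting the disk bounded by $C$ into two pieces. Choosing the piece on the right of each such curve and gluing the two pieces along the ray segments produces an annulus $A$ lying locally on the right of $\overline{\partial D}$; gluing $A$ to $\ol D$ along $\overline{\partial D}$ yields an immersion of a larger disk $D'$ with $\partial D'=C$ simple. A covering-space argument (Lemma \ref{lem:preX}: an immersed compact surface whose boundary image lies on a simple closed curve is embedded) then shows $D'$, hence $\ol D$, is simple. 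If you want to salvage your direct approach, you would need to supply the global argument that rules out an even number of crossings --- which in effect amounts to reproving something like Lemma \ref{lem:preX} in disguise.
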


The basic strategy for proving the above proposition is to extend $f$ to an immersion of a larger disk which has simple boundary and thus is one-to-one. To this end first note that by \emph{polygonal boundary} here we mean that there are points $v_i$, $i\in \Z_k$, cyclically arranged along $\partial D$ so that $f$ maps each oriented segment $v_iv_{i+1}$ to a line segment. Then we obtain a closed polygonal path $[\ol v_0,\dots, \ol v_{k}]$ in $\R^2$. Since $f$ is locally one-to-one, each $v_i$ has a neighborhood $U_i$ in $D$ such that $\ol U_i$ lies one side of $[\ol v_{i-1},\ol v_{i}, \ol v_{i+1}]$ as defined in Section \ref{subsec:sidesAndangles}, and it is easy to see that this side must be the same for all $i$. Thus we may say that $\ol D$ \emph{lies locally on one side} of $\overline{\partial D}$.

\begin{proof}[Proof of Proposition \ref{cor:disk}]
As discussed above,  $\ol D$ lies locally one one side of the path $\overline{\partial D}$. We may assume that this is the left hand side after composing $f$ with a reflection of $\R^2$.
Since $\overline{p_0p_1}$, $\overline{p_1p_0}$ are  weakly monotone, they are simple and their end points have different heights. 
Suppose that $\ol p_0$ is the end point with the lower height, and let $R_0$ be the vertical ray which emanates from $\ol p_0$ and extends downward. Similarly, let $R_1$ be the vertical ray which emanates from $\ol p_1$ and extends upward. Let $C$ be a circle so large which contains $\ol D$ in the interior of the region it bounds. Then $R_0$, $R_1$ intersect $C$ at precisely one point each, say at $x_0$ and $x_1$ respectively, see Figure \ref{fig:circles}.
\begin{figure}[h] 
   \centering
   \begin{overpic}[height=1.25in]{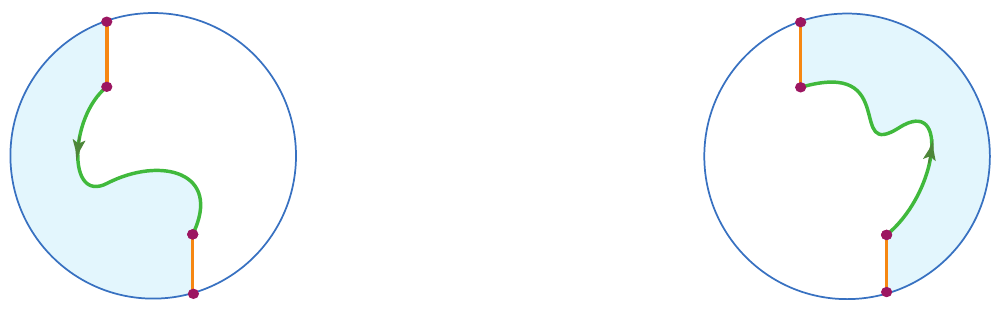} 
   \put(18,-1){\small $x_0$}
   \put(20.5,7){\small $\ol p_0$}
   \put(12,22){\small $\ol p_1$}
   \put(10,31){\small $x_1$}
   \put(7,8){\small $D_1$}
   \put(88,-1){\small $x_0$}
   \put(84.5,7){\small $\ol p_0$}
   \put(76,22){\small $\ol p_1$}
   \put(79,31){\small $x_1$}
   \put(88,23){\small $D_0$}
         \end{overpic}
   \caption{}
   \label{fig:circles}
\end{figure}
 Now consider the oriented composite path $x_0x_1:=x_0\ol p_0\cup \overline{p_0p_1}\cup \ol{p_1} x_1$ shown on the right diagram in Figure \ref{fig:circles}. Since $\overline{p_0p_1}$ is weakly monotone, $x_0 x_1$ is simple, and thus it divides the region 
bounded by $C$ into a pair of disks. Let $D_0$ be the disk which lies to the right of $x_0x_1$. Similarly, let $x_1 x_0:=x_1\ol p_1\cup \overline{p_1p_0}\cup \ol{p_0} x_0$, and $D_1$ be the disk which lies on the right of the oriented path $x_1 x_0$, as shown in the left diagram in Figure \ref{fig:circles}.
Now glueing $D_0$ and $D_1$ along the segments $x_0\ol{p_0}$ and $x_1\ol{p_1}$ yields an immersed annulus $A$. Note that by construction $A$ lies locally on the right of $\overline{\partial D}$. Thus, gluing $A$ to $\overline{D}$ along $\overline{\partial D}$ yields an immersed disk, say $D'$. Note that $\partial D'=C$ which is simple. Thus it follows (via \cite[Lem. 1.1]{ghomi:topology}, or Lemma \ref{lem:preX}) that $D'$ and consequently $\ol D$ is simple as claimed.
\end{proof}

The criteria proved above were the precise conditions we need in the proof of Theorem \ref{thm:main}. See Appendix A for more general criteria concerned with embeddedness of immersed disks.

\section{Structure of Monotone Cut Trees}\label{sec:monotonetrees}
 Here we describe how the leaves of $T$ inherit a cyclical ordering from $P$, which in turn orders the branches of $T$. This  will  be used to define a sequence of paths $\Gamma_i$ in $T$, together with a class of related paths $\Gamma_i'$. The paths $\Gamma_i$ join the top and bottom vertices of $P$, while $\Gamma_i'$ are closed paths which  correspond to the boundary of certain disks $D_i\subset P_T$.

\subsection{Leaves $\ell_i$ and junctures $j_i$}
Let $\Gamma:=\Gamma_T$ be the  path tracing $T$ defined in Section \ref{sec:trees}. 
Note that  each edge $E$ of $T$ appears precisely twice in $\Gamma$,  because there are precisely two faces  $F_1$, $F_2$ of $P_T$ such that $\pi(F_1)$ and $\pi(F_2)$ are adjacent to $E$.
This quickly yields:

\begin{lem}\label{lem:degree}
Let $v$ be a vertex of $T$ which has degree $n$ in $T$. Then there are precisely $n$ vertices of $\Gamma$ which coincide with $v$.\qed
\end{lem}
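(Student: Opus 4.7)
The plan is to prove the lemma by a double counting of edge-endpoints of $\Gamma$ at vertices equal to $v$, using the fact (established in the paragraph immediately preceding the lemma) that each edge of $T$ appears precisely twice in $\Gamma$.

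First I would record the structural input that the proof hinges on. By Lemma \ref{lem:flattenning}, $P_T$ is a topological disk, so $\partial P_T$ is a simple closed polygonal curve. Consequently each of its vertices $\tilde v_i$ is incident to exactly two edges of $\partial P_T$. Passing through the projection $\pi$, each vertex of $\Gamma$ is likewise incident to exactly two edges of $\Gamma$ (i.e., each $v_i$ sits between $v_{i-1}$ and $v_{i+1}$ in the cyclic listing). Thus if $k$ denotes the number of indices $i$ with $v_i = v$, the number of (vertex, incident edge) incidences of $\Gamma$ at which the vertex equals $v$ is exactly $2k$.

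Next I would count that same quantity from the edges' side. Every edge of $\Gamma$ projects (equals) to an edge of $T$, and since $T$ is a tree, no edge of $T$ is a loop; hence if an edge $E$ of $\Gamma$ is incident to a vertex equal to $v$, then $E$ corresponds to an edge of $T$ incident to $v$, and contributes exactly one endpoint at $v$ (not two). By the already observed fact, each of the $n$ edges $E_1,\dots,E_n$ of $T$ incident to $v$ is realized exactly twice in $\Gamma$, so it supplies exactly two endpoints at vertices of $\Gamma$ equal to $v$. Summing, the total number of such (vertex, incident edge) incidences is $2n$.

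Equating the two counts gives $2k = 2n$, so $k = n$, which is the claim. The whole argument is essentially a bookkeeping step, and the only place one must be slightly careful is in justifying that vertices of $\Gamma$ have valence $2$ in $\Gamma$; this is where the topological-disk property of $P_T$ and the absence of loops in the tree $T$ are both genuinely used. I do not expect any substantive obstacle beyond making these incidence counts precise.
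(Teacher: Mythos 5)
Your double-counting argument is correct and is exactly the argument the paper intends: the lemma is stated with a \qed as an immediate consequence of the preceding observation that each edge of $T$ appears precisely twice in $\Gamma$, and your incidence count (each of the $n$ edges at $v$ contributes two endpoints at $v$, while each occurrence of $v$ in the closed path $\Gamma$ absorbs exactly two such endpoints, so $2k=2n$) is the natural way to make that "quickly yields" precise. No gaps.
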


In particular each leaf of $T$ occurs only once in $\Gamma$. Consequently,
$\Gamma$ determines a unique ordering 
 $\ell_0$, $\ell_1,\dots, \ell_{k-1}$ of the leaves of $T$. Further we set $\ell_{i+k}:=\ell_i$, and designate $\ell_0$ (the top vertex of $P$) as the initial vertex of $\Gamma$. 
  Recall that a vertex of  $\Gamma$ is simple if its adjacent vertices are distinct.

\begin{lem}\label{lem:mi0}
Any vertex of $\Gamma$ which is not a leaf or root of $T$ is simple.
\end{lem}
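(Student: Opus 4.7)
Since every vertex of $\Gamma$ is, by construction, a vertex of $T$, and a vertex of $T$ which is neither a leaf nor the root must have degree at least $2$ in $T$, I would start by fixing $v=v_i$ and setting $n:=\deg_T(v)\geq 2$. The goal is to show that the adjacent vertices $v_{i-1}$ and $v_{i+1}$ of $v_i$ in $\Gamma$ are distinct, i.e., that the path $\Gamma$ does not simply double back on itself at $v_i$.

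The central observation is geometric, extracted from the construction of $P_T$ in Section \ref{sec:trees}. The $n$ edges of $T$ issuing from $v$ emanate in $n$ distinct directions within $\st_v\subset P$, and therefore split $\st_v$ into $n$ closed sectors, cyclically ordered around $v$ and each bounded by a consecutive pair of edges of $T$. When $P$ is cut along $T$, these sectors lift to the stars of the $n$ copies of $v$ on $\partial P_T$ (this is essentially the content of Lemma \ref{lem:degree}). Applying this at the particular copy $\tilde v_i$ and invoking Lemma \ref{lem:stv}, the left side of $[v_{i-1},v_i,v_{i+1}]$ in $P$ is identified with one such sector $\pi(\tilde{\st}_{\tilde v_i})$ of $\st_v$, and its bounding edges are precisely the two edges $v_{i-1}v_i$ and $v_iv_{i+1}$ of $\Gamma$, regarded as edges of $T$ at $v$.

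Since $n\geq 2$, this sector is nondegenerate, so the two bounding edges are two \emph{distinct} edges of $T$ at $v$. Because $T$ is a tree and hence has no multiple edges, these two distinct edges at $v$ must terminate in two distinct neighbors of $v$ in $T$; but these neighbors are exactly $v_{i-1}$ and $v_{i+1}$. Therefore $v_{i-1}\neq v_{i+1}$, which is the definition of $v_i$ being simple.

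I do not anticipate any serious obstacle: the only point that requires a moment of care is the correct bookkeeping of the cutting procedure at $v$, namely that the two edges of $\partial P_T$ incident to $\tilde v_i$ really do project to two distinct edges of $T$ at $v$, rather than to two portions of the same edge (as happens precisely at leaves and at a root of degree $1$, which are the cases excluded by hypothesis).
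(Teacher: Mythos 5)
Your proof is correct and rests on the same key ingredient as the paper's, namely the identification (Lemma \ref{lem:stv}) of the left side of $[v_{i-1},v_i,v_{i+1}]$ with $\pi(\tilde{\st}_{\tilde v_i})$ and hence with one of the regions into which the edges of $T$ divide $\st_{v}$. The paper merely runs the argument in the contrapositive direction (non-simple forces this region to be all of $\st_{v}$, and then a small metric circle about $\tilde v_i$ shows $\deg_T(v)=1$), whereas you argue directly that $\deg_T(v)\ge 2$ makes the region a proper sector bounded by two distinct edges of $T$, hence with distinct endpoints $v_{i-1}\neq v_{i+1}$; both are sound.
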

\begin{proof}
Let $v_i$ be a vertex of $\Gamma$ which is not simple. We will show that the degree of $v_i$ in $T$ is $1$, which is all we need.
Since $v_i$ is non-simple,  the left side of $[v_{i-1},v_i,v_{i+1}]$ is the entire star $\st_{v_i}$ by definition. Thus, by Lemma \ref{lem:stv},  $\pi(\tilde\st_{\tilde v_i})=\st_{v_i}$. Choose $r>0$ so small that the metric ``circle" $C\subset P_T$ of radius $r$ centered at
$\tilde v_i$ lies in the interior of  $\tilde\st_{\tilde v_i}$. Then $\pi(C)\subset\inte(\st_{v_i})$ is a simple closed curve enclosing $v_i$ which intersects $T$ only once.
 Thus $\deg_T(v_i)=1$  as claimed.
\end{proof}

 Using the last lemma, we now show that the leaves of $T$ may be characterized via the height function $h$ as follows:

\begin{lem}\label{lem:PT}
A vertex of $\Gamma$ is a local maximum point of  $h$ on the sequence of vertices of $\Gamma$ if and only if it is a leaf of $T$. 
\end{lem}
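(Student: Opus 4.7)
My plan is to prove the two implications separately, both relying on the rooted-tree structure forced by monotonicity. The key preliminary observation is that \emph{every edge of $T$ has endpoints of distinct heights}: an edge $e$ disconnects $T$ into two pieces, and the piece not containing $r$ has some leaf whose branch to $r$ must cross $e$, so monotonicity along that branch forces a strict height drop across $e$. Calling the endpoint of $e$ with smaller $h$ the \emph{parent}, every non-root vertex $v$ of $T$ has exactly one parent neighbor (lying on the path in $T$ from $v$ to $r$), and all its other $T$-neighbors are \emph{children}, with strictly greater height than $v$.

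For the forward direction, let $v_i=\ell_j$ be a leaf of $T$. Then $\ell_j$ has a single $T$-neighbor $w$, and Lemma \ref{lem:degree} says $\ell_j$ appears in $\Gamma$ exactly once. Since the edges of $\Gamma=\Gamma_T$ are edges of $T$ --- the walk $\Gamma$ projects $\partial P_T$, which consists of two copies of each edge of $T$ --- both edges of $\Gamma$ incident to $v_i$ must be the two copies of $\ell_j w$, forcing $v_{i-1}=v_{i+1}=w$. By the preliminary observation $h(w)<h(\ell_j)$, so $v_i$ is a strict local maximum at position $i$.

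For the converse, suppose $v_i$ is not a leaf. If $v_i=r$, then $r$ is the unique bottom vertex of $P$, so every $T$-neighbor of $r$ has strictly greater height; as $v_{i\pm1}$ are such $T$-neighbors, $v_i=r$ fails to be a local maximum. Otherwise $v_i$ is an interior vertex of $T$ (neither leaf nor root), and Lemma \ref{lem:mi0} asserts that $v_i$ is a simple vertex of $\Gamma$, i.e.\ $v_{i-1}\neq v_{i+1}$. These two distinct $T$-neighbors of $v_i$ cannot both coincide with the unique parent of $v_i$, so at least one of them is a child, with strictly greater height than $v_i$, and a local maximum at position $i$ is ruled out.

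The only delicate point --- and the step I expect to require the most care --- is that the converse must be verified at every occurrence of a non-leaf vertex in $\Gamma$: by Lemma \ref{lem:degree}, an interior vertex of $T$ of degree $n$ appears $n$ times in $\Gamma$, and the pigeonhole argument above requires $v_{i-1}\neq v_{i+1}$ at each of those occurrences. Lemma \ref{lem:mi0} supplies exactly this uniform simplicity, so once it is invoked the argument goes through without further work.
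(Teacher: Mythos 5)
Your proof is correct and follows essentially the same route as the paper: the forward direction is identical, and your converse is just the contrapositive of the paper's argument, with the ``unique parent'' observation playing the role of the paper's appeal to uniqueness of simple (monotone) paths from a vertex to the root, and Lemma \ref{lem:mi0} invoked in the same way.
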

\begin{proof}
Suppose that $v$ is a leaf of $T$, and let $u$, $w$ be its adjacent vertices in $\Gamma$. Since $T$ is monotone, and $v\neq r$, there exists a vertex $v'$ of $T$ which is adjacent to $v$ and lies below it. Since $\deg_T(v)=1$,  $u=v'=w$. In particular, $u$, $w$ lie below $v$. So $v$ is a local maximizer of $h$. 
Conversely, suppose that $v$ is a local maximizer of $h$. Then $u$, $w$  lie below $v$, because $T$ does not have horizontal edges. Let  $\Omega_u$, $\Omega_w$ be the simple monotone paths in $T$ which connect $u$, $w$  to $r$ respectively. Then $vu\bullet\Omega_u$ and $vw\bullet\Omega_w$ are  simple, since they are monotone.  Hence $u=w$, by the uniqueness of simple paths in $T$. So $v$ is not  simple and therefore must be either a leaf or the root of $T$, by Lemma \ref{lem:mi0}. The latter is impossible, since $v$ is a local maximizer of $h$. 
\end{proof}

  \begin{figure}[h] 
   \centering
   \begin{overpic}[height=1.5in]{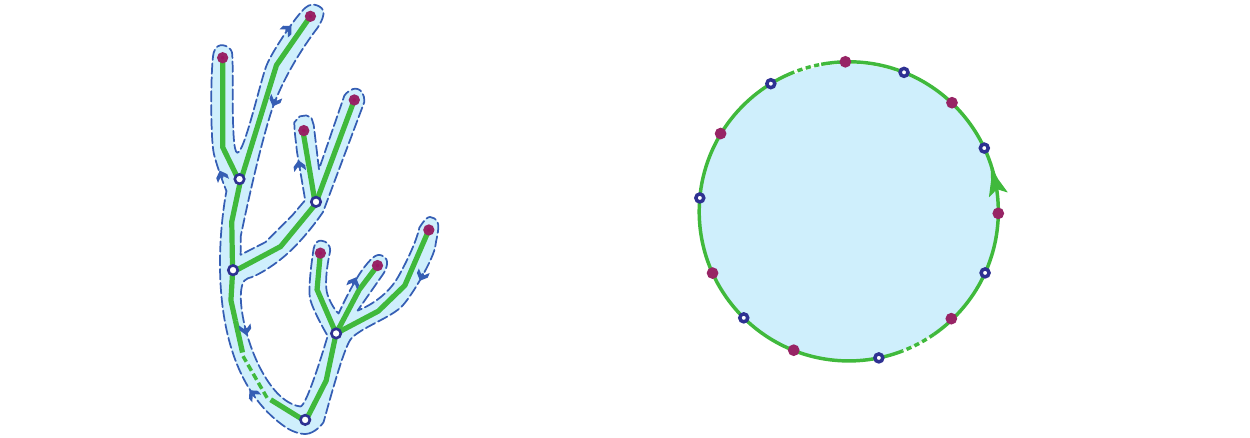} 
      \put(26.5,33.5){$\ell_0$}  
      \put(23.5,27){$\ell_1$}
      \put(29,16){$\ell_i$}
      \put(32,19){$\ell_{i+1}$}
      \put(14,13){$j_0$}
      \put(66,17){$P_T$}
      \put(81,17){$\tilde\ell_0$} 
      \put(80,23){$\tilde j_0$} 
      \put(77,27){$\tilde\ell_1$} 
       \put(54,12){$\tilde\ell_i$}
         \put(56,7){$\tilde j_i$}
          \put(28,5){$j_i$}
      \put(60,4){$\tilde\ell_{i+1}$}
      \put(16,1){$\Gamma$}       
   \end{overpic}
   \caption{}
   \label{fig:cut}
\end{figure}

 It follows from Lemma \ref{lem:PT} that between every pair of consecutive leaves $\ell_i$, $\ell_{i+1}$ of $\Gamma$ there exists a unique vertex $j_i$, called a \emph{juncture}, which is a local minimizer of $h$, see Figure \ref{fig:cut}. 
Note  that some  junctures  of $\Gamma$ may coincide with each other, or with the root $r$ of $T$. For any ordered pair $(v,w)$ of vertices of $T$ let $(vw)_T$ be the (unique) simple path in $T$ joining $v$ to $w$. Note that the paths $\ell_j j_i$ and $j_i\ell_{i+1}$ of $\Gamma$ are monotone and therefore simple. Thus
\begin{equation}\label{eq:ellem}
 (\ell_i j_i)_\Gamma=(\ell_i j_i)_T, \quad\text{and}\quad  (j_i\ell_{i+1})_\Gamma=(j_i\ell_{i+1})_T.
\end{equation}

\subsection{Branches $\beta_i$ and the paths $\Gamma_i$}
For $0\leq i\leq k-1$, we define the \emph{branches}  of $T$ as the paths
$
\beta_i:=(\ell_i r)_T,
$
which connect each leaf of $T$ to its root. 
Note that, by  \eqref{eq:ellem}, we have
\begin{equation}\label{eq:bi}
\beta_i=(\ell_i r)_T=(\ell_i j_i)_T\bullet (j_i r)_T=(\ell_i j_i)_\Gamma\bullet (j_i r)_T.
\end{equation}
Having ordered the branches of $T$, we  now describe the first class of paths which are useful for our study of monotone trees: 
\begin{equation}\label{eq:gammai}
\Gamma_i:=(\ell_0\ell_i)_\Gamma\bullet \beta_i=(\ell_0j_i)_\Gamma\bullet (j_ir)_T,
\end{equation}
for $0\leq i\leq k-1$.
 See Figure \ref{fig:subtreesIII} for some examples. Next we record how the composition of these paths is related to the branches of $T$.
\begin{figure}[h] 
   \centering
   \begin{overpic}[height=1.5in]{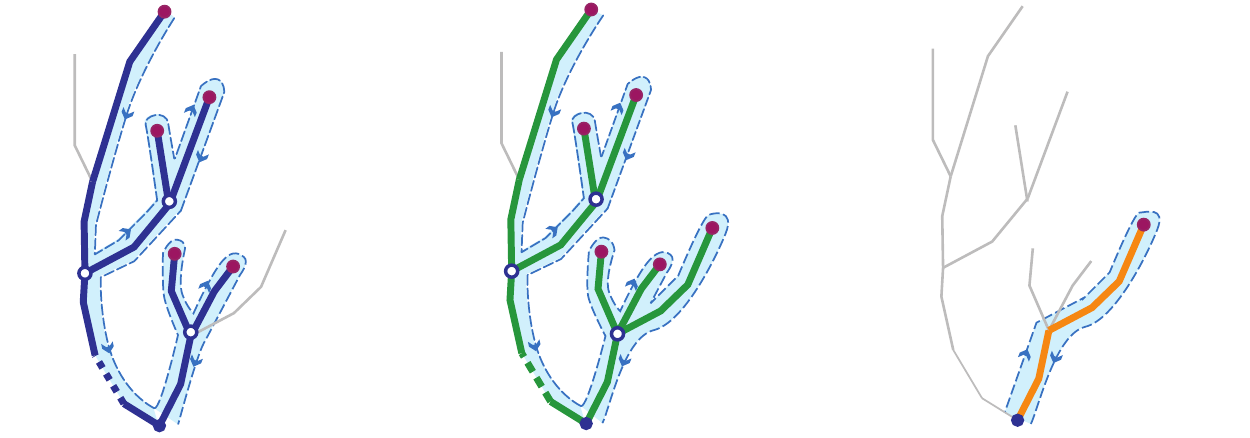} 
    \put(5,0){$\Gamma_i$}
     \put(37,0){$\Gamma_{i+1}$}
     \put(65,0){$\Gamma_{i}^{-1}\circ\Gamma_{i+1}$}
      \put(18,16){$\ell_i$}
       \put(56,19){$\ell_{i+1}$}
        \put(91,19){$\ell_{i+1}$}
   \end{overpic}
   \caption{}
   \label{fig:subtreesIII}
\end{figure}

\begin{lem}\label{lem:branches}
For $0\leq i\leq k-2$, 
$
\Gamma_i^{-1}\circ\Gamma_{i+1}=\beta_{i+1}^{-1}\bullet \beta_{i+1}.
$
\end{lem}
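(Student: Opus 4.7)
The plan is to decompose both sides explicitly using the juncture $j_i$ as a pivot, carry out the cancellation in the composition $\circ$, and then recognize the result using the fact that $j_i$ must lie on the branch $\beta_{i+1}$.

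\textbf{Step 1: Juncture-adapted decompositions.} Because the leaves are ordered along $\Gamma$ with $\ell_i$ preceding $\ell_{i+1}$, we have $(\ell_0\ell_{i+1})_\Gamma = (\ell_0\ell_i)_\Gamma \bullet (\ell_i\ell_{i+1})_\Gamma$, and since $j_i$ is the juncture between them, $(\ell_i\ell_{i+1})_\Gamma = (\ell_i j_i)_\Gamma \bullet (j_i\ell_{i+1})_\Gamma$. Applying \eqref{eq:ellem} and \eqref{eq:bi}, together with the definition \eqref{eq:gammai}, I would write
$$
\Gamma_{i+1} = (\ell_0\ell_i)_\Gamma \bullet (\ell_i j_i)_T \bullet (j_i \ell_{i+1})_T \bullet \beta_{i+1}, \qquad \Gamma_i^{-1} = (r j_i)_T \bullet (j_i \ell_i)_T \bullet (\ell_0\ell_i)_\Gamma^{-1}.
$$

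\textbf{Step 2: Track the cancellation at $\ell_0$.} To compute $\Gamma_i^{-1}\circ\Gamma_{i+1}$, I would compare the tail of $\Gamma_i^{-1}$ (read backwards from $\ell_0$) with the head of $\Gamma_{i+1}$ (read forwards from $\ell_0$). Both begin by tracing $(\ell_0\ell_i)_\Gamma$, and then $(\ell_i j_i)_T$, so the vertex sequences agree all the way to $j_i$. Beyond $j_i$, the tail of $\Gamma_i^{-1}$ enters $(rj_i)_T$, proceeding to a neighbor of $j_i$ strictly below $j_i$ in height, while the head of $\Gamma_{i+1}$ enters $(j_i\ell_{i+1})_T$, proceeding to a neighbor of $j_i$ strictly above it (since $(j_i\ell_{i+1})_T$ is monotone). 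These next vertices are therefore distinct, so the maximal backtracking terminates exactly at $j_i$, and
$$
\Gamma_i^{-1}\circ\Gamma_{i+1} = (r j_i)_T \bullet (j_i \ell_{i+1})_T \bullet \beta_{i+1}.
$$

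\textbf{Step 3: Show $j_i$ lies on $\beta_{i+1}$.} The decisive observation is that in a monotone tree rooted at $r$, every non-root vertex has a unique adjacent vertex strictly below itself, namely its parent; this follows because any non-parent neighbor lies on the path to some leaf whose descent to $r$ must be monotone, forcing it to be higher. Consequently, from any vertex there is a unique monotone-descending path to $r$. Since $(\ell_{i+1}j_i)_T$ is monotone decreasing (the reverse of the monotone $(j_i\ell_{i+1})_T$), it must be an initial segment of the unique monotone descent $\beta_{i+1}=(\ell_{i+1}r)_T$. Hence $\beta_{i+1} = (\ell_{i+1}j_i)_T \bullet (j_i r)_T$, which gives $\beta_{i+1}^{-1} = (rj_i)_T \bullet (j_i\ell_{i+1})_T$. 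Substituting into the expression from Step 2 yields $\Gamma_i^{-1}\circ\Gamma_{i+1} = \beta_{i+1}^{-1}\bullet\beta_{i+1}$, as desired.

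\textbf{Expected obstacle.} The delicate point is Step 2: justifying that the composition's cancellation halts \emph{exactly} at $j_i$ rather than continuing further or stopping earlier. This requires simultaneously invoking the definition of a juncture as a local minimum of $h$, the identity \eqref{eq:ellem} that identifies subpaths of $\Gamma$ with subpaths of $T$ between leaves and junctures, and the monotonicity of $T$. Once the uniqueness of the descending direction at each vertex is established (the key structural fact in Step 3), both the cancellation claim and the identification of $\beta_{i+1}$ fall out together.
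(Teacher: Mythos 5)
Your proof is correct and follows essentially the same route as the paper's: the same decompositions via \eqref{eq:ellem}, \eqref{eq:bi}, and \eqref{eq:gammai}, cancellation of the composition terminating exactly at $j_i$, and the identification $(rj_i)_T\bullet(j_i\ell_{i+1})_T=(r\ell_{i+1})_T=\beta_{i+1}^{-1}$. You in fact supply the two justifications the paper leaves implicit --- why the backtracking halts at $j_i$ (the next vertices lie on opposite sides of the height $h(j_i)$; in the degenerate case $j_i=r$ the path $\Gamma_i$ simply runs out and the formula still holds) and why $j_i$ lies on $\beta_{i+1}$ (uniqueness of the descending neighbor in a monotone tree).
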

\begin{proof}
By \eqref{eq:ellem}, \eqref{eq:bi} and \eqref{eq:gammai}
\begin{align*}
\Gamma_i^{-1}\circ\Gamma_{i+1}
&=\big((\ell_0 j_i)_\Gamma\bullet (j_i r)_T\big)^{-1}\circ \big((\ell_0\ell_{i+1})_\Gamma\bullet \beta_{i+1}\big)\\
&= (rj_i)_T\bullet (j_i\ell_0)_{\Gamma^{-1}}\circ (\ell_0j_i)_\Gamma\bullet(j_i\ell_{i+1})_\Gamma\bullet \beta_{i+1}\\
&= (rj_i)_T\bullet (j_i\ell_{i+1})_T\bullet \beta_{i+1}\\
&= (r\ell_{i+1})_T\bullet \beta_{i+1}\\
&= \beta_{i+1}^{-1}\bullet \beta_{i+1}.
\qedhere
\end{align*}
\end{proof}

The following observation shows, via Lemma \ref{lem:abc}(i), that $\Gamma_{i+1}$ lies to left of $\Gamma_i$ near  $j_i$, if $j_i$ is an interior vertex of $\Gamma_i$.

\begin{lem}\label{lem:mi}
If $j_i\neq r$, then
$
\theta_{j_i}[\Gamma_{i+1}]<\theta_{j_i}[\Gamma_{i}],
$
for $0\leq i\leq k-2$.
\end{lem}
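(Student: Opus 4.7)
The plan is to reduce the angle inequality to a combinatorial statement about three edges of $T$ meeting at $j_i$, and then read it off from the sector structure of $\st_{j_i}$ induced by the cut tree.

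Let $a$ denote the vertex of $\Gamma$ preceding this occurrence of $j_i$, let $b$ denote the one following it, and let $c$ denote the vertex of $T$ adjacent to $j_i$ along $(j_i r)_T$. From the decompositions $\Gamma_i = (\ell_0 j_i)_\Gamma \bullet (j_i r)_T$ and $\Gamma_{i+1} = (\ell_0 j_{i+1})_\Gamma \bullet (j_{i+1} r)_T$, I read off that
\[
\theta_{j_i}[\Gamma_i] = \angle(a, j_i, c) \quad\text{and}\quad \theta_{j_i}[\Gamma_{i+1}] = \angle(a, j_i, b).
\]
Since $j_i$ is a local minimum of $h$ on $\Gamma$, both $a$ and $b$ lie above $j_i$; since $(j_i r)_T$ is contained in the monotone branch $\beta_i$ and $j_i \neq r$, the vertex $c$ lies below $j_i$. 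Thus $a, b, c$ are mutually distinct, with $a \neq b$ following from Lemma \ref{lem:mi0} because $j_i$, being neither a leaf nor the root of $T$, must be a simple vertex of $\Gamma$. So $j_i a$, $j_i b$, $j_i c$ are three distinct edges of $T$ at $j_i$, determining three distinct rays $R_a$, $R_b$, $R_c$ emanating from $j_i$.

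By Lemma \ref{lem:abc}(i), the desired inequality is equivalent to the assertion that $b$ lies strictly to the left of $[a, j_i, c]$. The key step is to first locate $c$ relative to $[a, j_i, b]$: by Lemma \ref{lem:stv}, the left side of $[a, j_i, b]$ in $P$ coincides with $\pi(\tilde{\st}_{\tilde j_i})$, the sector of $\st_{j_i}$ associated to the copy $\tilde j_i$ of $j_i$ in $P_T$ corresponding to the given occurrence. This sector is bounded by $R_a$ and $R_b$ and contains no other edge of $T$ at $j_i$ in its interior, because cutting $P$ along $T$ separates the copies of $j_i$ precisely along the $T$-edges at $j_i$. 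Therefore $c$, being a distinct edge, lies strictly to the right of $[a, j_i, b]$.

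Transferring this to the circle $\partial \widehat{\st}_{j_i}$, the point $\hat c$ lies in the interior of the oriented arc $\hat a \hat b$, so traversing the orientation direction from $\hat c$ brings us to $\hat b$ before returning to $\hat a$. Hence $\hat b$ lies in the interior of the arc $\hat c \hat a$, which is by definition the left side of $[a, j_i, c]$, and another application of Lemma \ref{lem:abc}(i) yields the desired inequality. The main technical subtlety I anticipate is justifying that no other $T$-edge at $j_i$ intrudes into the sector identified via Lemma \ref{lem:stv}; this should follow directly from the construction of $P_T$, in which each copy of $j_i$ corresponds to one of the sectors of $\st_{j_i}$ bounded by two consecutive $T$-edges at $j_i$.
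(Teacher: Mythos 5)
Your proposal is correct and follows essentially the same route as the paper: both arguments reduce to the fact (via Lemma \ref{lem:stv}) that the left side of $[a,j_i,b]$ equals $\pi(\tilde{\st}_{\tilde j_i})$, whose interior is disjoint from $T$, so the edge of $(j_ir)_T$ at $j_i$ must lie strictly on the right, with the height comparisons ruling out the boundary rays. The paper phrases this as a proof by contradiction while you argue the contrapositive directly, but the ingredients and logic are the same.
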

\begin{proof}
Let $v$, $w$ be the vertices of $\Gamma_{i+1}$ which  precede and succeed $j_i$ respectively. By Lemma \ref{lem:mi0}, $v\neq w$. 
Next let $u$ denote the vertex of $\Gamma_i$ which succeeds $j_i$, see Figure \ref{fig:mi}. 
We need to show that
 $
 \angle(v,j_i,w)<\angle(v,j_i,u).
 $ 
Suppose, towards a contradiction, that $\angle(v,j_i,w)\geq\angle(v,j_i,u)$. The equality in the last inequality cannot occur, because by 
\begin{figure}[h] 
   \centering
   \begin{overpic}[height=0.9in]{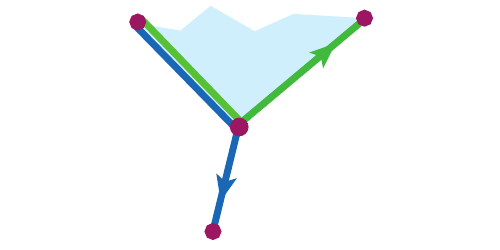} 
   \put(35,3){$u$}
    \put(20,43){$v$}
     \put(76,43){$w$}
      \put(38,22){$j_i$}
       \put(47,35){S}
       \put(63,29){$\Gamma_{i+1}$}
       \put(48,9){$\Gamma_{i}$}
   \end{overpic}
   \caption{}
   \label{fig:mi}
\end{figure}
definitions of $\Gamma_i$ and $\Gamma_{i+1}$, $u$ lies below $j_i$ while $w$ lies above it (so $u\neq w$). Thus we may assume that $\angle(v,j_i,w)>\angle(v,j_i,u)$.
Then, by Lemma \ref{lem:abc}(i), $u$ lies strictly in the left side $S$ of $[v,j_i,w]$. Consequently  $uj_i$   intersects the interior  of $S$, which means $\inte(S)\cap T\neq\emptyset$. But this is  impossible because
$S=\pi(\widetilde{st}_{\tilde j_i})$ by Lemma \ref{lem:stv} which yields
\begin{equation}\label{eq:intS}
\inte(S)=\inte\big(\pi(\widetilde{st}_{\tilde j_i})\big)=\pi\big(\inte(\widetilde{st}_{\tilde j_i})\big)\subset \pi\big(\inte(P_T)\big)= P-T.
\qedhere
\end{equation}
\end{proof}

Now we are ready to prove the main result of this subsection:

\begin{prop}\label{cor:branches}
For $0\leq i\leq k-2$, 
$
(\overline{\Gamma_i})^{-1}\circ\ol\Gamma_{i+1}\equiv(\overline{\beta_{i+1}^{-1}\bullet \beta_{i+1}})_{j_i}.
$
\end{prop}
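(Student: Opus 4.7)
The plan is to derive this as a direct application of Proposition \ref{prop:mixed}, taking $\Gamma = \Gamma_i$ and $\Omega = \Gamma_{i+1}$, together with the combinatorial identity already established in Lemma \ref{lem:branches}. First I would identify the longest common initial subpath of $\Gamma_i$ and $\Gamma_{i+1}$. By the definition \eqref{eq:gammai}, both paths begin by following $\Gamma$ from $\ell_0$; the path $\Gamma_i=(\ell_0 j_i)_\Gamma\bullet (j_ir)_T$ branches off along $(j_ir)_T$ at $j_i$, while $\Gamma_{i+1}=(\ell_0 j_i)_\Gamma\bullet(j_i\ell_{i+1})_\Gamma\bullet(\ell_{i+1}j_{i+1})_\Gamma\bullet(j_{i+1}r)_T$ continues along $\Gamma$ past $j_i$ toward $\ell_{i+1}$. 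Thus the common initial subpath is $(\ell_0 j_i)_\Gamma$, and the index $m$ of Proposition \ref{prop:mixed} is the one for which $\gamma_m=\omega_m=j_i$.

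Next I would verify the hypothesis of Proposition \ref{prop:mixed}, splitting into two cases according to whether $j_i=r$. If $j_i=r$, then $(j_ir)_T$ is trivial and $j_i$ is the final vertex of $\Gamma_i$, so $m=k$ and the first alternative of the hypothesis holds automatically. If $j_i\neq r$, Lemma \ref{lem:mi} supplies $\theta_{j_i}[\Gamma_{i+1}]<\theta_{j_i}[\Gamma_i]$, and this inequality translates, via Lemma \ref{lem:abc}(i), into the statement that $\omega_{m+1}$ (the vertex of $\Gamma_{i+1}$ immediately after $j_i$) lies strictly to the left of $[\gamma_{m-1},\gamma_m,\gamma_{m+1}]$, as required.

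Having verified the hypothesis, I would fix the developments $\overline{\Gamma_i}$ and $\overline{\Gamma_{i+1}}$ to share the same initial condition — both start at the image of $\ell_0$ in the direction of the first edge of $(\ell_0 j_i)_\Gamma$ — so that Proposition \ref{prop:mixed} applies. It yields
$$
(\overline{\Gamma_i})^{-1}\circ\overline{\Gamma_{i+1}}\equiv\overline{(\Gamma_i^{-1}\circ\Gamma_{i+1})}_{j_i}.
$$
Finally, substituting $\Gamma_i^{-1}\circ\Gamma_{i+1}=\beta_{i+1}^{-1}\bullet\beta_{i+1}$ from Lemma \ref{lem:branches} produces the desired congruence.

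There is no serious obstacle here: the proposition essentially assembles three earlier pieces (Lemma \ref{lem:branches}, Lemma \ref{lem:mi}, Proposition \ref{prop:mixed}). The only point requiring care is checking that $\Gamma_{i+1}$ genuinely departs from $\Gamma_i$ on the strictly left side at $j_i$ when $j_i\neq r$, since the ``mixed development'' formula is designed precisely for that configuration — and this is exactly the content of Lemma \ref{lem:mi}.
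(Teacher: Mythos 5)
Your proposal is correct and follows essentially the same route as the paper's proof: combine Lemma \ref{lem:branches} with Proposition \ref{prop:mixed}, splitting into the cases $j_i=r$ (where $m=k$) and $j_i\neq r$ (where Lemma \ref{lem:mi} together with Lemma \ref{lem:abc}(i) supplies the ``strictly to the left'' hypothesis). The only difference is that you spell out the identification of the common initial subpath $(\ell_0 j_i)_\Gamma$ in slightly more detail, which is a harmless elaboration.
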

\begin{proof}
By Proposition \ref{lem:branches}, $\beta_{i+1}^{-1}\bullet \beta_{i+1}=\Gamma_i^{-1}\circ\Gamma_{i+1}$. So we just need to check that 
$
(\overline{\Gamma_i^{-1}\circ\Gamma_{i+1}})_{j_i}\equiv (\overline{\Gamma_i})^{-1}\circ\ol\Gamma_{i+1},
$
which follows from Proposition  \ref{prop:mixed} via Lemma \ref{lem:mi}. More specifically, there are two cases to consider. If $j_i=r$, then 
$\Gamma_i$ is a subpath of $\Gamma_{i+1}$, which corresponds to the case ``$m=k$" in Proposition \ref{prop:mixed}. If $j_i\neq r$, then Lemma \ref{lem:mi} together with Lemma \ref{lem:abc}(i) ensure that $\Gamma_{i+1}$ lies to the left of $\Gamma_i$ near $j_i$, and  so the hypothesis of Proposition \ref{prop:mixed} is again satisfied.
\end{proof}

\subsection{Dual branches $\beta_i'$ and the paths $\Gamma_i'$} 
To describe the second class of paths which we may associate to a monotone tree,  we first establish the existence of a collection of paths $\beta_i'$ which are in a sense dual to the branches $\beta_i$ defined  above.

\begin{prop}\label{prop:dualbranch}
Each leaf $\ell_i$ of a monotone cut tree $T$ may be connected to the top leaf $\ell_0$ of $T$ via a monotone path $\beta_i'$ in $P$, which intersects  $T$ only at its end points. 
\end{prop}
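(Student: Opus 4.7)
The plan is to construct $\beta_i'$ by an iterative height-increasing walk from $\ell_i$ to $\ell_0$ through $P$, staying in $P-T$ except at the endpoints, and to verify the construction by lifting to the cut surface $P_T$. The two structural ingredients I would exploit are that $\deg_T(\ell_i)=1$, with the unique $T$-edge at $\ell_i$ pointing strictly downward (by monotonicity of $T$), and that $\ell_0$ is the unique $h$-maximum on $P$ (by convexity, which ensures every upper cap $P\cap\{h\geq c\}$ is a topological disk, so $h$ has no local maxima on $P$ other than $\ell_0$).

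First I would leave $\ell_i$ into $P-T$: the descending $T$-direction at $\ell_i$ occupies a single point of $\partial\widehat{\st}_{\ell_i}$, while an open arc of $\partial\widehat{\st}_{\ell_i}$ consists of $h$-ascending directions (since $\ell_i$ is not a local $h$-maximum on $P$, by the convexity argument above). Picking an ascending direction distinct from the $T$-direction yields a short initial edge of $\beta_i'$ on which $h$ strictly increases and which lies in $(P-T)\cup\{\ell_i\}$. By the symmetric argument at $\ell_0$ (using $\deg_T(\ell_0)=1$ and monotonicity), a final descending segment into $\ell_0$ from $P-T$ is likewise available.

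To bridge the initial and final segments, I would climb in $P_T$ via piecewise-linear ascent of $h\circ\pi$. Recall that by Lemma \ref{lem:degree}, $\ell_0$ and $\ell_i$ each lift to a unique vertex $\tilde\ell_0$, $\tilde\ell_i$ of $\partial P_T$; furthermore $h\circ\pi$ is linear on each face of $P_T$ and attains its global maximum uniquely at $\tilde\ell_0$ (because $\pi$ surjects onto $P$ and $\ell_0$ is the unique $h$-max of $P$). Starting just inside $\inte(P_T)$ above $\tilde\ell_i$, a strictly ascending piecewise-linear path in $\inte(P_T)$ must therefore, after finitely many ascending face-crossings, terminate at $\tilde\ell_0$. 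Projecting the resulting path by $\pi$ and concatenating with the initial and final segments yields the required $\beta_i'$.

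The main obstacle will be ensuring that the ascending path inside $\inte(P_T)$ can always be chosen to avoid $\partial P_T$ except at $\tilde\ell_0$, and does not stall at some other boundary configuration. I would handle this by noting that along each edge of $\partial P_T$ the function $h\circ\pi$ is strictly monotone (as the edges of $T$ are, by monotonicity), so any interior ascending segment that threatens to meet an interior point of $\partial P_T$ can be perturbed tangentially into $\inte(P_T)$ without sacrificing strict $h$-increase. Combined with the absence of interior local maxima of $h\circ\pi$ on $P_T$ (any such would contradict the unique-cap property of the convex body bounded by $P$), this forces the ascent to terminate only at $\tilde\ell_0$, completing the construction.
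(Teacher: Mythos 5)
Your overall strategy---lift to $P_T$, run a strictly ascending walk from $\tilde\ell_i$ to $\tilde\ell_0$, and keep it off $\partial P_T$ except at the endpoints---is the same as the paper's, but the step you yourself flag as ``the main obstacle'' is exactly where the paper's proof does all of its work, and your proposed fix does not close it. The assertion that a strictly ascending piecewise-linear path in $\inte(P_T)$ ``must terminate at $\tilde\ell_0$'' does not follow from uniqueness of the global maximum of $h\circ\pi$ together with the absence of \emph{interior} local maxima: $\inte(P_T)$ contains no vertices at all and no face is horizontal, so that absence is automatic and carries no information. The danger sits at the \emph{vertices} of $\partial P_T$, i.e.\ the lifts $\tilde v$ of vertices $v$ of $T$. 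Your tangential perturbation handles only interior points of edges of $\partial P_T$; when the ascent, sliding upward alongside such an edge, arrives beneath the upper endpoint $\tilde v$ of that edge, continuing requires (a) knowing that the sector $\tilde{\st}_{\tilde v}\subset P_T$ contains points strictly above $v$, so that $\tilde v$ is not a \emph{boundary} local maximum of $h\circ\pi$, and (b) producing a monotone detour around $\tilde v$ inside $\inte(P_T)$ joining a point just below $v$ to a point just above it. Neither is addressed. Point (a) is where the monotone-tree structure genuinely enters (at a non-leaf, non-root vertex every sector is bounded by an ascending child edge; at a leaf $\ell_j\neq\ell_0$ one uses convexity of $P$; lifts of junctures are local minima of $h\circ\pi$ by Lemma \ref{lem:mi2} and hence are never the top of an ascending edge), and point (b) is precisely the face-by-face construction in Part II of the paper's proof.

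A secondary issue is termination: even granting that the walk can always be locally continued, you must show it reaches $\tilde\ell_0$ after finitely many steps rather than accumulating at some intermediate height; ``finitely many ascending face-crossings'' is not automatic, since a strictly ascending path may re-enter a face it has already visited. The paper sidesteps both problems by first (Part I) building the path combinatorially through vertices of $P$---from a leaf, step up along an edge of $P$ not in $T$; from a non-leaf vertex, which by Lemma \ref{lem:mi2} cannot be a juncture, follow the monotone arc of $\Gamma$ upward to the next leaf---so that each step strictly increases the height within the finite vertex set, and only afterwards (Part II) perturbs the result off $\partial P_T$. To repair your argument you would essentially have to reproduce both parts.
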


Assume for now that the above proposition holds. Then
 for each leaf $\ell_i$, we fix a path 
$\beta_i'$ given by this  proposition and set
\begin{equation}\label{eq:defGamPrime}
\Gamma_i':=
\begin{cases}
(\ell_0\ell_i)_\Gamma\bullet \beta_i', & 1\leq i\leq k-1;\\
\Gamma, & i=k.
\end{cases}
\end{equation}
Note that since the interior of $\beta_i'$ lies in $P-T$, it lifts to a unique path $\tilde\beta_i'$ in $P_T$,  see Figure \ref{fig:pathII}, such that $\pi(\tilde\beta_i')=\beta_i'$. Consequently each $\Gamma_i'$ corresponds to a simple closed curve $\tilde\Gamma_i'$ in $P_T$ where 
$\tilde\Gamma_i':=(\tilde\ell_0\tilde\ell_i)_{\partial P_T}\bullet \tilde\beta_i'$, for $1\leq i\leq k-1$, and $\tilde\Gamma_k':=\partial P_T$. Let $D_i\subset P_T$ be the disk bounded by $\tilde\Gamma_i'$ which lies to the left of it, and note that 
\begin{figure}[h] 
   \centering
   \begin{overpic}[height=1.4in]{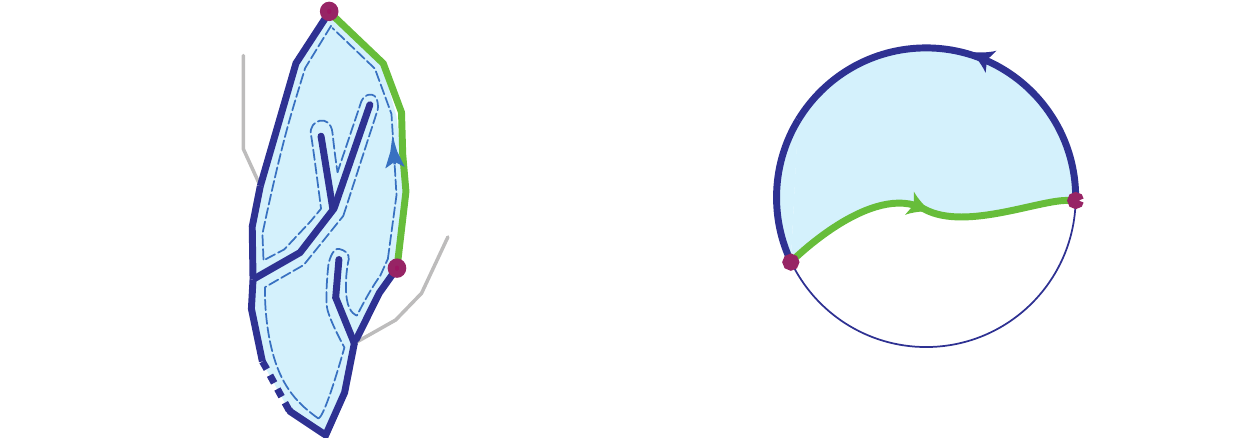} 
   \put(18,1){$\Gamma_i'$}
     \put(22,34){$\ell_o$}
      \put(32.5,15.5){$\ell_i$}
       \put(71.5,23){$D_i$}
       \put(33,25){$\beta_i'$}
        \put(69,14){$\tilde\beta_i'$}
        \put(65,6){$P_T$}
         \put(59.5,12){$\tilde\ell_i$}
          \put(87,18){$\tilde\ell_0$}
           \put(58,23){$\tilde\Gamma_i'$}
   \end{overpic}
   \caption{}\label{fig:pathII}
\end{figure}
$D_k=P_T$. 
The unfolding $P_T\to\ol P_T\subset \R^2$ induces unfoldings $D_i\to\ol D_i\subset \R^2$.
Thus, as was the case for $\partial P_T$  discussed in Section \ref{sec:trees}, there are two congruent ways to map each boundary curve $\partial D_i$ to $\R^2$:  one via the development of $\pi\circ\tilde\Gamma_i'=\Gamma_i'$  and the other via the restriction of the unfolding $\ol D_i$ to $\partial D_i$. So we may record:

\begin{lem}\label{lem:gamma'}
For $1\leq i\leq k$,
the mappings $\partial D_i\to\R^2$ generated by $\overline{\Gamma'}_{i}$ and $\overline{\partial D_i}$ coincide, up to a rigid motion. In particular, 
$\overline{\Gamma'}_{i}$ bounds an immersed disk.\qed
\end{lem}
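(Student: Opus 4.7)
The plan is to mimic the argument already used in Section \ref{sec:trees} that led to Proposition \ref{prop:disk}, but applied to the sub-disks $D_i$ of $P_T$ rather than to all of $P_T$. The essential point is that $\tilde\Gamma_i'$ is a simple closed polygonal curve in $P_T$, that $D_i$ is by definition the side of it that lies on its left, and that restriction of an unfolding is again an orientation-preserving isometric immersion.

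First I would note that, by construction, $\tilde\Gamma_i'$ is a simple closed path in $P_T$: for $1\le i\le k-1$ this is because $(\tilde\ell_0\tilde\ell_i)_{\partial P_T}$ is a simple arc in $\partial P_T$ and $\tilde\beta_i'$ is a lift to $\inte(P_T)$ of the path $\beta_i'$ whose interior is disjoint from $T$ (so the two arcs meet only at the endpoints $\tilde\ell_0$, $\tilde\ell_i$); and for $i=k$ it is $\partial P_T$ itself. Hence $D_i$ is a topological disk bounded by $\tilde\Gamma_i'$, and $D_i$ lies locally on the left of $\tilde\Gamma_i'$.

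Next I would observe that the restriction of the unfolding $P_T\to\R^2$ to $D_i$ is still an orientation-preserving isometric immersion. In particular it preserves edge lengths along $\partial D_i$, and at each interior vertex $\tilde v$ of $\tilde\Gamma_i'$ the interior angle of $D_i$ at $\tilde v$ equals the interior angle of $\ol D_i$ at the image of $\tilde v$. Under the projection $\pi\colon P_T\to P$, the vertices and edge lengths of $\tilde\Gamma_i'$ match those of $\Gamma_i'=\pi\circ\tilde\Gamma_i'$, and the interior angle of $D_i$ at $\tilde v$ equals the left angle of $\Gamma_i'$ at $\pi(\tilde v)$, by the same reasoning behind Lemma \ref{lem:stv} (the star of $\tilde v$ in $D_i$ projects into the left side of the corresponding corner of $\Gamma_i'$ in $P$).

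Combining these two observations, $\overline{\partial D_i}$ and $\overline{\Gamma_i'}$ are planar polygonal paths with identical successive edge lengths and identical left angles at all interior vertices. Since such data determine a planar path up to a rigid motion (once an initial vertex and initial edge direction are fixed), the two paths must be congruent, which is the first assertion. For the second assertion, note that $\ol D_i$ is by definition the image of the disk $D_i$ under an immersion into $\R^2$, so $\overline{\partial D_i}$ bounds an immersed disk, and hence so does every rigid-motion image of it, in particular $\overline{\Gamma'}_i$. I don't anticipate any real obstacle here; the only thing that requires a little care is the verification that $\tilde\Gamma_i'$ is genuinely simple and that $D_i$ really does lie on its left, which is why I'd spell out the decomposition $\tilde\Gamma_i'=(\tilde\ell_0\tilde\ell_i)_{\partial P_T}\bullet\tilde\beta_i'$ and invoke the fact that $\inte(\beta_i')\subset P-T$ from Proposition \ref{prop:dualbranch}.
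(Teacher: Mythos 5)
Your proposal is correct and follows essentially the same route as the paper, which proves Lemma \ref{lem:gamma'} precisely by the observation that $\tilde\Gamma_i'$ is a simple closed curve bounding $D_i$ on its left and then invoking the same reasoning used for $\partial P_T$ in Section \ref{sec:trees}. You merely spell out the details (simplicity of $\tilde\Gamma_i'$, matching of edge lengths and left angles via Lemma \ref{lem:stv}, rigidity of planar polygonal paths) that the paper leaves implicit.
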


To prove Proposition \ref{prop:dualbranch}, we need the following lemma whose proof is similar to that of Lemma \ref{lem:mi}. Recall that $j_i$ are local minimizers of $h$ on  $\Gamma$  which traces $T=\pi(\partial P_T)$.  Thus $j_i$ are local minimizers of $h\circ\pi$ on $\partial P_T$. The next observation generalizes this fact. 

\begin{lem}\label{lem:mi2}
Each juncture $j_i$ of $\Gamma$ is a local minimizer of $h\circ\pi$ on $P_T$.
\end{lem}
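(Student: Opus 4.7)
Following the strategy of Lemma \ref{lem:mi}, I would argue by contradiction. The case $j_i = r$ is immediate, since $r$ is the unique global minimum of $h$ on $P$; so assume $j_i \neq r$. Let $v$, $w$ be the vertices of $\Gamma$ preceding and succeeding $j_i$ at its juncture occurrence, both of which lie above $j_i$. Set $S$ to be the left side of $[v, j_i, w]$ in $P$. By Lemma \ref{lem:stv}, $S = \pi(\tilde\st_{\tilde j_i})$, so it suffices to show $h \geq h(j_i)$ on $S$ near $j_i$.

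The plan is to show that $S$ is confined to the ``up'' angular region at $j_i$ on $P$, using two ingredients. First, convexity of $P$ implies that the up-neighbors and down-neighbors of $j_i$ each form a contiguous arc in the cyclic order at $j_i$: the link of $j_i$ is a convex spherical polygon, and the linear function separating up from down directions changes sign along its boundary at most twice. Second, since $T$ is monotone and $j_i \neq r$, the vertex $j_i$ lies in the interior of some branch of $T$, so at least one $T$-edge at $j_i$ goes downward, necessarily sitting in the down-arc. By \eqref{eq:intS}, the sector $S$ is the $T$-sector bounded by $j_iv$, $j_iw$ (both in the up-arc) that contains no other $T$-edge in its interior; the down $T$-edge must therefore lie on the opposite side of $[v, j_i, w]$. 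Together these force $S$ to be the short arc from $j_iv$ to $j_iw$ through the up-arc; hence every $P$-edge at $j_i$ in $S$ is upward, every face of $P$ meeting $S$ at $j_i$ has $j_i$ as its lowest vertex, and $h \geq h(j_i)$ holds on $S$ near $j_i$ as required.

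The main obstacle is the convex-link lemma establishing contiguity of up- and down-neighbors at $j_i$; this is the geometric ingredient that forces the monotone structure of $T$ to cooperate with the juncture sector coming from $\Gamma$. The parallel with the proof of Lemma \ref{lem:mi} is that both arguments derive a contradiction by forbidding a certain incidence inside $\inte(S)$ via \eqref{eq:intS}, except here the forbidden configuration is a $P$-down-edge through $\inte(S)$ at $j_i$ rather than a stray $T$-vertex. A minor technicality, handled separately, is the case when $j_i$ is a vertex of the polygonal tree $T$ but not of $P$; then the local picture around $j_i$ is planar (or piecewise planar across one edge of $P$) and the contiguity of up/down directions is immediate.
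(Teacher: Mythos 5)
Your argument is correct and is essentially the paper's own proof: what you call the contiguity of the down-directions in the link appears in the paper as the connectedness of the sublevel set $X=\{x\in\st_{j_i}:h(x)<h(j_i)\}$, which is disjoint from $[v,j_i,w]$ since $v,w$ lie above $j_i$, and the paper then uses the downward path $(j_ir)_T$ together with \eqref{eq:intS} to conclude that $S=\pi(\tilde\st_{\tilde j_i})$ must be the top side, exactly as you do. The only step you leave implicit is that $v\neq w$, so that $[v,j_i,w]$ genuinely bounds two sides; this follows from Lemma \ref{lem:mi0}, since a juncture is neither a leaf nor (in the case you treat) the root.
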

\begin{proof}
Let $v$, $w$ be vertices of $\Gamma$  which are adjacent to $j_i$. If $j_i=r$, then there is nothing to prove, since $r$ is the absolute minimizer of $h$ on $P$. So  assume that $j_i\neq r$. Then $v\neq w$ by Lemma \ref{lem:mi0}. Consequently  $vw:=[v,j_i, w]$   determines  a pair of sides in $\st_{j_i}$. Let $X\subset \st_{j_i}$ be the set of points whose heights are smaller than $h(j_i)$. Then 
$X$ is connected and is disjoint from $vw$. Thus $X$  lies entirely on one side of $vw$ which will be called the \emph{bottom} side, while the other side will be the \emph{top} side. Recall that $S:=\pi(\widetilde{\st}_{\tilde j_i})$ is one of the sides of $vw$ by Lemma \ref{lem:stv}. We claim  that $S$ is the top side, which is all we need.
To this end  note that the path $j_ir$ of $T$  intersects $X$. So $T$ intersects the interior of the bottom side. But $\inte(S)\cap T=\emptyset$ by \eqref{eq:intS}.
Thus $S$ cannot be the bottom side.
\end{proof}

Now we are ready to prove the main result of this subsection:

\begin{proof}[Proof of Proposition \ref{prop:dualbranch}]
  Let us say a path in $P_T$ is monotone if its projection into $P$ is monotone. We will connect $\tilde\ell_i$ to $\tilde\ell_0$ with a monotone path $\tilde \beta_i'$ in $P_T$ which  intersects $\partial P_T$ only at its end points. Then $\beta_i':=\pi(\tilde \beta_i')$ is the desired path.
We will proceed in two stages:
first (Part I) we construct a monotone path $\tilde \beta_i'$ in $P_T$ which connects $\tilde\ell_i$ to $\tilde\ell_0$, and then (Part II) perturb $\tilde \beta_i'$ to make sure that its interior is disjoint from $\partial P_T$. See Figure \ref{fig:branch} and compare it to Figure \ref{fig:pathII}.

\begin{figure}[h] 
   \centering
   \begin{overpic}[height=1.4in]{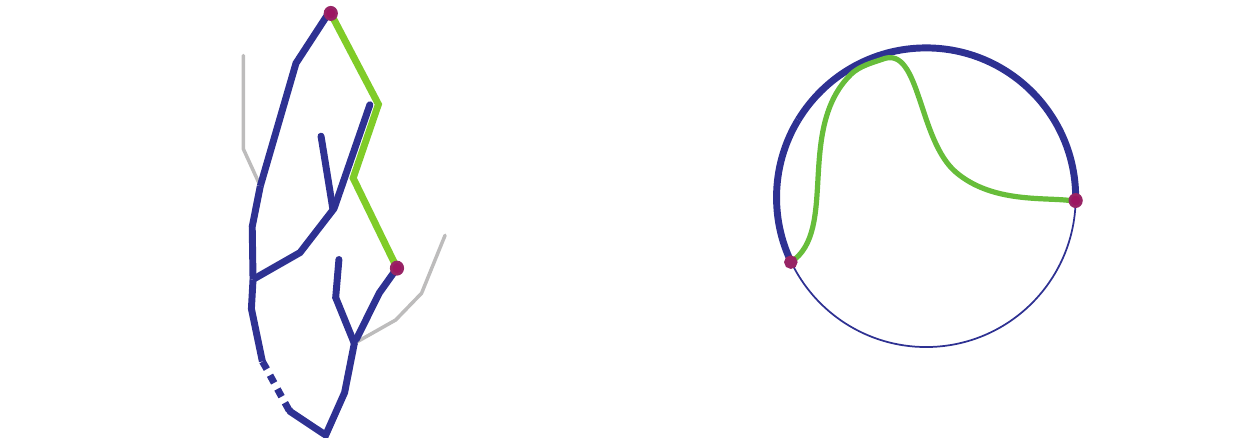} 
     \put(22,35){$\ell_o$}
      \put(31.5,16){$\ell_i$}
        \put(65,5){$P_T$}
         \put(59,13){$\tilde\ell_i$}
          \put(87,18){$\tilde\ell_0$}
   \end{overpic}
   \caption{}
   \label{fig:branch}
\end{figure}

(\emph{Part I}) 
 If $\ell_i=\ell_0$ (i.e., $i=0$), we  set $\tilde\beta_i':=\tilde\ell_0$ and we are done. So suppose  that $\ell_i\neq\ell_0$.
Then there is a vertex $v$  of $P$ adjacent to $\ell_i$ which lies above it. The only edge of $T$ which is adjacent to $\ell_i$ connects to it from below. Thus $\ell_i v$ is not an edge of $T$, and therefore corresponds to a unique edge $\tilde\ell_i\tilde v$ of $P_T$.  This will constitute the first edge of $\tilde\beta_i'$. There are three cases to consider: 


\begin{itemize}
\item[(i)]{$v=\ell_0$},
\item[(ii)]{$v$ is a leaf of $T$ other than $\ell_0$},
\item[(iii)]{$v$ is not a leaf of $T$}.
\end{itemize}
If (i) holds, we are done. If (ii) holds, then we may connect $v$ to an adjacent vertex  $v'$ lying above it to obtain the next edge $\tilde v\tilde v'$ of $\tilde \beta_i'$. If (iii) holds,  then,  by Lemma \ref{lem:mi2}, $v$ cannot be a juncture of $\Gamma$, because it is the highest point of $\ell_i v$. Thus $v$ lies in the interior of a subpath $\ell_{n} j_n$ or $j_n\ell_{n+1}$  of $\Gamma$. In particular, there exists a monotone subpath $v\ell_n$ of $\Gamma^{-1}$ or $v\ell_{n+1}$ of $\Gamma$ which connects $v$ to a leaf $v'$ of $T$ which lies above it. Lifting this path to $\partial P_T$ will extend our path to $\tilde v'$. Now again there are three cases to consider for $v'$, as listed above, and  repeating this process eventually yields the desired path $\tilde\beta_i'$.

(\emph{Part II}) After a subdivision, we may assume that all faces of $P_T$ are triangles. If an edge $E$ of $\tilde\beta_i'$ lies on $\partial P_T$, let $F$ be the face of $\partial P_T$ adjacent to $E$, choose a point $p$ in the interior of $F$ which has the same height as an  interior point of $E$, and replace $E$ with the pair of line segments which connect the vertices of $E$ to $p$, see the left diagram in Figure \ref{fig:branch2}. 
\begin{figure}[h] 
   \centering
   \begin{overpic}[height=0.9in]{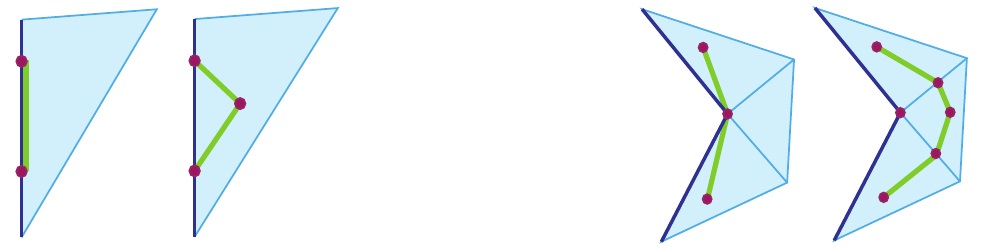} 
   \put(-2,12){$E$}
   \put(25,14){$p$}
   \put(92,13){$p$}
    \put(69,12.5){$v$}
     \put(71.5,19){$a$}
      \put(72,4.75){$b$}
   \end{overpic}
   \caption{}
   \label{fig:branch2}
\end{figure}
Thus we may perturb each edge of $\tilde\beta_i'$ which lies on $\partial P_T$ so that $\tilde\beta_i'$ intersects $\partial P_T$ only at some of its vertices.
Let  $v$ be such a vertex. Further let $a$ (resp. $b$) be a point in the interior of the edge of $\tilde\beta_i'$ adjacent to $v$ which lies above (resp. below) $v$. We need to replace the segment $ab$ of $\tilde\beta_i'$  with another monotone segment in $P_T$ which avoids $v$; see the right diagram in Figure \ref{fig:branch2}. Pick a point $p$ in the interior of the star of $P_T$ at $v$ which has the same height as $v$. It suffices to construct a pair of monotone paths in $\inte(P_T)=P-T$ which connect $a$ and $b$ to $p$. 
The first path may be constructed as follows, and the other path is constructed similarly.
Let $R_a$, $R_b$ be the rays which emanate from $v$ and pass through $a$, $b$ respectively. 
 These rays determine a region $\mathcal{R}$ in the star of $P_T$ at $v$ which is contained  between them. 
  There exists a face $F$ of $P_T$ which contains $a$ and intersects the interior of $\mathcal{R}$.  If $p\in F$, then we connect $a$ to $p$ with a line segment and we are done. If $p\not\in F$, then  $F$ has a unique edge $E$ which lies in the interior of $\mathcal{R}$ and is adjacent to $v$. There is  a point $a'$ in the interior of $E$ which lies below $a$ (because $E$ is adjacent to $v$ which is below $a$). Connect $a$ to $a'$ with a line segment. Next consider the face of $P_T$ which is adjacent to $E$ and is different from $F$. If this face contains $p$ then we  connect $a'$ to $p$ with a line segment and we are done. Otherwise we repeat the above procedure until we reach $p$. 
\end{proof}

\section{Affine Developments of  Monotone Paths}\label{sec:affine}
Here we study the effects of the affine stretchings of $P$ on the developments of its piecewise monotone paths.
The main results of this section are Propositions \ref{prop:path1} and \ref{prop:path2} below. The first proposition shows that affine stretchings of piecewise monotone paths  have piecewise monotone developments, and the second proposition states that  this development is simple if the original curve double covers a monotone path. First we need to prove the following lemmas.
 At each interior vertex  $\gamma_i$ of a path $\Gamma$ in $P$,  let
 $
 \Theta_i
 $
 denote the angle between $\gamma_{i-1}-\gamma_i$, and $\gamma_{i+1}-\gamma_i$ in $\R^3$.
  Further recall that $\theta_i$, ${\theta_i}'$ denote  the left and right angles of $\Gamma$ in $P$.

 \begin{lem}\label{lem:angle0}
 At any interior vertex $\gamma_i$ of a path $\Gamma$ in $P$, we have 
 $\theta_i, \;{\theta_i}'\geq\Theta_i.$
 \end{lem}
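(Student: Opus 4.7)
The plan is to interpret both sides of the inequality as distances on the unit sphere $\S^2$ centered at $\gamma_i$, and then invoke the basic fact that the great-circle distance is the shortest path on the sphere.

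First I would unwind the definitions. Translating so $\gamma_i$ is the origin, the set $\widehat{\st}_{\gamma_i}$ is the cone over the set of unit directions $\{\hat x : x\in\st_{\gamma_i}-\{\gamma_i\}\}$. Each face $F$ containing $\gamma_i$ contributes a great-circle arc on $\S^2$ lying in the plane of $F$, and these arcs fit together along shared incident edges to form the closed piecewise-great-circular curve $\partial\widehat{\st}_{\gamma_i}\subset\S^2$ whose total length equals $\angle_P(\gamma_i)$. By construction the unit vectors $\hat\gamma_{i\pm 1}$ in the directions of $\gamma_{i\pm 1}-\gamma_i$ lie on this curve, and the two arcs into which they split $\partial\widehat{\st}_{\gamma_i}$ have lengths exactly $\theta_i$ and ${\theta_i}'$ by the definition of left and right angles.

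Next I would identify $\Theta_i$ as the spherical distance. By definition $\Theta_i$ is the Euclidean angle between the vectors $\gamma_{i-1}-\gamma_i$ and $\gamma_{i+1}-\gamma_i$, which coincides with the great-circle distance on $\S^2$ between $\hat\gamma_{i-1}$ and $\hat\gamma_{i+1}$. Since any rectifiable curve on $\S^2$ joining two points has length at least the spherical distance between them, each of the two arcs of $\partial\widehat{\st}_{\gamma_i}$ joining $\hat\gamma_{i-1}$ to $\hat\gamma_{i+1}$ has length $\geq \Theta_i$. This gives $\theta_i\geq\Theta_i$ and ${\theta_i}'\geq\Theta_i$ in the generic case $\hat\gamma_{i-1}\neq\hat\gamma_{i+1}$.

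Finally I would dispose of the degenerate case $\hat\gamma_{i-1}=\hat\gamma_{i+1}$, where the definition declares $\theta_i={\theta_i}'=\angle_P(\gamma_i)$; here $\Theta_i=0$, so the inequalities hold trivially. The only mild care needed is to verify the piecewise structure of $\partial\widehat{\st}_{\gamma_i}$ as a curve on $\S^2$ and that its arc-length decomposition matches the definitions in Section \ref{subsec:sidesAndangles}; once that identification is made, the conclusion is just the triangle inequality for spherical distance, so I do not anticipate a substantive obstacle.
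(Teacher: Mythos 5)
Your proof is correct and follows essentially the same route as the paper: identify $\Theta_i$ with the geodesic distance on the unit sphere centered at $\gamma_i$ between the unit directions of $\gamma_{i\pm1}-\gamma_i$, identify $\theta_i$ and ${\theta_i}'$ with the lengths of the two arcs of $\partial\widehat{\st}_{\gamma_i}$ joining those same points, and conclude by minimality of geodesic distance. Your extra care with the piecewise-great-circular structure and the degenerate case $\widehat\gamma_{i-1}=\widehat\gamma_{i+1}$ is fine but not a departure from the paper's argument.
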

 \begin{proof}
 Let $S$ be a unit sphere in $\R^3$ centered at $\gamma_i$, and $\tilde\gamma_{i-1}$, $\tilde\gamma_{i+1}$ be the projections of $\gamma_{i-1}$ and $\gamma_{i+1}$ into $S$ as defined in Section \ref{subsec:sidesAndangles}. Then $\Theta_i$ is the geodesic distance between $\tilde\gamma_{i+1}$ and $\tilde\gamma_{i-1}$ in $S$. So it cannot exceed the length of any curve in $S$ connecting $\tilde\gamma_{i+1}$ and $\tilde\gamma_{i-1}$, including those  which correspond to $\theta_i$, ${\theta_i}'$ .
 \end{proof}

Let $P^\lambda$ denote the 
image of $P$ under the affine stretching 
$
(x,y,z)\mapsto \left(x/\lambda,y/\lambda,z\right)
$.
For any object $X$ associated to $P$ we also let $X^\lambda$ denote the corresponding object of $P^\lambda$. Further, we let $X^\infty$ denote the limit of $X^\lambda$ as $\lambda\to\infty$. In particular note that $P^\infty$ lies on the $z$-axis.
A path  is \emph{monotone} if the heights $h$ of its vertices form a strictly monotone sequence.

\begin{lem}\label{lem:angle1}
Let $\Gamma$ be a monotone path in $P$. Then $\theta_i^\infty= ({\theta_i}')^\infty=\pi$.
\end{lem}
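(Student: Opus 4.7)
The plan is to combine the lower bound from Lemma \ref{lem:angle0} with the upper bound from \eqref{eq:angles} and squeeze both $\theta_i^\lambda$ and $({\theta_i}')^\lambda$ to $\pi$ as $\lambda\to\infty$.

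First I would analyze the Euclidean angle $\Theta_i^\lambda$. Since the stretching $A_\lambda$ preserves $h=\langle\cdot,(0,0,1)\rangle$ and divides the $x,y$ coordinates by $\lambda$, each vertex $\gamma_j^\lambda$ converges to the point $(0,0,h(\gamma_j))$ on the $z$-axis. Because $\Gamma$ is monotone, $h(\gamma_{i-1})$ and $h(\gamma_{i+1})$ lie strictly on opposite sides of $h(\gamma_i)$, so the vectors $\gamma_{i-1}^\infty-\gamma_i^\infty$ and $\gamma_{i+1}^\infty-\gamma_i^\infty$ point in opposite vertical directions. Thus $\Theta_i^\infty=\pi$; more precisely, $\Theta_i^\lambda\to\pi$ as $\lambda\to\infty$, since the unit vectors along these edges converge to $\mp(0,0,1)$ (the $x,y$ components vanish while the $z$ components have bounded nonzero absolute value).

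Next, Lemma \ref{lem:angle0} applied to the path $\Gamma^\lambda$ in $P^\lambda$ gives
\[
\theta_i^\lambda\;\geq\;\Theta_i^\lambda\quad\text{and}\quad ({\theta_i}')^\lambda\;\geq\;\Theta_i^\lambda,
\]
so both $\liminf_{\lambda\to\infty}\theta_i^\lambda$ and $\liminf_{\lambda\to\infty}({\theta_i}')^\lambda$ are at least $\pi$. On the other hand, \eqref{eq:angles} yields
\[
\theta_i^\lambda+({\theta_i}')^\lambda=\angle_{P^\lambda}(\gamma_i^\lambda)\leq 2\pi
\]
for every $\lambda$, which forces $\limsup_{\lambda\to\infty}\theta_i^\lambda\leq\pi$ and $\limsup_{\lambda\to\infty}({\theta_i}')^\lambda\leq\pi$. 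Combining these inequalities gives $\theta_i^\infty=({\theta_i}')^\infty=\pi$.

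The only point requiring mild care is the convergence $\Theta_i^\lambda\to\pi$ itself, since a priori the edge lengths $\|\gamma_{j}^\lambda-\gamma_i^\lambda\|$ could degenerate; but monotonicity of $\Gamma$ guarantees that the vertical gaps $|h(\gamma_j)-h(\gamma_i)|$ are strictly positive and unaffected by $A_\lambda$, so the edges remain nondegenerate and the tangent directions are readily seen to converge to $\pm(0,0,1)$. Everything else is a direct sandwich argument, and no genuine obstacle arises.
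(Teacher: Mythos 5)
Your proof is correct and follows essentially the same route as the paper: show $\Theta_i^\lambda\to\pi$ using that $A_\lambda$ preserves heights and collapses the $x,y$ coordinates, then sandwich $\theta_i^\lambda$ and $({\theta_i}')^\lambda$ between the lower bound from Lemma \ref{lem:angle0} and the upper bound from \eqref{eq:angles}. Your extra care with $\liminf$/$\limsup$ and with nondegeneracy of the limiting edges is a slight refinement of the paper's argument, not a different one.
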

\begin{proof}
For each vertex $\gamma_i$ of $\Gamma$, $h(\gamma_i^\lambda)$ is constant. Thus 
 $h(\gamma_i^\infty)=h(\gamma_i)$. Since $\Gamma$ is monotone, it follows that 
$\gamma_i^\infty$  lies in between $\gamma_{i-1}^\infty$ and $\gamma_{i+1}^\infty$ on the $z$-axis.  So  $\gamma_{i-1}^\infty-\gamma_i^\infty$ and $\gamma_{i+1}^\infty-\gamma_i^\infty$ are antiparallel vectors, 
which yields that 
$
\Theta_i^\infty=\pi.
$
By Lemma \ref{lem:angle0}, $\theta_i^\lambda$, $({\theta_i}')^\lambda\geq\Theta_i^\lambda.$ Thus
$
\theta_i^\infty, ({\theta_i}')^\infty\geq \pi.
$
On the other hand, by \eqref{eq:angles},
$
\theta_i^\infty+ ({\theta_i}')^\infty\leq 2\pi.
$
So  $\theta_i^\infty=({\theta_i}')^\infty=\pi$.
\end{proof}

The last lemma leads to the following observation. 

\begin{lem}\label{lem:angle2}
Let $v$ be a vertex of $P$. Then, $\angle_P(v)^\infty=2\pi$ if $v$ is not  the top or bottom vertex of $P$. Otherwise, $\angle_P(v)^\infty=0$.
\end{lem}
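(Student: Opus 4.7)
The plan is to split into two cases: $v$ is neither the top nor the bottom vertex, and $v$ is one of them. The first case will follow almost immediately from Lemma \ref{lem:angle1} combined with the additivity formula \eqref{eq:p1}, while the second will be a direct computation showing that every face angle at $v$ collapses to $0$ under the stretching.

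For the first case, since $v$ is neither the maximizer nor the minimizer of $h$ on $P$, and since the height function is affine, $v$ is not extremal for $h$ on the convex hull either. Hence among the vertices of $P$ adjacent to $v$ (i.e., the vertices of $\st_v$ other than $v$) there is at least one, say $a$, with $h(a)>h(v)$, and at least one, say $b$, with $h(b)<h(v)$. Then $\Gamma:=[a,v,b]$ is a monotone path in $P$, and so is $\Gamma^\lambda=[a^\lambda,v^\lambda,b^\lambda]$ in $P^\lambda$. Applying Lemma \ref{lem:angle1} to $\Gamma$ yields
$$
\theta_v[\Gamma]^\infty=\theta_v'[\Gamma]^\infty=\pi.
$$
Moreover, for every $\lambda$, the projections $\widehat{a^\lambda}$ and $\widehat{b^\lambda}$ in $\partial\widehat{\st}_{v^\lambda}$ are distinct, since their heights have opposite signs relative to $h(v^\lambda)$. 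Thus \eqref{eq:p1} applies at every finite stage:
$$
\theta_v[\Gamma^\lambda]+\theta_v'[\Gamma^\lambda]=\angle_{P^\lambda}(v^\lambda)=\angle_P(v)^\lambda.
$$
Passing to the limit gives $\angle_P(v)^\infty=\pi+\pi=2\pi$ as desired.

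For the second case, suppose $v$ is the top vertex (the bottom case is analogous). Since $h$ attains its unique maximum on $P$ at $v$, every vertex $w$ of $P$ adjacent to $v$ satisfies $h(w)<h(v)$. Recall that $\angle_P(v)$ equals the sum of the face angles of $P$ at $v$, i.e., the sum over the faces $F$ containing $v$ of the angle $\Theta_F$ between the two edges of $F$ incident to $v$. Fix such a face $F$, with neighboring vertices $a,b$ on $F$. Writing $a-v=(x_a,y_a,z_a)$ with $z_a=h(a)-h(v)<0$, we get
$$
\frac{a^\lambda-v^\lambda}{\|a^\lambda-v^\lambda\|}=\frac{(x_a/\lambda,\,y_a/\lambda,\,z_a)}{\sqrt{(x_a^2+y_a^2)/\lambda^2+z_a^2}}\;\xrightarrow{\lambda\to\infty}\;(0,0,-1),
$$
and likewise $(b^\lambda-v^\lambda)/\|b^\lambda-v^\lambda\|\to(0,0,-1)$. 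Hence $\Theta_F^\infty=0$. Summing the finitely many face angles at $v$ gives $\angle_P(v)^\infty=0$. The case when $v$ is the bottom vertex is identical, with the limiting direction $(0,0,1)$ replacing $(0,0,-1)$.

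The only mild obstacle is to be sure that the additivity \eqref{eq:p1} is preserved under the limit in the first case; this is handled by noting that $\widehat{a^\lambda}\neq\widehat{b^\lambda}$ holds uniformly in $\lambda$ because $a$ and $b$ lie strictly on opposite sides of the horizontal plane through $v$, a condition unaffected by the stretching $A_\lambda$.
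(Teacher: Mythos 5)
Your proof is correct and follows essentially the same route as the paper: for a non-extremal vertex, pick adjacent vertices above and below to form a monotone path through $v$, apply Lemma \ref{lem:angle1}, and conclude via the angle-sum identity; for the top/bottom vertex the paper simply calls the collapse ``obvious,'' which your explicit computation of the limiting edge directions confirms. Your added check that $\widehat{a^\lambda}\neq\widehat{b^\lambda}$ uniformly in $\lambda$ is a worthwhile detail the paper leaves implicit.
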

\begin{proof}
The last statement is obvious. To see the first statement note that if $v$ is not an extremum point of $h$, then since $P$ is convex there exists a monotone path $[u,v, w]$ in $P$, where $u$ and $w$ are  adjacent vertices of $v$. Let $\theta$, $\theta'$ be the  angles of this path at $v$. Then $\theta^\infty=(\theta')^\infty=\pi$ by Lemma \ref{lem:angle1}. So $\angle_P(v)^\infty=2\pi$ by \eqref{eq:angles}.
\end{proof}

A path is \emph{piecewise monotone} if it is composed of monotone subpaths, or does not contain any horizontal edges. The last two lemmas yield:

\begin{lem}\label{lem:angle3}
Let $\Gamma$ be a piecewise monotone path in $P$, and $\gamma_i$ be an interior vertex of $\Gamma$. If $\gamma_i$ is a local extremum of $h$ on $\Gamma$, then $\theta_i^\infty$, $({\theta_i}')^\infty=0$ or $2\pi$. Otherwise $\theta_i^\infty=({\theta_i}')^\infty=\pi$.
\end{lem}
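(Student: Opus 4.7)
My plan is to split the proof according to whether $\gamma_i$ is a local extremum of $h$ on $\Gamma$. Suppose first that $\gamma_i$ is \emph{not} a local extremum. Since $\Gamma$ contains no horizontal edges, one of $\gamma_{i-1},\gamma_{i+1}$ lies strictly above and the other strictly below $\gamma_i$, so the three-vertex subpath $[\gamma_{i-1},\gamma_i,\gamma_{i+1}]$ is itself monotone. Because $\theta_i$ and $\theta_i'$ depend only on $\gamma_i$ together with its two neighbors in $\Gamma$, I would simply apply Lemma \ref{lem:angle1} to this subpath to conclude that $\theta_i^\infty=(\theta_i')^\infty=\pi$.

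Now suppose $\gamma_i$ is a local extremum of $h$ on $\Gamma$, so that both $\gamma_{i\pm1}$ lie strictly on the same side of $\gamma_i$; without loss of generality, say both below. If $\gamma_i$ happens to be the top or bottom vertex of $P$, I would invoke Lemma \ref{lem:angle2} to obtain $\angle_P(\gamma_i)^\infty=0$. Since $\theta_i$ and $\theta_i'$ are arc lengths on the link circle $\partial\widehat{\st}_{\gamma_i}$ of total length $\angle_P(\gamma_i)$, the bounds $0\leq\theta_i,\theta_i'\leq\angle_P(\gamma_i)$ then squeeze both limits to $0$.

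The substantive subcase is when $\gamma_i$ is a local extremum of $\Gamma$ but not an extremum of $P$, so $\angle_P(\gamma_i)^\infty=2\pi$ by Lemma \ref{lem:angle2}. Here I would exploit the convexity of $P$: since $\gamma_i$ is not the top vertex of $P$, it has at least one adjacent vertex $u$ of $P$ with $h(u)>h(\gamma_i)$. Then $[\gamma_{i-1},\gamma_i,u]$ and $[u,\gamma_i,\gamma_{i+1}]$ are both monotone, so Lemma \ref{lem:angle1} yields
$$
\angle(\gamma_{i-1},\gamma_i,u)^\infty=\angle(u,\gamma_i,\gamma_{i+1})^\infty=\pi,
$$
with analogous statements for the complementary right angles. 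For large $\lambda$, the projection $\widehat u^\lambda$ approaches the upward direction on the link while $\widehat\gamma_{i\pm1}^\lambda$ approach the downward direction, so the three rays $R_u,R_{\gamma_{i-1}},R_{\gamma_{i+1}}$ from $\gamma_i$ are distinct and $u$ lies strictly on one definite side of $[\gamma_{i-1},\gamma_i,\gamma_{i+1}]$. If this side is the left, Lemma \ref{lem:abc}(ii) gives $\theta_i=\angle(\gamma_{i-1},\gamma_i,u)+\angle(u,\gamma_i,\gamma_{i+1})$, hence $\theta_i^\infty=2\pi$ and $(\theta_i')^\infty=\angle_P(\gamma_i)^\infty-\theta_i^\infty=0$; the right case is symmetric with the roles of $\theta_i$ and $\theta_i'$ exchanged.

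The main obstacle is the degenerate configuration $\widehat\gamma_{i-1}^\lambda=\widehat\gamma_{i+1}^\lambda$ on the link, which forces $\gamma_{i-1}$ and $\gamma_{i+1}$ to lie on a common ray from $\gamma_i$ (as occurs automatically when $\gamma_i$ is a non-simple interior vertex of $\Gamma$, i.e.\ $\gamma_{i-1}=\gamma_{i+1}$). In that event the convention $\angle(a,o,b):=\angle_P(o)$ when $\widehat a=\widehat b$ forces $\theta_i=\theta_i'=\angle_P(\gamma_i)$ identically, so both limits equal $\angle_P(\gamma_i)^\infty\in\{0,2\pi\}$, again consistent with the claim.
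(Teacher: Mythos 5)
Your argument is correct and follows essentially the same route as the paper: the same case split (non-extremum via Lemma \ref{lem:angle1}; extremum at the top/bottom vertex of $P$ via Lemma \ref{lem:angle2}; otherwise an auxiliary adjacent vertex $u$ above or below $\gamma_i$, combined with Lemma \ref{lem:abc}(ii) and Lemma \ref{lem:angle1} to push one of $\theta_i$, $\theta_i'$ to $2\pi$ and the other to $0$). The only cosmetic differences are that the paper dispatches the non-simple vertex case ($\gamma_{i-1}=\gamma_{i+1}$, where $\theta_i=\angle_P(\gamma_i)$) up front rather than treating the degenerate link configuration at the end, and uses \eqref{eq:angles} where you use the arc-length bound.
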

\begin{proof}
If $\gamma_i$ is not a local extremum of $h$ (on $\Gamma$), then $[\gamma_{i-1}, \gamma_i,\gamma_{i+1}]$ is a monotone path. Consequently, 
$\theta_i^\infty=({\theta_i}')^\infty=\pi$ by Lemma \ref{lem:angle1} as claimed.
Next suppose  that $\gamma_i$ is a local extremum of $h$. If $\gamma_i$ is the top or bottom vertex of $P$, then $\angle_P(\gamma_i)^\infty=0$, by Lemma \ref{lem:angle2}, which yields that $\theta_i^\infty=({\theta_i}')^\infty=0$ by \eqref{eq:angles}, and again we are done. So suppose that $\gamma_i$ is not an extremum vertex. Then $\angle_P(\gamma_i)^\infty=2\pi$ by Lemma \ref{lem:angle2}, and consequently $\theta_i^\infty+({\theta_i}')^\infty=2\pi$ by \eqref{eq:angles}. So we just need to check that  $\theta_i^\infty=0$ or $2\pi$. 
To see this note that if
$\gamma_i$ is not  simple, then  $\theta_i^\lambda=\angle_P(\gamma_i)^\lambda$, which yields that $\theta_i^\infty=2\pi$,  by Lemma \ref{lem:angle2}. 
So we may assume that $\gamma_i$ is simple.  If $\gamma_i$ is a local maximum (resp. local minimum) of $h$, then there exists a vertex $v$ of $P$ which is adjacent to $\gamma_i$ and lies above (resp. below) it. Consequently, $v$ lies strictly either to the right or left of $[\gamma_{i-1}, \gamma_i,\gamma_{i+1}]$.
Suppose that $v$ lies strictly to left of $[\gamma_{i-1}, \gamma_i,\gamma_{i+1}]$.
Then
$
\theta_i^\lambda=\angle(\gamma_{i-1},\gamma_i,v)^\lambda+\angle(v,\gamma_i,\gamma_{i+1})^\lambda,
$
by Lemma \ref{lem:abc}(ii). But $[\gamma_{i-1},\gamma_i,v]$ and $[v, \gamma_i,\gamma_{i+1}]$ are monotone. Thus by Lemma \ref{lem:angle1},
$
\angle(\gamma_{i-1},\gamma_i,v)^\infty=\pi=\angle(v,\gamma_i,\gamma_{i+1})^\infty.
$
So  $\theta_i^\infty=\pi+\pi=2\pi$.
 If, on the other hand, $v$ lies strictly to the right of $[\gamma_{i-1}, \gamma_i,\gamma_{i+1}]$, then $v$ lies strictly to the left of 
$[\gamma_{i+1}, \gamma_i,\gamma_{i-1}]$, and a similar reasoning shows that $({\theta_i}')^\infty=2\pi$, or $\theta_i^\infty=0$. 
\end{proof}

We will assume that all developments below have initial condition $((0,0), (0,-1))$, as defined in Section \ref{sec:mixed}. A monotone path is \emph{positively} (resp. \emph{negatively}) \emph{monotone} provided that the heights of its consecutive vertices form an increasing (resp. decreasing) sequence.

\begin{prop}\label{prop:path1}
Let $\Gamma$ be a piecewise monotone path in $P$ and $\ol\Gamma$ be a mixed development of $\Gamma$.  
Then  $\ol \Gamma^\infty$ is a path with vertical edges. Furthermore, each subpath of $\ol \Gamma^\infty$ which corresponds to a positively (resp. negatively) monotone subpath of $\Gamma$ will be positively (resp. negatively) monotone.
\end{prop}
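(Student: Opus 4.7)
The plan is to prove both assertions by induction on the vertex index, with Lemma~\ref{lem:angle3} providing the essential input about the asymptotic values of the angles. First, I would observe that each edge of $\ol\Gamma^\infty$ has positive length, since
\[
\|\ol\gamma_i^\infty-\ol\gamma_{i-1}^\infty\|=\|\gamma_i^\infty-\gamma_{i-1}^\infty\|=|h(\gamma_i)-h(\gamma_{i-1})|>0,
\]
where the first equality is the length-preservation clause in the definition of a mixed development, and the last inequality uses that a piecewise monotone path has no horizontal edges. Thus $\ol\Gamma^\infty$ is a genuine polygonal path, and it remains to identify the directions of its edges and the monotonicity of its subpaths.

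For the verticality claim, I would induct on $i$. The base case is the initial condition: the first edge of $\ol\Gamma^\lambda$ has direction $\ol u_0=(0,-1)$ for every $\lambda$, so the first edge of $\ol\Gamma^\infty$ is vertical. Assume inductively that the $i$-th edge of $\ol\Gamma^\infty$ is vertical. At $\ol\gamma_i$ the mixed development realizes either the left angle $\theta_i$ or the right angle $\theta_i'$ of $\Gamma$ in $P^\lambda$, and by Lemma~\ref{lem:angle3} this angle tends to $\pi$ if $\gamma_i$ is not a local extremum of $h$ on $\Gamma$, and to $0$ or $2\pi$ otherwise. Unpacking the planar notion of angle from Section~\ref{subsec:sidesAndangles}, a limiting angle of $\pi$ forces the unit vectors from $\ol\gamma_i^\infty$ to $\ol\gamma_{i-1}^\infty$ and $\ol\gamma_{i+1}^\infty$ to be antipodal, so the outgoing edge continues in the direction of the incoming one; a limiting angle of $0$ or $2\pi$ forces these two unit vectors to coincide, so the outgoing edge is anti-parallel to the incoming one. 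In either case the $(i{+}1)$-th edge is vertical, completing the induction.

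For the monotonicity claim, note that the interior vertices of any maximal monotone subpath of $\Gamma$ are exactly those which are not local extrema of $h$ on $\Gamma$, while consecutive monotone subpaths have opposite signs of monotonicity and are separated by a local extremum. By the angle analysis above, all interior angles within a single monotone subpath become $\pi$ in the limit, so its edges in $\ol\Gamma^\infty$ share a common vertical direction; this is precisely what it means for the subpath to be monotone in $\R^2$. At each local extremum the direction reverses, matching the sign change in $\Gamma$, and the sign correspondence (positive-to-positive, negative-to-negative) propagates by induction from the anchoring fact that $\ol\Gamma^\infty$ starts in the downward direction $(0,-1)$, in alignment with the negatively monotone first edge of $\Gamma$ (as is the case in the intended applications, where $\Gamma$ starts at the top vertex).

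The main obstacle is verifying that at a local extremum the $0$-versus-$2\pi$ alternative in Lemma~\ref{lem:angle3} produces the direction reversal pointing the correct way, so that the outgoing edge of $\ol\Gamma^\infty$ matches the sign of monotonicity of the next subpath of $\Gamma$. This comes down to tracking whether the higher (or lower) neighbor of $\gamma_i$ in $P$ lies strictly to the left or to the right of $[\gamma_{i-1},\gamma_i,\gamma_{i+1}]$, which is exactly the case distinction carried out in the proof of Lemma~\ref{lem:angle3}; applying that analysis consistently through the inductive step is the one delicate point of the argument.
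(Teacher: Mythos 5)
Your proof is correct and follows essentially the same route as the paper: first showing the limit edges have positive length (so $\ol\Gamma^\infty$ is a genuine path and its angles are limits of the angles of $\ol\Gamma^\lambda$), then invoking Lemma \ref{lem:angle3} to get angle $\pi$ (straight continuation) at non-extremal vertices and $0$ or $2\pi$ (reversal) at local extrema, anchored by the initial condition $((0,0),(0,-1))$; the paper simply compresses this into ``Lemma \ref{lem:angle3} quickly completes the argument.'' The one point you flag as delicate is in fact automatic: whether the limiting left angle is $0$ or $2\pi$, the outgoing limit direction is the reverse of the incoming one, and this reversal always matches the sign flip of $h$ at a local extremum of $\Gamma$, so no further case analysis is needed.
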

\begin{proof}
Recall that $h(\gamma_i^\infty)=h(\gamma_i)$. So since $\Gamma$ is monotone, $\gamma_i^\infty\neq\gamma_{i-1}^\infty$. Then, since  
$\|\ol\gamma_i^\lambda-\ol\gamma_{i-1}^\lambda\|=\|\gamma_i^\lambda-\gamma_{i-1}^\lambda\|$, it follows that
$\|\ol\gamma_i^\infty-\ol\gamma_{i-1}^\infty\|=\|\gamma_i^\infty-\gamma_{i-1}^\infty\|\neq 0.$
So $\ol\gamma_i^\infty\neq\ol\gamma_{i-1}^\infty$, which means that $\ol\Gamma^\infty$ is a path. In particular $\ol\theta_i^\infty$, $({\ol\theta_i}')^\infty$ are well defined, and are limits of $\ol\theta_i^\lambda$, $({\ol\theta_i}')^\lambda$ respectively.
Now Lemma \ref{lem:angle3} quickly completes the argument. 
\end{proof}

The \emph{doubling} of a path $\Gamma=[\gamma_0, \dots, \gamma_k]$ is  the path
$$
D\Gamma:=\Gamma\bullet \Gamma^{-1}=[\gamma_0,\gamma_1,\dots,\gamma_{k-1},\gamma_k, \gamma_{k-1}, \dots,\gamma_1, \gamma_0]=:[\gamma_0,\dots,\gamma_{2k}].
$$
Our next result shows that doublings of monotone paths which end at vertices of $P$ have simple unfoldings once they get stretched enough.

\begin{prop}\label{prop:path2}
Let $\Gamma=[\gamma_0, \dots, \gamma_k]$ be a  monotone path in $P$ such that $\gamma_k$ is a vertex of $P$ different from its top or bottom vertex, and $\overline{D\Gamma}:=(\overline{D\Gamma})_{\gamma^\ell}$ be a mixed development of $D\Gamma$ based at $\gamma_\ell$ for some $0\leq\ell<k$. Then, for sufficiently large $\lambda$:
\begin{enumerate}
\item[(i)]{$\overline{D\Gamma^\lambda}$ is simple.}
\item[(ii)]{The line which passes through  $\ol \gamma_0^\lambda $, $\ol \gamma_{2k}^\lambda$ intersects $\overline{D\Gamma^\lambda}$ at no other point.}
\item[(iii)]{If $\alpha_0^\lambda$, $\beta_0^\lambda$ denote the interior angles of $\overline{D\Gamma^\lambda}\bullet [\ol \gamma_{2k}^\lambda,\ol \gamma_0^\lambda]$ at $\ol \gamma_0^\lambda $, $\ol \gamma_{2k}^\lambda$, then $\alpha^\lambda_0+\beta^\lambda_0<\pi$. Furthermore, $\alpha^\lambda_0$, $\beta_0^\lambda$ may be arbitrarily close to $\pi/2$.}
\end{enumerate}
\end{prop}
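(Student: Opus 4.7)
The plan is to exploit the limit $\lambda\to\infty$, in which Proposition \ref{prop:path1} degenerates $\overline{D\Gamma^\lambda}$ onto a doubly-traced vertical segment, and then to use the strict convexity inequality $\angle_P(\gamma_k^\lambda)<2\pi$---valid because $\gamma_k$ is a vertex of $P$ distinct from its top and bottom---to resolve the degeneracy into a thin simple polygon for large finite $\lambda$.

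First I will describe the limit picture. By Proposition \ref{prop:path1}, every edge of $\overline{D\Gamma^\infty}$ is vertical; combined with $\|\ol\gamma_i^\infty-\ol\gamma_{i-1}^\infty\|=|h(\gamma_i)-h(\gamma_{i-1})|$ and the monotonicity of $\Gamma$, this forces $\ol\gamma_i^\infty=(0,\,h(\gamma_i)-h(\gamma_0))$ for $0\le i\le k$ and $\ol\gamma_{k+j}^\infty=\ol\gamma_{k-j}^\infty$, so both halves trace the same vertical segment and in particular $\ol\gamma_{2k}^\infty=\ol\gamma_0^\infty$. Since $\ell<k$, the mixed development prescribes the left angle at position $k$, giving $\ol\theta_k^\lambda=\theta_k[D\Gamma^\lambda]=\angle_P(\gamma_k^\lambda)$, which is strictly less than $2\pi$ by convexity and tends to $2\pi$ by Lemma \ref{lem:angle2}. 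Hence a positive wedge of opening $\epsilon_k^\lambda:=2\pi-\angle_P(\gamma_k^\lambda)$ separates the two halves at $\ol\gamma_k^\lambda$ and shrinks to zero as $\lambda\to\infty$.

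For (i), Proposition \ref{prop:path1} also guarantees that each edge of the first (resp.\ second) half has direction close to $(0,-1)$ (resp.\ $(0,1)$) for large $\lambda$, so $y$ is strictly monotone along each half and each half is therefore simple. The wedge $\epsilon_k^\lambda$ pushes the second half a definite transverse distance to one side of the vertical line through $\ol\gamma_k^\lambda$, and near-verticality of both halves propagates this horizontal separation for $\lambda$ large enough that the wedge displacement dominates the cumulative drift arising because the intermediate interior angles only approximate $\pi$. For (ii), after rotating coordinates so that the line through $\ol\gamma_0^\lambda$ and $\ol\gamma_{2k}^\lambda$ becomes horizontal, both halves remain strictly $y$-monotone (since the required rotation is small), so each half meets this line exactly once, at its respective endpoint.

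For (iii), parts (i) and (ii) show that the closed polygon $\overline{D\Gamma^\lambda}\bullet[\ol\gamma_{2k}^\lambda,\ol\gamma_0^\lambda]$ is simple, so its $2k+1$ interior angles sum to $(2k-1)\pi$. The interior angle at $\ol\gamma_k^\lambda$ equals $2\pi-\angle_P(\gamma_k^\lambda)$, and each interior angle at an intermediate vertex can be written in terms of $\theta_i[\Gamma^\lambda]$, $\theta_i'[\Gamma^\lambda]$, and $\angle_P(\gamma_i^\lambda)$ via \eqref{eq:angles} and the mixed-development prescription; bookkeeping then collapses these contributions to
\[
\alpha_0^\lambda+\beta_0^\lambda=\sum_{i=\ell+1}^{k}\angle_P(\gamma_i^\lambda)+(2\ell-2k+1)\pi,
\]
which is strictly less than $\pi$ because each summand is at most $2\pi$ with strict inequality at $\gamma_k$. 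As $\lambda\to\infty$, each $\angle_P(\gamma_i^\lambda)\to 2\pi$, so the sum tends to $\pi$, and the near-symmetry of the limit configuration forces each of $\alpha_0^\lambda,\beta_0^\lambda$ individually to $\pi/2$. The chief obstacle will be making the separation argument of (i) quantitative: since both halves are small perturbations of the same limiting vertical segment, one must ensure that the wedge $\epsilon_k^\lambda$ really dominates the cumulative drift, which will likely require finer convergence-rate estimates on the angles than the qualitative statements in Lemmas \ref{lem:angle1}--\ref{lem:angle3} directly provide.
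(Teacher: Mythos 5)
There is a genuine gap, and you have put your finger on it yourself: the separation step in (i). Your argument reduces to the claim that the wedge $\epsilon_k^\lambda=2\pi-\angle_P(\gamma_k^\lambda)$ ``dominates the cumulative drift'' coming from the intermediate angles deviating from $\pi$. But both quantities tend to $0$ as $\lambda\to\infty$, and Lemmas \ref{lem:angle1}--\ref{lem:angle3} give no information about their relative rates; a priori the accumulated deviations along the two halves could swamp the wedge and fold the second half back across the first. Since the entire content of the proposition is this separation, the proof as written does not close.

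The missing idea is that no rate comparison is needed: the mechanism is the exact convexity inequality at each interior vertex, not an asymptotic domination. If $\ol\theta_{k-j}$ and $\ol\theta_{k+j}$ denote the left development angles at the corresponding vertices of the down-going and up-going halves, then one checks (with a two-case analysis, according to whether the mixed development prescribes the left or the right angle there) that $\ol\theta_{k-j}+\ol\theta_{k+j}\le 2\pi$ always, because $\theta_{k-j}[\Gamma]+\theta_{k-j}'[\Gamma]=\angle_P(\gamma_{k-j})\le 2\pi$. Hence the angular gap between corresponding edges of the two halves is \emph{nondecreasing} as one moves away from $\gamma_k$, starting from the strictly positive wedge at $\ol\gamma_k$. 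The paper packages this as an induction on the number of edges of $\Gamma$: assuming the doubled subpath $D\Gamma_1=D[\gamma_1,\dots,\gamma_k]$ satisfies (i)--(iii), in particular $\alpha_1^\lambda+\beta_1^\lambda<\pi$, the inequality $\ol\theta_1+\ol\theta_{2k-1}\le2\pi$ yields $\alpha_1'+\beta_1'>\pi$ for the new quadrilateral $Q=\ol\gamma_0\ol\gamma_1\ol\gamma_{2k-1}\ol\gamma_{2k}$, so $Q$ is convex and the two new edges cannot cross; (ii) then follows because $\overline{D\Gamma_1}$ sits in a half-disk of bounded radius above $L_1$ while $L_0$ stays a definite distance below it, and (iii) is the angle sum of $Q$. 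Your limit picture, your use of $\angle_P(\gamma_k)<2\pi$, and your angle-sum computation for (iii) are all consistent with this, but without the monotone-gap observation (or equivalently the inductive invariant $\alpha^\lambda+\beta^\lambda<\pi$) the argument for (i) does not go through, and (ii), (iii) depend on (i).
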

\begin{proof}
We proceed by induction on the number of edges of $\Gamma$. Clearly the proposition holds when $\Gamma$ has only one edge. Suppose that it holds for  the subpath $\Gamma_1^\lambda:=[\gamma_1^\lambda,\dots, \gamma_{k}^\lambda]$ of $\Gamma^\lambda$. Then we claim that it also holds for $\Gamma^\lambda$. Henceforth we will assume that $\lambda$ is arbitrarily large and  drop the explicit reference to it. 
Let $L_1$ be the line passing through the end points $\ol \gamma_1$, $\ol \gamma_{2k-1}$ of $\overline{D\Gamma_1}$, and $o$ be the midpoint of $\ol \gamma_1\ol \gamma_{2k-1}$. We may assume, after rigid motions, that  $o$ is fixed, $L_1$ is  horizontal, and $\overline{D\Gamma_1}$ lies above $L_1$, see Figure \ref{fig:trapezoid}.  Furthermore, since by assumption $\gamma_k$ is not the top or bottom vertex of $P$, we may assume that the left angle of $D\Gamma$ at $\gamma_k$ (which coincides with the total angle of $P$ at $\gamma_k$) is arbitrarily close to $2\pi$ by Lemma \ref{lem:angle2}.
 \begin{figure}[h] 
   \centering
   \begin{overpic}[height=1in]{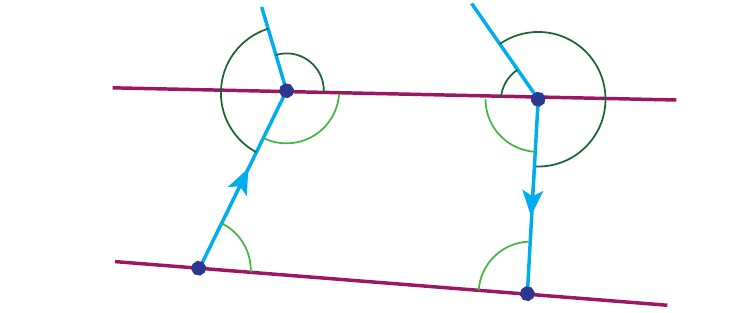} 
   \put(8,5){$L_0$}
   \put(8,28){$L_1$}
     \put(25,1){$\ol\gamma_0$}
     \put(68,-2.5){$\ol\gamma_{2k}$}
      \put(31,32){$\ol\gamma_1$}
       \put(70,32){$\ol\gamma_{2k-1}$}
        \put(34,9){$\alpha_0$}
          \put(59,7){$\beta_0$}
          \put(40,20){$\alpha_1'$}
          \put(61,20){$\beta_1'$}
          \put(40,35){$\alpha_1$}
          \put(61,32){$\beta_1$}
          \put(25,23){$\ol\theta_1$}
          \put(81,22){$\ol\theta_{2k-1}$}
   \end{overpic}
   \caption{}
   \label{fig:trapezoid}
\end{figure}
Then it follows that $\ol\gamma_{2k-1}$ lies to the right of $\ol\gamma_1$ on $L_1$, just as depicted in Figure \ref{fig:trapezoid}. Now we claim that $\ol \gamma_0$, $\ol \gamma_{2k}$ lie below $L_1$. To see this, let $\alpha_1$, $\beta_1$ be the interior  angles of  $\overline{D\Gamma_1}\bullet [\ol \gamma_{2k-1},\ol \gamma_1]$ at $\ol \gamma_1$, $\ol \gamma_{2k-1}$ respectively. 
Further let $\ol\theta_1$, $\ol\theta_{2k-1}$ denote respectively the left angles of $\overline{D\Gamma}$ at $\ol \gamma_1$ and $\ol \gamma_{2k-1}$. We may assume that $\alpha_1$, $\beta_1\approx\pi/2$ by the inductive hypothesis on $\Gamma_1$. 
By Lemma \ref{lem:angle1}, we may also assume that 
$
\ol\theta_1,\ol\theta_{2k-1}\approx \pi.
$
 So
\begin{equation}\label{eq:alpha1beta1}
\alpha_1+\ol\theta_1\approx\frac{3\pi}{2},  \quad\quad      \text{and}    \quad\quad    \beta_1+\ol\theta_{2k-1}\approx\frac{3\pi}{2},
\end{equation}
which show that $\ol \gamma_0$, $\ol \gamma_{2k}$  lie below $L_1$ as claimed. 
Next we show that  $\ol \gamma_1 \ol \gamma_0$, $\ol \gamma_{2k-1}\ol \gamma_{2k}$ do not intersect, which will establish  (i). To this end it suffices to check that $\alpha_1'+\beta_1'\geq\pi$, where 
$$
\alpha_1':=2\pi-\alpha_1-\ol\theta_1, \quad \text{and} \quad \beta_1':=2\pi-\beta_1-\ol\theta_{2k-1}.
$$
There are two cases to consider:  either
 $\ol\theta_1=\theta_1$ or ${\ol\theta_1}'={\theta_1}'$ by the definition of mixed development. If  $\ol\theta_1=\theta_1$, then
$$
\ol\theta_1+\ol\theta_{2k-1}=\theta_1+\theta_{2k-1}=\theta_1+{\theta_1}'=\angle_P(\gamma_1)\leq 2\pi,
$$ 
where the identity $\theta_{2k-1}={\theta_1}'$ used here follows from the definition of $D\Gamma$. If, on the other hand, ${\ol\theta_1}'={\theta_1}'$, then 
$$
\ol\theta_1+\ol\theta_{2k-1}=2\pi-{\ol\theta_1}'+\ol\theta_{2k-1}=
2\pi-{\theta_1}'+\theta_{2k-1}= 2\pi.
$$
So we always have $\ol\theta_1+\ol\theta_{2k-1}\leq2\pi$.
Also note that $\alpha_1+\beta_1<\pi$ by the inductive hypothesis on $\Gamma_1$.
Thus it follows that, 
\begin{equation}\label{eq:alphabeta}
\alpha_1'+\beta_1'=4\pi-(\alpha_1+\beta_1)-(\ol\theta_1+\ol\theta_{2k-1})>4\pi-\pi-2\pi=\pi,
\end{equation}
 as desired.
 To establish  (ii), let $L_0$ be the line passing through $\ol \gamma_0$, $\ol \gamma_{2k}$. By \eqref{eq:alphabeta}  the quadrilateral $Q:=\ol \gamma_0 \ol \gamma_1 \ol \gamma_{2k-1} \ol \gamma_{2k}$ is convex. Thus $\ol \gamma_1$, $\ol \gamma_{2k-1}$ lie on the same side or ``above" $L_0$. It remains to check  that $\overline{D\Gamma_1}$ is disjoint from $L_0$. To this end note that the length of $\overline{D\Gamma_1}$ is bounded from above, since affine stretchings do not increase lengths. So $\overline{D\Gamma_1}$ is contained in a half disk $H$ of some constant radius which lies above $L_1$ and   is centered at $o$. Further  $\ol\gamma_1\ol\gamma_0$ and $\ol\gamma_{2k-1}\ol\gamma_{2k}$ are almost orthogonal to $L_0$ by \eqref{eq:alpha1beta1}, and they have the same length, which is bounded from below (by $|h(\gamma_1)-h(\gamma_0)|$). Thus $L_0$ is nearly parallel to $L_1$ while its distance from $o$ is  bounded from below. So $L_0$ will be disjoint from $H$.
Finally, (iii) follows immediately from \eqref{eq:alphabeta}, since $Q$ is a simple quadrilateral and thus the sum of its interior angles  is $2\pi$. 
\end{proof}

 \section{Proof of Theorem \ref{thm:main}}\label{sec:proof}
 For convenience, we may assume that $u=(0,0,1)$.
Let $\Gamma:=\Gamma_T$ be the  path which traces $T$ as defined in Section \ref{sec:trees}. Recall that, as we showed in Section \ref{sec:monotonetrees}, $\Gamma$ admits a decomposition into monotone subpaths:
 \begin{equation*}
 \Gamma=\ell_0j_0\bullet j_0\ell_1\bullet\dots\bullet\ell_{k-1}j_{k-1}\bullet j_{k-1}\ell_0.
 \end{equation*}
 Also recall that $\ell_ij_i$ are negatively monotone, and $j_i\ell_{i+1}$ are  positively monotone.  
By Proposition \ref{prop:disk} we just need to show that the development $\ol\Gamma^\lambda$ is simple for large $\lambda$. To this end, we first record how large $\lambda$ needs to be, and then proceed  by induction on the number of  leaves of $T$. 
 
 \subsection{Fixing the stretching factor $\lambda$}\label{subsec:proof1}
 Let $\Gamma_i$, $\Gamma'_i$ be the paths  defined in Section \ref{sec:monotonetrees}, and recall that these paths also admit decompositions into monotone subpaths:
 \begin{eqnarray*}
  \Gamma_i&=&\ell_0j_0\bullet j_0\ell_1\bullet\dots\bullet\ell_{i-1}j_{i-1}\bullet j_{i-1}\ell_i\bullet\ell_i r,\quad\; 0\leq i\leq k-1, \\ 
     \Gamma_i'&=&\ell_0j_0\bullet j_0\ell_1\bullet\dots\bullet\ell_{i-1}j_{i-1}\bullet j_{i-1}\ell_i\bullet\ell_i \ell_0, \quad 1\leq i\leq k. 
 \end{eqnarray*}
Let $\Gamma_i^\lambda$, $(\Gamma_i')^\lambda$ 
 denote the affine stretching of these paths, and $\ol\Gamma_i^\lambda$, $(\overline{\Gamma_i'})^\lambda$ be their corresponding developments with initial condition $((0,0),(0,-1))$, as in Section \ref{sec:affine}.     We need to choose $\lambda$ so large that:
 
\smallskip
 \begin{enumerate}
 \item[\textbf{(C1)}]{\emph{For each positively (resp. negatively) monotone subpath of $\Gamma_i$ or $\Gamma_i'$ the corresponding subpath of $\ol\Gamma_i^\lambda$ or $(\overline{\Gamma_i'})^\lambda$ is  positively (resp. negatively) monotone}}.\\
 \item[\textbf{(C2)}]{\emph{$(\ol\Gamma_i^\lambda)^{-1}\circ\ol\Gamma_{i+1}^\lambda$ is simple and lies on one side of the line $L^\lambda$ passing through its end points. Furthermore, $L^\lambda$ is not vertical (see Figure \ref{fig:lambda})}}.
 \end{enumerate}
   \begin{figure}[h] 
   \centering
   \begin{overpic}[height=1.5in]{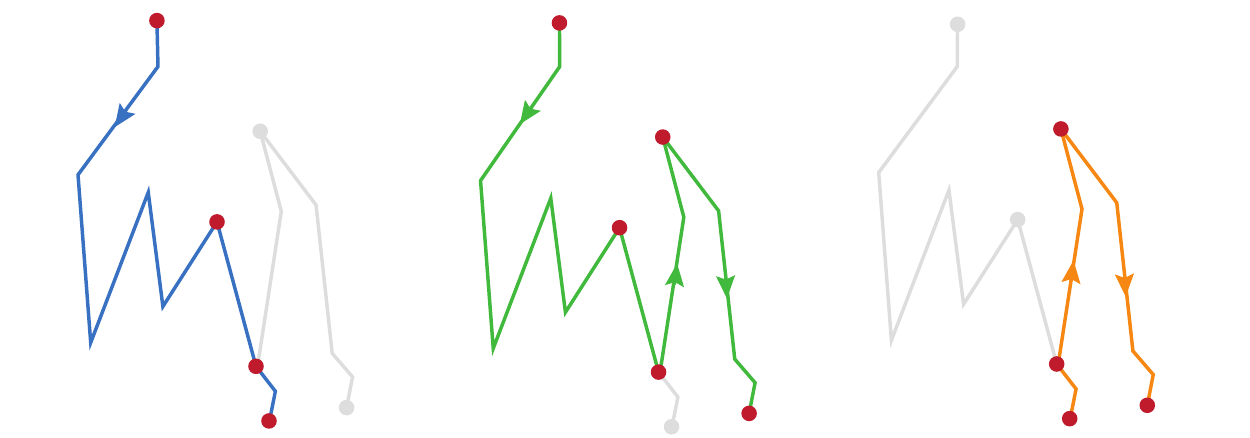} 
   \put(9,33){$\ol\ell_0$}
   \put(41,33){$\ol\ell_0$}
    \put(16,19){$\ol\ell_i$}
   \put(48.5,18.5){$\ol\ell_i$}
   \put(17,5){$\ol j_i$}
   \put(49,5){$\ol j_i$}
     \put(81,5){$\ol j_i$}
      \put(18,0){$\ol r_i$}
     \put(82.5,0){$\ol r_i$}
      \put(61,1){$\ol r_{i+1}$}
      \put(93,1){$\ol r_{i+1}$}
       \put(50,26){$\ol \ell_{i+1}$}
       \put(82,26.5){$\ol \ell_{i+1}$}
        \put(4,24){$\ol\Gamma_i$}
   \put(33,24){$\ol\Gamma_{i+1}$}
    \put(65,24){$\ol\Gamma_i^{-1}\circ\ol\Gamma_{i+1}$}
   \end{overpic}
   \caption{}
   \label{fig:lambda} 
\end{figure}

\noindent
To see that (C1) holds let $\gamma_j$, $\ol\gamma_j^\infty$ denote the vertices of $\Gamma$, $\ol\Gamma^\infty$, and set
 $$
 0<\epsilon<\frac{1}{2}\inf_j\|\ol \gamma_j^\infty-\ol \gamma_{j-1}^\infty\|=\frac{1}{2}\inf_j| h(\gamma_j)- h(\gamma_{j-1})|.
 $$
 Choose $\lambda$ so large that $\|\ol\gamma_j^\lambda-\ol\gamma_j^\infty\|\leq\epsilon$.
Then
$\ol\gamma_j^\lambda$ lies below (resp. above) $\ol\gamma_{j-1}^\lambda$
 if and only if $\ol\gamma_j^\infty$ lies below (resp. above) $\ol\gamma_{j-1}^\infty$. Thus monotone subpaths of $\ol\Gamma_i^\lambda$ correspond to those of $\ol\Gamma_i^\infty$, which by Proposition \ref{prop:path1} correspond to the monotone subpaths of $\Gamma_i$. Similarly we may obtain an estimate for $\lambda$ in $(\overline{\Gamma_i'})^\lambda$.
To see that (C2) holds note that, by Proposition \ref{cor:branches}, 
$$
(\ol\Gamma_i^\lambda)^{-1}\circ\ol\Gamma_{i+1}^\lambda\equiv\overline{(\beta_{i+1}^\lambda)^{-1}\bullet \beta_{i+1}^\lambda})_{j_i^\lambda}\equiv(\overline{D\beta_{i+1}^\lambda})_{j_i^\lambda}.
$$
 So, since $\beta_i$ are monotone, it follows from Proposition \ref{prop:path2} that the right hand side of the above expression is simple and lies on one side of the line $L^\lambda$ passing through its end points. Further, $L^\lambda$ becomes arbitrarily close to meeting  
$(\ol\Gamma_i^\lambda)^{-1}\circ\ol\Gamma_{i+1}^\lambda$ orthogonally, as $\lambda$ grows large. At the same time, the edges of  $(\ol\Gamma_i^\lambda)^{-1}\circ\ol\Gamma_{i+1}^\lambda$ become arbitrarily close to being vertical, by Proposition \ref{prop:path1}. Thus  $L^\lambda$ cannot be vertical for  large $\lambda$. 
For the rest of the proof we fix $\lambda$ to be so large that  (C1), (C2) hold, and  drop the explicit reference to it.

\subsection{The inductive step}\label{subset:proof2}
It remains to show that $\ol\Gamma$ is simple. To this end  recall the definition of weakly monotone from Section \ref{sec:immersion}, and observe  that:
 
 \begin{lem}\label{lem:Di}
For $0\leq i\leq k-1$, if $\ol \Gamma_i$ is weakly monotone,  then  $\overline{\Gamma'}_{i+1}$ is simple.  
\end{lem}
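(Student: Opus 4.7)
By Lemma \ref{lem:gamma'}, the closed polygonal curve $\overline{\Gamma'}_{i+1}$ coincides, up to a rigid motion, with the boundary of the immersed disk $\ol D_{i+1}$. The plan is to apply Proposition \ref{cor:disk}: if we can exhibit a pair of points on $\partial \ol D_{i+1}$ splitting the boundary into two weakly monotone arcs, then $\ol D_{i+1}$ is embedded, and hence its boundary $\overline{\Gamma'}_{i+1}$ is simple.

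I would choose $p_0:=\ol\ell_0$ and $p_1:=\ol j_{k_0}$, where $k_0\in\{0,\dots,i\}$ is an index minimising $h(j_k)$, and consider the two arcs
\begin{equation*}
A_1:=\overline{(\ell_0 j_{k_0})}_\Gamma,\qquad A_2:=\overline{(j_{k_0}\ell_{i+1})}_\Gamma\bullet \ol\beta'_{i+1}.
\end{equation*}
Since $(\ell_0 j_{k_0})_\Gamma$ is an initial subpath of $\Gamma_i=(\ell_0 j_i)_\Gamma\bullet(j_i r)_T$, the arc $A_1$ is a subpath of $\ol\Gamma_i$, and hence simple by the weak monotonicity hypothesis. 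Moreover $\ol\ell_0$ realises the global maximum of $h$ on $P$, and the choice of $k_0$ together with the defining local-minimum inequalities $h(j_k)<h(\ell_k),h(\ell_{k+1})$ shows that $\ol j_{k_0}$ realises the minimum of $h$ over the vertices---and hence, by linearity of each edge, over all points---of $A_1$. The vertical ray upward from $\ol\ell_0$ and the vertical ray downward from $\ol j_{k_0}$ therefore lie in height-ranges disjoint from $A_1$, so $A_1$ is weakly monotone.

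The same bookkeeping shows that $\ol j_{k_0}$ is also the minimum, and $\ol\ell_0$ the maximum, of the heights on $A_2$: the remaining junctures $j_{k_0+1},\dots,j_i$ have heights at least $h(j_{k_0})$ by our choice of $k_0$, while $\ol\beta'_{i+1}$ rises monotonically from $\ol\ell_{i+1}$ to $\ol\ell_0$ with all heights exceeding $h(\ell_{i+1})>h(j_i)\geq h(j_{k_0})$. Thus the weak monotonicity of $A_2$ reduces to its simplicity. To prove the latter I would decompose $A_2=\overline{(j_{k_0}j_i)}_\Gamma\bullet \overline{(j_i\ell_{i+1})}_\Gamma\bullet\ol\beta'_{i+1}$: the first piece is a subpath of $\ol\Gamma_i$ and so simple, while the concatenation of the last two pieces is the development of a path that is monotone on each half and joined at the leaf $\ell_{i+1}$, which by Proposition \ref{prop:path1} and condition (C1) is, for $\lambda$ large, a simple nearly-vertical monotonically rising curve.

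The main obstacle is the transversality check that this rising branch $\overline{(j_i\ell_{i+1})}_\Gamma\bullet\ol\beta'_{i+1}$ meets the zigzag $\overline{(j_{k_0}j_i)}_\Gamma\subset\ol\Gamma_i$ only at their common vertex $\ol j_i$. To settle it I would combine Lemma \ref{lem:mi}, which via Lemma \ref{lem:abc}(i) says that the rising branch departs $\ol j_i$ strictly to the left of $\ol\Gamma_i$'s own continuation $\overline{(j_i r)}_T$, with condition (C2), which via Proposition \ref{cor:branches} guarantees that the associated mixed development $(\ol\Gamma_i)^{-1}\circ\ol\Gamma_{i+1}=\overline{D\beta_{i+1}}_{j_i}$ is simple and lies on one side of a non-vertical line through its endpoints. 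Together these should confine the rising branch to a single component of the plane cut by the weakly monotone extension of $\ol\Gamma_i$ by vertical rays, thereby preventing the branch from re-puncturing $\ol\Gamma_i$ before it closes up at $\ol\ell_0$; this final confinement is where I expect the real work to lie.
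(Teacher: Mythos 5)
Your opening moves match the paper exactly: invoke Lemma \ref{lem:gamma'} to realize $\overline{\Gamma'}_{i+1}$ as the boundary of an immersed disk, then apply Proposition \ref{cor:disk} after splitting the boundary into two weakly monotone arcs. The difficulty lies entirely in your choice of split point. You cut at the \emph{lowest} juncture $\ol j_{k_0}$, which makes the second arc $A_2=\overline{(j_{k_0}\ell_{i+1})}_\Gamma\bullet\ol\beta'_{i+1}$ a zigzag (it rises to $\ol\ell_{k_0+1}$, falls to $\ol j_{k_0+1}$, and so on) rather than a monotone curve, and you are then forced to prove that the final rising branch does not re-intersect the zigzag. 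That ``transversality check,'' which you explicitly leave as the place ``where the real work lies,'' is a genuine gap: it does not follow from Lemma \ref{lem:mi} plus (C2) alone, and it is essentially the content of the hard intersection analysis that the paper carries out only later, in the proof of Lemma \ref{prop:induction} (items (I) and (II) of the inductive step, with the auxiliary curve $A$ and the slab argument). So as written, $A_2$ is not shown to be simple, let alone weakly monotone.

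The paper avoids all of this by cutting at the \emph{last} juncture $\ol j_i$ instead: then $\overline{\Gamma'}_{i+1}=\overline{\ell_0 j_i}\bullet\overline{j_i\ell_0}$, where $\overline{\ell_0 j_i}$ is weakly monotone as a subpath of the weakly monotone $\ol\Gamma_i$ (recall $\Gamma_i=(\ell_0 j_i)_\Gamma\bullet(j_i r)_T$), and $j_i\ell_0=j_i\ell_{i+1}\bullet\beta'_{i+1}$ is \emph{genuinely monotone} --- both pieces are positively monotone and they join at $\ell_{i+1}$ --- so by (C1) its development is a monotone, hence weakly monotone, arc with no intersection analysis required. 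That is the one idea your proof is missing. A secondary caution: your argument that $\ol j_{k_0}$ minimizes height on $A_1$ compares heights $h(j_k)$ of \emph{non-adjacent} vertices, whereas (C1) as stated only controls the relative heights of consecutive developed vertices; ties or near-ties among $h(j_0),\dots,h(j_i)$ would require enlarging $\lambda$ beyond what (C1) guarantees. The paper's decomposition never needs to compare junctures with one another, so it sidesteps this issue as well.
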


\begin{proof}
By Lemma \ref{lem:gamma'}, $\overline{\Gamma'}_{i+1}$ bounds an immersed disk.  So, by Proposition \ref{cor:disk}, it suffices to show that $\overline{ \Gamma'}_{i+1}$ admits a decomposition into a pair of weakly monotone curves. Indeed,
$
\overline{ \Gamma'}_{i+1}=\overline{\ell_0  j_i}\bullet\overline{j_i\ell_{0}},
$
see Figure \ref{fig:path2}. 
Note that $\overline{\ell_0  j_i}$ is weakly monotone, because it is a subpath of $\ol \Gamma_{i}$. 
To show that $\overline{j_i\ell_{0}}$ is  also weakly monotone,  via (C1), it suffices to check that $j_i\ell_0$ is  monotone. This is  the case, since 
$
j_i\ell_0=j_i\ell_{i+1}\bullet \ell_{i+1}\ell_0=j_i\ell_{i+1}\bullet \beta_{i+1}',
$
and $j_i\ell_{i+1}$, $\beta_{i+1}'$ are both positively monotone.
 \end{proof}
 
 \begin{figure}[h] 
   \centering
   \begin{overpic}[height=1.5in]{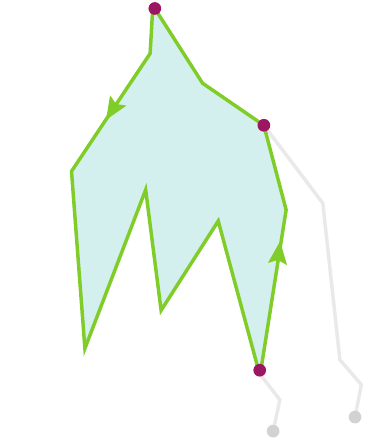} 
   \put(25,98){$\ol\ell_0$}
   \put(56,77){$\ol\ell_{i+1}$}
   \put(33,64){$\ol D_{i+1}$}
    \put(-3,54){$\ol \Gamma_{i+1}'$}
    \put(50,14){$\ol j_i$}
   \end{overpic}
   \caption{}
   \label{fig:path2}
\end{figure}

Now recall that $\Gamma_k'=\Gamma$ by \eqref{eq:defGamPrime}. Thus, by Lemma \ref{lem:Di}, to complete the proof of Theorem \ref{thm:main} it suffices to show that $\ol \Gamma_{k-1}$ is weakly monotone.
By (C1),
$\ol \Gamma_0$ is  monotone, since $\Gamma_0=\beta_0$ is monotone. So  it  remains to show:

\begin{lem}\label{prop:induction}
For $0\leq i\leq k-2$, if $\ol \Gamma_i$ is weakly monotone, then so is  $\ol \Gamma_{i+1}$.
\end{lem}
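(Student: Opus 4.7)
The paths $\ol\Gamma_i$ and $\ol\Gamma_{i+1}$ share the common prefix $\ol A := \ol{(\ell_0j_i)_\Gamma}$ and diverge at the juncture $\ol j_i$. Writing $\ol\Gamma_i = \ol A \bullet \ol C$ and $\ol\Gamma_{i+1} = \ol A \bullet \ol B$, the tail $\ol C$ is the monotone descent from $\ol j_i$ to $\ol r_i$, while $\ol B$ traces the excursion from $\ol j_i$ up to $\ol\ell_{i+1}$ and then monotonically down to $\ol r_{i+1}$. The plan is to transfer the weak monotonicity of $\ol A\bullet\ol C$ to that of $\ol A\bullet\ol B$ by replacing the tail.

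The main task is to show that $\ol\Gamma_{i+1}$ is simple, which reduces to establishing $\ol A \cap \ol B = \{\ol j_i\}$: the curve $\ol A$ is simple as a subpath of the simple $\ol\Gamma_i$, and $\ol B$ is simple as a subpath of the simple composition $(\ol\Gamma_i)^{-1}\circ\ol\Gamma_{i+1}$ guaranteed by (C2). Locally near $\ol j_i$, Lemma \ref{lem:mi} combined with Lemma \ref{lem:abc}(i) forces $\ol B$ to leave $\ol j_i$ strictly to the left of $\ol C$, so $\ol A$ and $\ol B$ cannot meet in a neighborhood of $\ol j_i$ apart from at $\ol j_i$ itself. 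Globally, (C2) confines the entire detour $\ol C^{-1}\bullet \ol B$, and hence $\ol B$, to one side of the line $L$ through $\ol r_i$ and $\ol r_{i+1}$, while Propositions \ref{prop:path1} and \ref{prop:path2}(iii) constrain this detour to a narrow near-vertical strip whose terminal edges meet the nearly horizontal $L$ almost orthogonally. For sufficiently large $\lambda$, this strip is disjoint from the portion of $\ol\Gamma_i$ preceding $\ol j_i$, so $\ol B$ cannot reach back and meet $\ol A$.

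With simplicity in hand, the vertical ray extensions come easily. The upward ray from $\ol\ell_0$ is disjoint from $\ol A$ and $\ol C$ by weak monotonicity of $\ol\Gamma_i$, and disjoint from $\ol B$ because for large $\lambda$ the path $\ol B$ sits near the bottom of the figure, well below the height of $\ol\ell_0$. The downward ray from $\ol r_{i+1}$ avoids $\ol\Gamma_{i+1}$ because $r$ is the unique minimum of $h$ on $P$ and relative heights of vertices are essentially preserved by developments, so $\ol r_{i+1}$ is the lowest vertex of $\ol\Gamma_{i+1}$ for large $\lambda$.

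I expect the hardest step to be the global separation argument in the second paragraph. The subtlety is that $\ol B$ first rises from $\ol j_i$ to $\ol\ell_{i+1}$ before descending to $\ol r_{i+1}$, so one must rule out the ascending half of $\ol B$ curling around and crossing $\ol A$. Handling this rigorously requires the precise geometric input from Proposition \ref{prop:path2}(iii), which forces angles close to $\pi/2$ and near-orthogonality to $L$, together with Proposition \ref{prop:path1}, which makes the edges nearly vertical for large $\lambda$; in combination, these confine the detour to a thin region close to $L$, far from the upper portion of $\ol\Gamma_i$ where $\ol A$ resides.
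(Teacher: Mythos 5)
Your outline (reduce to simplicity of $\ol\Gamma_{i+1}$ plus the two ray conditions, split $\ol\Gamma_{i+1}$ at the juncture $\ol j_i$, and get simplicity of the two halves from the inductive hypothesis and from (C2) respectively) matches the skeleton of the paper's argument, and the local statement at $\ol j_i$ via Lemmas \ref{lem:mi} and \ref{lem:abc}(i) is fine. But the step you yourself flag as the hardest one --- ruling out the ascending arc $\overline{j_i\ell_{i+1}}$ of $\ol B$ crossing the common prefix $\ol A=\overline{\ell_0 j_i}$ --- is handled by a metric separation claim that is false. Proposition \ref{prop:path2} confines the doubled branch to one side of $L$ and makes it thin and near-vertical, but it does not keep it ``close to $L$'': its vertical extent is $|h(\ell_{i+1})-h(r)|$, a fixed positive quantity independent of $\lambda$, so the sliver containing $\ol B$ rises right through the band of heights occupied by $\ol A$ (which contains $\ol\ell_1,\dots,\ol\ell_i$ and $\ol j_0,\dots,\ol j_{i-1}$ and terminates at $\ol j_i$, exactly where $\ol B$ starts). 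Worse, as $\lambda\to\infty$ \emph{every} developed subpath here collapses onto the same vertical line through the origin (all limit angles are $0$, $\pi$, or $2\pi$ by Lemma \ref{lem:angle3}), so $\ol A$ and $\ol B$ become arbitrarily close to each other; no argument of the form ``for large $\lambda$ these two sets are metrically far apart'' can succeed. Disjointness must come from a topological side-of-curve argument, not from separation.

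This is precisely where the paper deploys the machinery you never invoke: the dual branches $\beta_{i+1}'$, the closed path $\Gamma_{i+1}'=(\ell_0\ell_{i+1})_\Gamma\bullet\beta_{i+1}'$ bounding the immersed disk $D_{i+1}\subset P_T$, and Lemma \ref{lem:Di}, which uses the embeddedness criterion of Proposition \ref{cor:disk} to conclude that $\overline{\Gamma'}_{i+1}$ is \emph{simple} whenever $\ol\Gamma_i$ is weakly monotone. Since $\overline{\ell_0 j_i}$ and $\overline{j_i\ell_{i+1}}$ are both subarcs of the simple closed curve $\overline{\Gamma'}_{i+1}$, they can meet only at their common endpoints --- this is the fact that kills the crossing you are worried about, and it is also what drives the side-of-$A$ bookkeeping in the paper's step (II) (where the barrier $A$ and the disk $\ol D_{i+1}$ lying to its left are used to separate $a\ol\ell_{i+1}$ from $\ol\ell_{i+1}b_{i+1}$). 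Note that this is also the only place the hypothesis ``$\ol\Gamma_i$ is weakly monotone'' is consumed in an essential way; in your proposal that hypothesis is used only for the simplicity of $\ol A$, which is a sign that the inductive mechanism has been lost. To repair the proof you would need to reintroduce $\Gamma_{i+1}'$, $D_{i+1}$, and Lemma \ref{lem:Di} (or an equivalent), at which point you are essentially reconstructing the paper's argument.
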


\noindent
To establish this lemma, let $a$ be  a point on the $y$-axis which lies above all paths $\ol\Gamma_i$, $\overline{\Gamma_i'}$. Further let $\ol r_i$ be the final point of $\ol\Gamma_i$ and $b_i$ be  a point
with the same $x$-coordinate as $\ol r_i$ which lies below all paths $\ol\Gamma_j$, $\overline{\Gamma_j'}$. We may also assume that all $b_i$ have the same height. Now    set
$$
\widehat \Gamma_i:=a\ol\ell_0\bullet\ol \Gamma_i\bullet \ol r_ib_i.
$$ 
 Then $\ol \Gamma_i$ is weakly monotone if and only if $\widehat \Gamma_i$ is simple. Thus to prove Lemma \ref{prop:induction}, we need to show that $\widehat \Gamma_{i+1}$ is simple,  if $\widehat \Gamma_i$ is simple.  To this end note that 
 $\widehat\Gamma_{i+1}=a \ol\ell_{i+1}\bullet\ol\ell_{i+1}b_{i+1}$. Thus it suffices to check  that 
\begin{enumerate}
\item[(I)] $a \ol\ell_{i+1}$ and $\ol\ell_{i+1}b_{i+1}$  are each simple,
\item[(II)] $a \ol\ell_{i+1}\cap\ol\ell_{i+1}b_{i+1}=\{\ol\ell_{i+1}\}$.
\end{enumerate}

\subsection{Proof of the inductive step}
It remains to establish  items (I) and (II) above subject to the assumption  that $\widehat \Gamma_i$ is simple, or $\ol\Gamma_i$ is weakly monotone,  in which case $\overline{\Gamma'}_{i+1}$ is also simple by Lemma \ref{lem:Di}.

\subsubsection{(I)}
First we check that $\ol\ell_{i+1}b_{i+1}$ is simple. Note that
$$
\ol\ell_{i+1}b_{i+1}=\ol\ell_{i+1}\ol r_{i+1}\bullet\ol r_{i+1}b_{i+1}=\overline{\ell_{i+1} r}\bullet \ol r_{i+1}b_{i+1},
$$
see the right diagram in Figure \ref{fig:path}.
Recall that  $\ol r_{i+1}b_{i+1}$ is negatively monotone by the definition of $b_{i+1}$. Further, by (C1), $\overline{\ell_{i+1} r}$ is negatively monotone as well, since $\ell_{i+1} r$ is negatively monotone by the definition of $\Gamma_{i+1}$. So $\ol\ell_{i+1}b_{i+1}$ is monotone and therefore simple.
\begin{figure}[h] 
   \centering
   \begin{overpic}[height=1.75in]{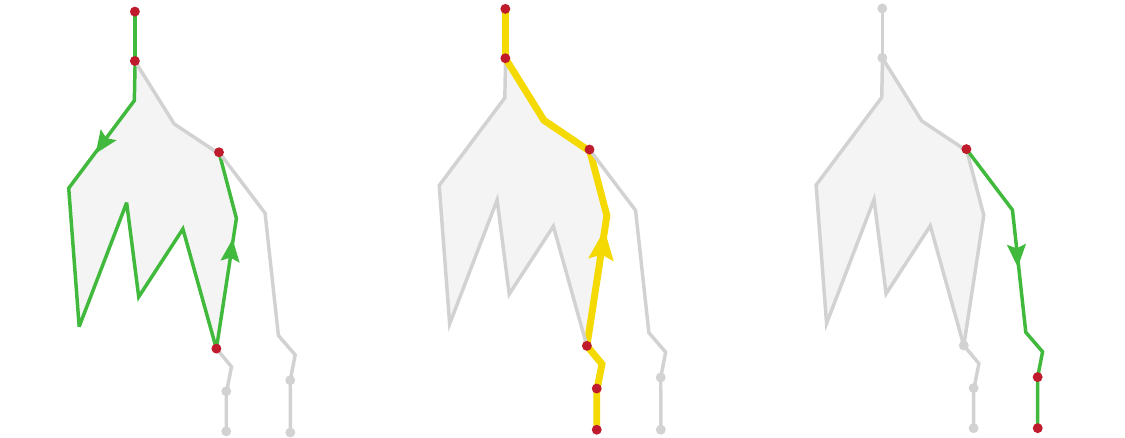} 
    \put(8,33){$\ol\ell_0$}
   \put(41,33){$\ol\ell_0$}
    \put(9.5,38){$a$}
   \put(42.5,38){$a$}
   \put(15,7.5){$\ol j_i$}
   \put(48,8){$\ol j_i$}
      \put(93.5,5){$\ol r_{i+1}$}
       \put(93.5,0){$b_{i+1}$}
       \put(50,4){$\ol r_{i}$}
       \put(50,-0.5){$b_{i}$}
       \put(17,27){$\ol \ell_{i+1}$}
       \put(50.5,19){$A$}
        \put(51,27){$\ol \ell_{i+1}$}
       \put(84,27){$\ol \ell_{i+1}$}
   \end{overpic}
   \caption{}
   \label{fig:path}
\end{figure}
Next we establish  the simplicity of $a\ol\ell_{i+1}$. Note that 
$
a \ol\ell_{i+1}
=a \ol j_i\bullet\ol j_i\ol \ell_{i+1},
$ 
and $a \ol j_i$ is simple because it is a subpath of $\widehat\Gamma_i$, see the left diagram in Figure \ref{fig:path}. Furthermore, $\ol j_i\ol \ell_{i+1}$ is simple as well, because it is a subpath of $\overline{\Gamma'}_{i+1}$. It remains to check that 
 $
a \ol j_i\cap \ol j_i\ol \ell_{i+1}=\{\ol j_i\}.
 $ 
  To see this note that $a \ol j_i=a\ol\ell_0\bullet\ol\ell_0 \ol j_i$. Thus it suffices to show that
  $$
 \ol\ell_{0}\ol j_i\cap \ol j_i\ol\ell_{i+1}=\{\ol j_i\},\quad\text{and}\quad a\ol\ell_0\cap \ol j_i\ol\ell_{i+1}=\emptyset.
  $$
  The first equality  holds because $\ol\ell_0 \ol j_i$ and $\ol j_i\ol\ell_{i+1}$ are both subpaths of $\overline{\Gamma'}_{i+1}$. To see the second equality   note that $\ol j_i\ol\ell_{i+1}=\overline{j_i\ell_{i+1}}$ is positively monotone by (C1) while $a\ol\ell_0$ is negatively monotone by definition of $a$. So it suffices to check that $\ol\ell_{i+1}$ lies below $\ol\ell_0$. This is the case because 
 $
\ol\ell_{i+1}\ol\ell_{0}=\overline{\ell_{i+1}\ell_0}=\overline{\beta_{i+1}'},
$
and $\beta_{i+1}'$ is positively monotone. Thus $\ol\ell_{i+1}\ol\ell_0$ is positively monotone by (C1).

\subsubsection{(II)} Let
$$
A:=b_i\ol r_i\bullet (\overline{r_ij_i })_{\ol\Gamma_i^{-1}} \bullet (\overline{j_i\ell_0})_{\overline{\Gamma'}_{i+1}}\bullet \ol\ell_0a,
$$
 see the middle diagram in Figure \ref{fig:path}. 
Since each of the paths in this composition is positively monotone,  $A$ is   simple. Let $S\subset\R^2$ be the slab contained between the horizontal line passing through $a$ and the horizontal line on which all $b_i$ lie. Then $S-A$ will have precisely two components, whose closures will be called the \emph{sides} of $A$, and may be distinguished as the left  and the right side in the obvious way. To establish claim (II) above it suffices to show:
 \begin{itemize}
  \item[(i)]{$a\ol\ell_{i+1}$ lies to the left of $A$}, 
  \item[(ii)]{one point of $\ol\ell_{i+1}b_{i+1}$ lies strictly to the right of $A$},
  \item[(iii)]{$\ol\ell_{i+1}b_{i+1}\cap A=\{\ol\ell_{i+1}\}$}. 
 \end{itemize}
 Indeed, (ii) and (iii) show that all of $\ol\ell_{i+1}b_{i+1}$  lies to the right of $A$, because $\ol\ell_{i+1}b_{i+1}$ is connected and lies in the slab $S$. This together with (i) show that $a\ol\ell_{i+1}$ and $\ol\ell_{i+1}b_{i+1}$ may intersect only along $A$, and then (iii) ensures that the intersection is $\ol\ell_{i+1}$. It remains to establish each of the three items listed above:

 \emph{(i)} We have $a\ol\ell_{i+1}=a\ol\ell_0\bullet \ol\ell_0\ol j_i\bullet\ol j_i\ol\ell_{i+1}$. Note that $a\ol\ell_0$ and $\ol j_i\ol\ell_{i+1}$ lie on $A$. Thus it remains to check that  $\ol\ell_0\ol j_i$ lies to the left of $A$. We have
 $$
 A=b_i\ol j_i\bullet \ol j_i\ol\ell_0\bullet \ol\ell_0a.
 $$
Note that 
 $\ol\ell_0\ol j_i$ meets $\ol\ell_0a$ and $b_i\ol j_i$ only at its end points, since all these paths lie on $\widehat\Gamma_i$. Further $\ol\ell_0\ol j_i$ meets  $\ol j_i\ol \ell_0$ again only at its end points, since these paths lie on $\ol{\Gamma_{i+1}'}$. So $A$ meets $\ol\ell_0\ol j_i$  only at its end points. It suffices to show then that a point in the interior of $\ol\ell_0\ol j_i$ lies on the left of $A$. This is so, because  
 $\ol D_{i+1}$ lies on the left of $\overline{\Gamma'}_{i+1}$ and the orientations of $A$ and $\overline{\Gamma'}_{i+1}$ agree where they meet.

\emph{(ii)} Near $\ol\ell_{i+1}$, $A$ coincides with  $\overline{\Gamma'}_{i+1}$. Let $C$ be a circle centered at $\ol\ell_{i+1}$ whose radius is so small that it intersects $\overline{\Gamma'}_{i+1}$ and $A$ only at two points, see the right diagram in Figure \ref{fig:ii}. 
\begin{figure}[h] 
   \centering
   \begin{overpic}[height=1in]{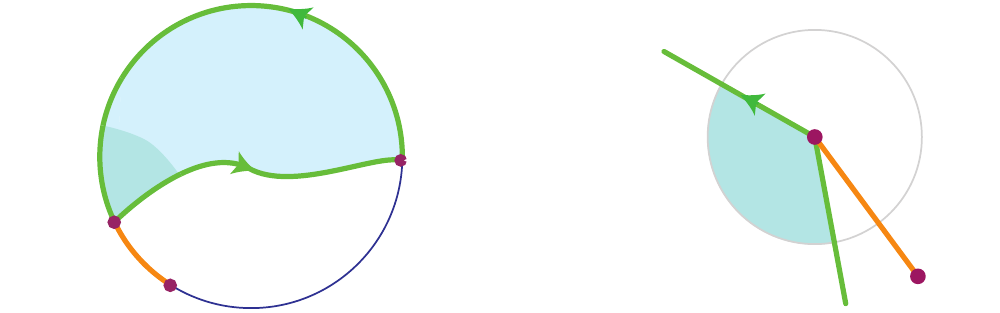} 
   \put(2,24){$\tilde\Gamma_{i+1}'$}
   \put(21,21){$D_{i+1}$}
    \put(11.5,12.5){$U$}
    \put(3,7){$\tilde\ell_{i+1}$}
     \put(93,20){$C$}
     \put(10,2){$E$}
     \put(41,14){$\tilde\ell_{0}$}
      \put(75,12.5){$\ol U$}
       \put(79,20){$\ol\ell_{i+1}$}
        \put(90,7){$\ol E$}
         \put(59,24){$\ol\Gamma_{i+1}'$}
   \end{overpic}
   \caption{}
   \label{fig:ii}
\end{figure}
Then  there exists a neighborhood $U$ of $\tilde\ell_{i+1}$ in $D_{i+1}$ whose image $\ol U$ coincides with the  left side of $\overline{\Gamma'}_{i+1}$ in $C$. Consider the edge $\ol E$ of $\ol\ell_{i+1}b_{i+1}$ which is adjacent to $\ol\ell_{i+1}$, and let $E$ be the corresponding edge of $\tilde\Gamma_{i+1}$ in $\partial P_T$, see the left diagram in Figure \ref{fig:ii}. We claim that there is a point of $\ol E$ inside $C$ which lies strictly to the right of $\overline{\Gamma'}_{i+1}$, or is disjoint from $\ol U$, which is all we need. To see this recall that  the unfolding $P_T\to \R^2$ is locally one-to-one. Thus it suffices to note that the interior of $E$ is disjoint from $D_{i+1}$.  Indeed, since $E\subset\partial P_T$,
$
E\cap D_{i+1}=E\cap D_{i+1}\cap\partial P_T=E\cap\tilde\ell_0\tilde\ell_{i+1}=\{\tilde\ell_{i+1}\}.
$

\emph{(iii)} Since $\ol\ell_{i+1}b_{i+1}$ is negatively monotone,  it may intersect $A$ only along its subpath which lies below $\ol\ell_{i+1}$, that is  $b_i\ol\ell_{i+1}$. So it suffices to check that $b_i\ol\ell_{i+1}\cap\ol\ell_{i+1}b_{i+1}=\{\ell_{i+1}\}$, or
$b_ib_{i+1}:=b_i\ol\ell_{i+1}\bullet\ol\ell_{i+1}b_{i+1}$ is simple. To see this note that
$$
b_ib_{i+1}=b_i\ol r_i\bullet \overline{r_i r_{i+1}}\bullet \ol r_{i+1}b_{i+1},
$$
see Figure \ref{fig:rr}.
The first and  third paths in this decomposition are  simple. Further, 
$
\overline{r_ir_{i+1}}=\ol\Gamma_i^{-1}\circ\ol\Gamma_{i+1}
$
 which is also simple by (C2). Furthermore, again by (C2), $\overline{r_i r_{i+1}}$ lies above the line $L$ passing through its end points, while $b_i\ol r_i$ and $\ol r_{i+1}b_{i+1}$ lie below $L$ (``above" and ``below" here are all well-defined, since $L$ is not vertical by (C2)). So $b_ib_{i+1}$ is simple, as claimed.

\begin{figure}[h] 
   \centering
   \begin{overpic}[height=1.1in]{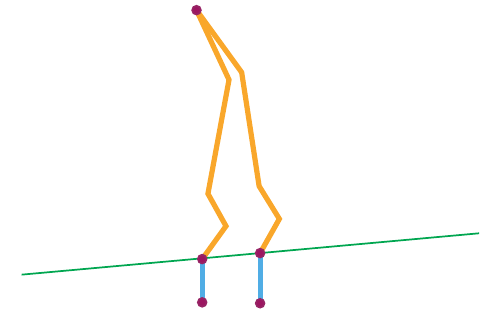} 
   \put(24,60){$\ol\ell_{i+1}$}
    \put(33,13){$\ol r_{i}$}
     \put(33,0){$b_{i}$}
     \put(56,12){$\ol r_{i+1}$}
     \put(54,0){$b_{i+1}$}
       \put(-2,6){$L$}
   \end{overpic}
   \caption{}
   \label{fig:rr}
\end{figure}

\section*{Appendix A: More on Embeddedness of Immersed Disks}

Here we generalize Proposition \ref{cor:disk}, in case it might  be useful in making further progress on \Durers problem.
We say $R\subset\R^2$ is a \emph{ray} emanating from  $p$ if there exists a continuous one-to-one map $r\colon[0,\infty)\to\R^2$ such that $r([0,\infty))=R$, $r(0)=p$ and $\|r(t)-p\|\to\infty$  as $t\to\infty$. Also, as before, for any $X\subset D$, and mapping $f\colon D\to\R^2$, we set $\ol X:=f(X)$, and say $\ol X$ is simple if $f$ is one-to-one on $X$.

\begin{thm}\label{thm:disk}
Let $D\overset{f}{\to}\R^2$ be an  immersion. Suppose there are $k\geq 2$ distinct points $p_i$, $i\in \Z_k$, cyclically arranged in $\partial D$ such that $\overline{p_ip_{i+1}}$ is simple. Further suppose that there are rays $R_i\subset\R^2$ emanating from  $\ol p_i$  such that  
\begin{enumerate}
\item[(i)]{$R_i\cap R_{i+1}=\emptyset$,}
\item[(ii)]{$R_i\cap\overline{p_{i-1}p_{i+1}}=\{\ol p_i\}$,}
\item[(iii)] there is an open neighborhood $U_i$ of  $p_i$ in $D$ and a point $r_i\in R_i-\{\ol p_i\}$ such that $\ol U_i\cap \ol p_ir_i=\{\ol p_i\}$. 
\end{enumerate}
Then $\ol D$ is simple.
\end{thm}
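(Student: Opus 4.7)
The plan is to mimic the construction in the proof of Proposition \ref{cor:disk}: build an abstract disk $D' \supset D$ together with an immersion $f' : D' \to \R^2$ extending $f$ whose boundary maps to a simple closed curve in $\R^2$, and then invoke \cite[Lem.\ 1.1]{ghomi:topology} to conclude that $f'$, and hence $f$, is an embedding.

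First I would choose a round circle $C \subset \R^2$ so large that $\ol D$ lies in its interior and each ray $R_i$ meets $C$ at a single point $x_i$; let $\alpha_i$ denote the compact sub-arc of $R_i$ from $\ol p_i$ to $x_i$. By hypothesis the concatenation $\alpha_i^{-1} \cdot \ol{p_i p_{i+1}} \cdot \alpha_{i+1}$ is a simple arc joining $x_i$ to $x_{i+1}$: its three pieces are individually simple, $\ol{p_i p_{i+1}}$ meets $\alpha_i$ only at $\ol p_i$ and $\alpha_{i+1}$ only at $\ol p_{i+1}$ by (ii), and $\alpha_i \cap \alpha_{i+1} \subset R_i \cap R_{i+1} = \emptyset$ by (i). Assuming (as discussed below) that the $x_i$ appear on $C$ in the cyclic order inherited from the $p_i$ on $\partial D$, the complementary arc $\gamma_i \subset C$ from $x_{i+1}$ to $x_i$ containing no other $x_j$ completes the above arc to a Jordan curve bounding a closed disk $D_i \subset \R^2$; invoking (iii), we choose the side so that $D_i$ lies locally opposite to $\ol D$ across $\ol{p_i p_{i+1}}$.

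Next I would form the abstract surface $D'$ by gluing $D$ to each $D_i$ along $\ol{p_i p_{i+1}}$ and gluing $D_{i-1}$ to $D_i$ along their shared arc $\alpha_i$. The CW count $\chi(D') = (k+1) - 3k + 2k = 1$ with a single boundary component tracing $C$ shows that $D'$ is a topological disk. The natural extension $f' : D' \to \R^2$ is an immersion, where the nontrivial local checks occur along the gluing arcs: at interior points of $\ol{p_i p_{i+1}}$, $\ol D$ and $D_i$ sit on opposite sides by construction; at interior points of $\alpha_i$, the disks $D_{i-1}$ and $D_i$ sit on opposite sides because $\alpha_i$ occurs with opposite orientations in $\partial D_{i-1}$ and $\partial D_i$; and at each vertex $\ol p_i$, condition (iii) guarantees that $R_i$ enters the sector of $\R^2$ opposite to the local image of $D$, so that $\ol D$, $D_{i-1}$, and $D_i$ partition a small neighborhood of $\ol p_i$ into three non-overlapping sectors. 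Since $\partial D' = C$ is simple, \cite[Lem.\ 1.1]{ghomi:topology} applied exactly as in the proof of Proposition \ref{cor:disk} yields that $f'$ is an embedding, whence $\ol D$ is simple.

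The main obstacle I expect is justifying the cyclic-ordering assumption on the $x_i$: conditions (i)--(iii) control each $R_i$ only locally and against its immediate neighbors, whereas for $k \geq 4$ non-adjacent rays might in principle weave and so exit $C$ in a permuted order, in which case the arcs $\gamma_i$ would overlap on $C$ and $\partial D'$ would fail to be simple. Resolving this likely requires replacing the round $C$ with a more carefully chosen simple closed curve --- for instance, one obtained by running far out along tubular neighborhoods of the $R_i$ (using (iii) to keep these disjoint from $\ol D$) and then joining their far ends by arcs far from $\ol D$ in the correct cyclic order. Once the outer boundary has been arranged to be simple, the abstract gluing and the local immersivity checks above complete the proof.
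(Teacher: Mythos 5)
Your construction is essentially the one the paper uses (enlarge $\ol D$ by gluing on, for each $i$, a region $D_i$ cut out by the arc $\ol p_iq_i\cup\overline{p_ip_{i+1}}\cup\ol p_{i+1}q_{i+1}$ and a piece of a large circle $C$, then invoke the covering-space lemma), and your local injectivity checks along $\overline{p_ip_{i+1}}$, along the rays, and at the $\ol p_i$ are the right ones in outline, though they are exactly where the paper spends most of its effort (Part II of its proof, via Lemma \ref{lem:X}, which propagates the ``opposite side'' information by connectedness rather than by an orientation count). The genuine gap is the one you flag yourself and leave unresolved: you need $\partial D'$ to \emph{be} the simple closed curve $C$, hence you need the exit points $x_i$ to appear on $C$ in the correct cyclic order and the arcs $\gamma_i$ to tile $C$. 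Your proposed repair --- disjoint tubular neighborhoods of the rays joined far away in the right order --- cannot work in general, because hypothesis (i) only forbids \emph{adjacent} rays from meeting: for $k\geq 4$, non-adjacent rays $R_i$ and $R_j$ are allowed to cross each other arbitrarily, so there need not exist disjoint tubes, and the exit points genuinely may come out permuted. (A smaller issue: a ray may meet a round circle many times however large the circle is; one should take the \emph{first} intersection point, as the paper does.)

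The way out is that the ordering is irrelevant, and you already have the tool that makes it so. Lemma \ref{lem:preX} (equivalently the lemma cited in the proof of Proposition \ref{cor:disk}) does not require the glued-up surface $M$ to be a disk, nor its boundary to be simple or to exhaust $C$; it only requires $M$ to be a compact connected surface immersed in $\R^2$ with $\overline{\partial M}$ \emph{lying on} a simple closed curve. Since each $D_i$ is a closed subdisk of the region bounded by $C$ whose free boundary (after the abstract gluings to $D$ along $\overline{p_ip_{i+1}}$ and to $D_{i\pm1}$ along the ray segments) is an arc of $C$, the resulting $M$ automatically satisfies this, no matter how the $q_i$ are arranged on $C$ and regardless of whether the $\gamma_i$ overlap. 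Dropping the Euler-characteristic step and the insistence that $\partial D'=C$, and instead quoting Lemma \ref{lem:preX} in this strength, closes the gap; what then remains is to carry out the local one-to-one verifications carefully (in particular the three-sector analysis at each $\ol p_i$, which is where hypothesis (iii) is actually consumed).
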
 

To prove this theorem we need  a pair of lemmas, which follow from  the theorem on the invariance of domain (if $M$ and $N$ are  manifolds of the same dimension and without boundary, $U\subset M$ is open, and $f\colon U\to N$ is a one-to-one continuous map, then $f(U)$ is open in $N$). The first lemma also uses the fact that  a simply connected manifold admits only trivial coverings.

\begin{lem}\label{lem:preX}
Let $M$ be a compact connected surface, and $M\overset{f}{\to}\R^2$ be an immersion. Suppose that $\overline{\partial M}$ lies on a simple closed curve $C\subset\R^2$. Then $\ol M$ is simple.
\end{lem}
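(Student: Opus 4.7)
The plan is to exhibit $f$ as a covering map of the closed topological disk bounded by $C$, and then use simple connectivity to force a single sheet.

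Let $\Omega_1$ and $\Omega_2$ be the bounded and unbounded components of $\R^2\setminus C$ (Jordan curve theorem), and write $\ol{\Omega_1}$ for the closed disk bounded by $C$. First I would locate the image of $f$. Invariance of domain, applied to the restriction of $f$ to the manifold-without-boundary $M\setminus\partial M$, shows that $f(M\setminus\partial M)$ is open in $\R^2$. Since $f(\partial M)\subset C$, the intersection $f(M)\cap\Omega_i$ equals $f(M\setminus\partial M)\cap\Omega_i$, which is open in $\Omega_i$; it is also closed in $\Omega_i$ because $f(M)$ is closed by compactness of $M$. So $f(M)\cap\Omega_i$ is clopen in the connected set $\Omega_i$, hence empty or all of it. Boundedness of $f(M)$ rules out $\Omega_2$, and (the only non-trivial case being $M\neq\partial M$) we obtain $f(M)=\ol{\Omega_1}$; as a byproduct, $f^{-1}(C)=\partial M$ and $f(M\setminus\partial M)\subset\Omega_1$.

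Next I would verify that $f\cn M\to\ol{\Omega_1}$ is a local homeomorphism at every point. At interior points of $M$ this is invariance of domain again. At $p\in\partial M$, local injectivity of the immersion yields a compact half-disk neighborhood $\ol U\ni p$ on which $f|_{\ol U}$ is a continuous injection from a compact Hausdorff space, hence a homeomorphism onto its image; that image is a topological half-disk whose straight edge maps into $C$ and whose open face maps into $\Omega_1$, so invariance of domain for manifolds with boundary shows $f(\ol U)$ is a half-disk neighborhood of $f(p)$ in $\ol{\Omega_1}$. Since $M$ is compact, $f$ is proper, and a proper local homeomorphism between Hausdorff spaces is a covering map. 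Because $\ol{\Omega_1}$ is simply connected (Jordan--Schoenflies), this covering is trivial, and connectedness of $M$ forces a single sheet. Therefore $f$ is a homeomorphism and in particular injective, so $\ol M$ is simple.

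The chief technical hurdle is the local-homeomorphism property at $\partial M$: invariance of domain is not directly applicable at boundary points, and one must combine local injectivity with the previously secured containment $f(M)\subset\ol{\Omega_1}$ to see that the image of a small boundary half-disk genuinely fills a two-dimensional neighborhood inside $\ol{\Omega_1}$ rather than collapsing to a curve. An equivalent way around this is to double everything, replacing $M$ by its double $DM$ and $\ol{\Omega_1}$ by $S^2$, and extend $f$ to an immersion $Df\cn DM\to S^2$ between closed surfaces---then invariance of domain applies everywhere at once, and the same covering-space argument finishes the proof.
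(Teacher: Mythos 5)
Your proof is correct and follows essentially the same route as the paper's: confine $f(M)$ to the closed disk bounded by $C$, upgrade the immersion to a local homeomorphism onto that disk (the boundary case being the only delicate point, which both arguments handle by combining local injectivity with the containment of $\overline{\partial M}$ in $C$), and conclude via covering-space theory over a simply connected base. The only cosmetic differences are that the paper first straightens $C$ to a circle by Schoenflies and locates the image by maximizing $\|f\|$ rather than by your clopen argument in the Jordan components.
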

\begin{proof}
There is a homeomorphism $\phi\colon\R^2\to\R^2$ which maps $C$ to $\S^1$, by the theorem of Schoenflies \cite{moise}. So we may assume that $\overline{\partial M}\subset\S^1$, after replacing $f$ with $\phi\circ f$.  Since $M$ is compact, it contains a point $x$ which maximizes  $\|f\|\colon M\to\R$. 
By invariance of domain, $\overline{\inte(M)}$ is open in $\R^2$. Thus it follows that $x\in\partial M$, or $\ol x\in\S^1$, which in turn  implies  that $\|f\|\leq 1$, or $\ol M\subset D$.
Now since $\overline{\partial M}\subset \S^1=\partial D$,  $f\colon M\to D$ is a local homeomorphism. To see this let $U$ be an open neighborhood  in $M$ such that $f$ is one-to-one on the closure $\cl U$ of $U$. Then $\cl U$ is homeomorphic to $\overline{\cl U}$ (any one-to-one continuous map from a compact space into a Hausdorff space is a homeomorphism onto its image). So $U$ is homeomorphic to $\ol U$. Further since
$\overline{(U\cap\partial M)}\subset \partial D$ it follows that $\ol U$ is open in $D$, as claimed.
Now since $M$ is compact and $D$ is connected,  $f$ is a covering map (this is a basic topological fact, e.g., see \cite[p. 375]{docarmo:curves}). But $D$ is simply connected, and $M$ is connected; therefore, $f$ is one-to-one.
\end{proof}

 For every $x\in\R^2$ let $B_r(x)$ denote the (closed) disk of radius $r$ centered at $x$. Then for any  $X\subset\R^2$, we set $B_r(X):=\cup_{x\in X}B_r(x)$.

\begin{lem}\label{lem:X}
Let $D\to\R^2$ be an immersion, and $A\subset\partial D$ be a closed set such that $\ol A$ is simple. Then for every closed connected set $X\subset \inte(A)$ and $\epsilon>0$, there exists a connected  open neighborhood $U$ of $X$ in $D$ such that $\ol U$ is simple and lies in $B_\epsilon(\ol X)$. Furthermore, 
$\overline{U-A}$ is  open, connected, and  $\overline{U-A}\cap\overline A=\emptyset$.
\end{lem}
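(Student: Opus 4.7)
The plan is to let $U$ be the thin tubular neighborhood $\{q\in D:\dist(q,X)<\delta\}$ of $X$ in $D$ and to shrink $\delta$ enough to force every required property. The proof divides naturally into (a) choosing $\delta$ to make $\ol U$ simple and contained in $B_\epsilon(\ol X)$, and (b) a further shrinking to secure the three assertions about $U-A$.

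For (a), I would first produce a uniform local injectivity radius on $X$: since $f$ is a local homeomorphism, each $p\in X$ has an open neighborhood $V_p\subset D$ on which $f$ is injective, and a Lebesgue-number argument applied to a finite subcover of $X$ by such $V_p$'s yields $\delta_0>0$ such that $f$ is injective on $B_{\delta_0}(p)\cap D$ for every $p\in X$. Next, since $f|_A$ is a continuous injection on the compact set $A$, it is a homeomorphism onto $\ol A$, so there is $\mu>0$ such that $p,q\in A$ and $\|\ol p-\ol q\|<\mu$ imply $\|p-q\|<\delta_0/2$. Finally, by uniform continuity of $f$ on a compact neighborhood of $X$ in $D$, choose $\delta<\delta_0/2$ so small that $f(B_\delta(p)\cap D)\subset B_{\min(\mu/3,\epsilon/2)}(\ol p)$ for every $p\in X$, and also so that $U\cap\partial D\subset A$ (possible since $X\subset\inte(A)$ within $\partial D$). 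Simplicity of $\ol U$ then follows from a triangle-inequality argument: if $q_i\in B_\delta(p_i)\cap D$ with $p_i\in X$ and $f(q_1)=f(q_2)$, then $\|\ol p_1-\ol p_2\|<2\mu/3<\mu$, forcing $\|p_1-p_2\|<\delta_0/2$, so both $q_i$ lie in $B_{\delta_0}(p_1)\cap D$ and injectivity there gives $q_1=q_2$. The containment $\ol U\subset B_\epsilon(\ol X)$ is automatic, and connectedness of $U$ follows from connectedness of $X$ together with convexity of $D$ (join each point of $U$ to a closest point of $X$ by a line and then traverse $X$).

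For (b), the condition $U\cap\partial D\subset A$ forces $U-A\subset\inte(D)$, where the immersion is an open map by invariance of domain, so $\ol{U-A}=f(U-A)$ is open; connectedness of $\ol{U-A}$ follows by pushing the linear paths of the previous argument slightly inward and using continuity of $f$. The crux is $\ol{U-A}\cap\ol A=\emptyset$. Collisions $f(q)=f(a)$ with $q\in U-A$ and $a\in A\cap U$ are ruled out by injectivity of $f|_U$, since $q=a$ would then contradict $q\notin A\ni a$. The remaining case $a\in A-U$ is the main obstacle, since it cannot be handled by local injectivity alone; I would rule it out by a compactness argument: if no admissible $\delta$ existed, one could find $q_n$ with $\dist(q_n,X)<1/n$ and $a_n\in A$ with $\dist(a_n,X)\geq 1/n$ satisfying $f(q_n)=f(a_n)$, and passing to subsequences would produce $q_n\to p\in X$ and $a_n\to a\in A$ with $f(a)=f(p)$. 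Injectivity of $f|_A$ then forces $a=p$, contradicting $\|a_n-p\|\geq\dist(a_n,X)\geq 1/n$ for large $n$. Hence upon one final shrinking of $\delta$, the tube $U$ also satisfies $f(U)\cap f(A-U)=\emptyset$, completing the verification.
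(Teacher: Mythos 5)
Your construction of $U$ and the argument for simplicity of $\ol U$ (uniform injectivity radius via a Lebesgue number, plus the modulus of continuity for $(f|_A)^{-1}$ and the triangle inequality) are sound, and so are the claims that $U\cap\partial D\subset A$, that $\ol U\subset B_\epsilon(\ol X)$, and that $\overline{U-A}$ is open. The gap is in the very last step, the compactness argument for $\overline{U-A}\cap\ol A=\emptyset$. After you conclude $a=p$ from injectivity of $f|_A$, you claim this contradicts $\|a_n-p\|\geq \dist(a_n,X)\geq 1/n$. It does not: $a_n\to p$ is perfectly compatible with $\|a_n-p\|\geq 1/n$, since both sides tend to $0$ (take $a_n$ at distance exactly $2/n$ from $p$). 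So as written the case $a=p$ is not excluded and the proof of the key disjointness assertion is incomplete. The repair is short and uses only a tool you already invoked: since $q_n\to p$ and $a_n\to p$, for large $n$ both points lie in a neighborhood of $p$ on which $f$ is injective, so $f(q_n)=f(a_n)$ forces $q_n=a_n$, contradicting $q_n\notin A$ while $a_n\in A$. With that correction the argument closes.

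For comparison, the paper gets the disjointness statement without any limiting argument: it sets $V:=\inte(B_\delta(A))\cap D$, observes that $\ol V$ is simple by the same tubular-neighborhood principle used for $\ol U$ (applied now to the compact set $A$ with $\ol A$ simple), and then, since $U\subset V$ and $A\subset V$ and $f$ is injective on $V$, concludes $\overline{(U-A)}\cap\ol A=\overline{(U-A)\cap A}=\emptyset$ in one line. Your route is more self-contained (you prove the simplicity of the tube from scratch rather than citing it as a known fact), at the cost of the separate compactness argument whose final step needs the fix above. Both you and the paper are somewhat brisk about the connectedness of $U-A$; your suggestion to push the connecting paths off $\partial D$ is the right idea and should be spelled out if this is to be fully rigorous.
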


\begin{proof}
Let $U:=\inte (B_\delta(X))\cap D$, see Figure \ref{fig:D&X}. 
We claim that if $\delta>0$ is sufficiently small, then $U$ is the desired set.
Indeed (for small $\delta$)  $\ol U\subset B_\epsilon(\ol X)$, since $D\to\R^2$ is continuous and $X$ is compact.
Further, since $D\to\R^2$ is locally one-to-one, $X$ is compact, and $\ol X$ is simple, it follows that $\ol U$ is simple (this is a  basic fact, e.g., see \cite[p. 345]{spivak:v1}). 
 \begin{figure}[h] 
   \centering
   \begin{overpic}[height=0.8in]{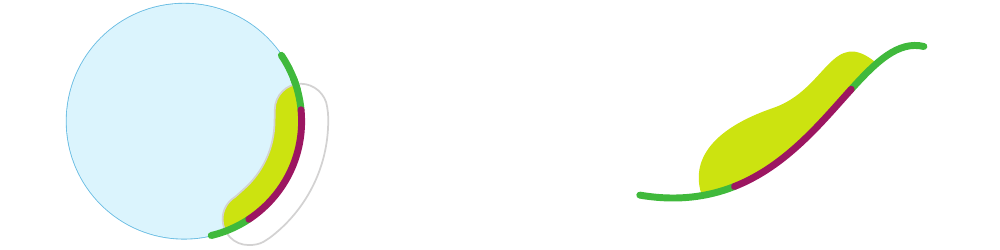} 
   \put(17,12){$D$}
   \put(24,6){$U$}
    \put(29,5){$X$}
     \put(30,19){$A$}
      \put(73,11){$\ol U$}
       \put(81,6){$\ol X$}
        \put(94,19){$\ol A$}
   \end{overpic}
   \caption{}
   \label{fig:D&X}
\end{figure}
Next note that since $X\subset\inte(A)$,  $X$ is disjoint from $\partial D-\inte(A)$, which is compact. Thus $U$ will be disjoint from $\partial D-A$ as well. Consequently 
$U-A=U-\partial D$
which  is open in $\R^2$. So, since $\overline{U-A}$ is simple, it follows from the  invariance of domain that $\overline{U-A}$ is open, and it is connected as well since $U -A$ is connected.
 Finally note that if we set $V:=\inte (B_\delta(A))\cap D$, then $\ol V$ will be simple, just as we had argued earlier for $\ol U$. So, since $U$, $A\subset V$, we have
$
\overline{(U-A)}\cap\overline{A}=\overline{(U-A)\cap A}=\emptyset.
$
\end{proof}

Now we are ready to prove the main result of this section:

\begin{proof}[Proof of Theorem \ref{thm:disk}]
We will extend $f$ to an immersion $\tilde f\colon M\to\R^2$ where $M$ is a compact connected surface containing $D$, $\tilde f=f$ on $D$, and $\tilde f(\partial M)$ lies on a simple closed curve. Then $\tilde f$ is one-to-one by Lemma \ref{lem:preX}, and hence so is $f$.

\emph{(Part I: Constructions of $M$ and $\tilde f$.)}
Let $C\subset\R^2$ be a  circle which encloses $\ol D$ and is disjoint from it. Then each ray $R_i$ must intersect $C$ at some point. Let $q_i\in R_i$ be the first such point,  assuming that $R_i$ is oriented so that $\ol p_i$ is its initial point, see Figure \ref{fig:disk}.   
  \begin{figure}[h] 
   \centering
   \begin{overpic}[height=1.1in]{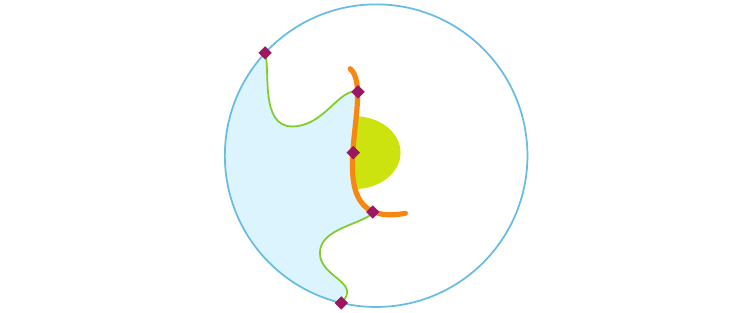} 
   \put(33,15){$D_i$}
   \put(43,20){$\ol x$}
   \put(50,18){$\ol W$}
    \put(33,38){$q_i$}
     \put(43,-3){$q_{i+1}$}
      \put(50,30){$\ol p_i$}
     \put(50,8.5){$\ol p_{i+1}$}
   \end{overpic}
   \caption{}
   \label{fig:disk}
\end{figure}
Now set
 $
 A_i:=\ol p_i q_i\cup\overline{p_i p_{i+1}}\cup\ol p_{i+1}q_{i+1}.
 $
  Then, by conditions (i) and (ii) of the theorem, $A_i$ is a simple curve. Consequently it divides  the disk bounded by $C$ into a pair of closed subdisks, which we call the sides of $A_i$.    
      Let $x\in\inte(p_ip_{i+1})$. By Lemma \ref{lem:X}, there is an open neighborhood $W$ of $x$ in $D$ such that 
     $\ol W-\overline{p_ip_{i+1}}$ is connected and is disjoint from $\overline{p_ip_{i+1}}$.  Further,  choosing $\epsilon$ sufficiently small in Lemma \ref{lem:X}, we can make sure that $\ol W$ is disjoint from $\ol p_i q_i$ and $\ol p_{i+1}q_{i+1}$. So it follows that $\ol W-A_i$ is connected and is disjoint from $A_i$.
     Consequently,  it lies in the interior of one of the sides of $A_i$. Let  $D_i$ be the opposite side.
       Glue each $D_i$ to $D_{i+1}$ along $\ol p_{i+1} q_{i+1}$. Further, glue each $D_i$ to $D$ by  identifying $p_i p_{i+1}$ with $\overline{p_i p_{i+1}}$ via $f$. This yields a compact connected surface $M$  which contains $D$. Define $\tilde f\colon M\to \R^2$ by letting $\tilde f= f$ on $D$, and  $\tilde f$ be the inclusion map $D_i\hookrightarrow\R^2$ on $D_i$. Then $\tilde f$ is continuous and $\tilde f(\partial M)\subset C$ as desired.

\emph{(Part II: Local injectivity of $\tilde f$.)}
Recall that $\tilde f$ is locally one-to-one on the interiors of $D$ and each $D_i$ by definition.  Also note that $\tilde f$ is one-to-one  
  near every point  of $C$ different from $q_i$. So 
   it remains to check that $\tilde f$ is  one-to-one near every point of  $\overline{p_ip_{i+1}}$ and $\ol p_{i}q_{i}$. There are four cases to consider:
 
\emph{(i)}
 First we check the points $\overline{x'}\in \inte(\overline{p_ip_{i+1}})$. It suffices to show that  there exists an open neighborhood $W'$ of $x'$ in $D$ such that $\overline{W'}-A_i$ is disjoint from $D_i$ (this would show that $D_i$ and $\ol D$ lie on different sides of $A_i$ near $\overline{x'}$).
 To see this let $X$ be the segment $xx'$ of $A_i$, and 
  $W'$ be a small open neighborhood of $X$ in $D$ given by Lemma \ref{lem:X}. Then,  just as we had argued earlier, $\overline{W'}-A_i$ will be disjoint from $A_i$, and thus will lie on one side of it. Since $x\in X$, $\overline{W'}-A_i$ intersects $\overline W-A_i$, which by definition lies outside $D_i$. Thus $\overline{W'}-A_i$ also lies outside $D_i$, as claimed.

\emph{(ii)} 
 Next we check $p_i$. Let $B:=B_\epsilon(\ol p_i)$, where $\epsilon>0$ is so small that $\ol p_{i-1}$, $\ol p_{i+1}$ and $q_i$ lie outside $B$. 
  Let $a$, $b$, $c$ be the first points where the (oriented curves) $\overline{p_i p_{i-1}}$, $\overline{p_i p_{i+1}}$, $\ol p_i q_i$ intersect $\partial B$ respectively.  Assuming $\epsilon$ is  small,  $ab$  will be simple, since $\overline{\partial D}$ is locally simple. Also note that $\ol p_i c$ is simple, since $R_i$  is simple. Furthermore, $\ol p_i c\cap ab\subset R_i\cap\overline{p_{i-1}p_{i+1}}=\{p_i\}$ by the second condition of the theorem. So  $ab\cup\ol p_i c$ divides $B$ into $3$ closed sectors, see the left diagram in Figure \ref{fig:disk2}. 
    \begin{figure}[h] 
   \centering
   \begin{overpic}[height=1in]{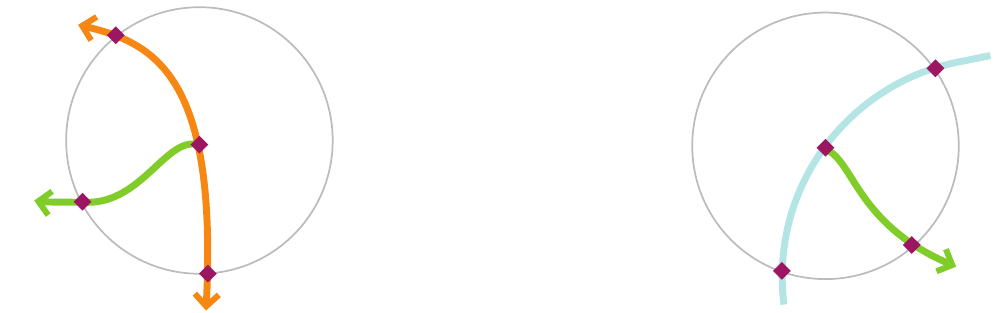} 
   \put(22,16){$\ol p_i$}
   \put(19,-3){$\ol p_{i+1}$}
   \put(-1,29){$\ol p_{i-1}$}
   \put(-1,10){$q_i$}
    \put(78,17){$q_i$}
    \put(97,2){$\ol p_{i}$}
     \put(11,29.5){$a$}
     \put(7,8){$c$}
     \put(22,3){$b$}
      \put(23,22){$S_1$}
      \put(10,18){$S_2$}
      \put(14,8){$S_3$}
      \put(100,24){$C$}
   \end{overpic}
   \caption{}
   \label{fig:disk2}
\end{figure}
   Let $S_1$ be the sector which contains $a$ and $b$, $S_2$ be the sector which contains $a$ and $c$, and $S_3$ be the sector which contains $c$ and $b$. 
 Next note that an open neighborhood of $p_i$ in $M$ consists of three components: 
a neighborhood $V_i$ of $p$ in $D$, and   neighborhoods $U_i$, $U_{i-1}$, of $\ol p_i$ in $D_i$, $D_{i-1}$ respectively. We claim that when these neighborhoods are small,  each lies in a different sector of $B$. Then, since $\tilde f$ is one-to-one on each of these neighborhoods, it will  follow that $\tilde f$ is one-to-one near $p_i$, as desired. To establish the claim note that by the third condition of the theorem there is an open neighborhood $V_i$ of $p_i$ in $D$ such that $\ol V_i$ is disjoint from the interior of $\ol p_i c$ (assuming $\epsilon$ is small). Further, we may assume that $V_i$ is connected and is so small that $\ol V_i$ fits inside  $B$. Then $\ol V_i$ must lie in $S_1$. Next, by Lemma \ref{lem:X}, we may choose a connected open neighborhood $U_i$ of $\ol p_i$ in $D_i$ such that $\ol U_i=U_i$ fits in $B$, and $\ol U_i -\overline{bc}=U_i-bc$ is connected, where $bc:=\ol p_i b\cup \ol p_i c$.  Note that  $U_i$  contains some interior points of $\ol p_i c$ and $\overline{p_i p_{i+1}}$. So $U_i-bc$ cannot lie entirely in $S_1$ or $S_2$, and therefore intersects $S_3$. Consequently $U_i-bc\subset S_3$, because $U_i-bc$ is connected and  disjoint from the boundary of $S_3$. So $U_i\subset S_3$. A similar argument shows that  $U_{i-1}\subset S_2$.

\emph{(iii)}
 Now we check the points $x'\in\inte(\ol p_i q_i)$. Let $X\subset \inte(\ol p_i q_i)$ be a connected compact set which contains $x'$ and a point of the neighborhood $U_{i-1}$ of $\ol p_i$  discussed in part (ii). Then again by Lemma \ref{lem:X}, there exists a connected open neighborhood $W'$ of $X$ in $D_{i-1}$ such that $W'-A_i$ lies entirely on one side of $A_i$. By design $W'-A_i$ intersects $U_{i-1}-A_i$, which lies outside $D_i$ as we showed in part (ii). Thus $W'-A_i$ also lies outside $D_i$. So $D_i$, $D_{i-1}$ lie on opposite sides of $A_i$ near $x'$, which shows that $\tilde f$ is one-to-one near $x'$.

\emph{(iv)}
 It remains to check $q_i$. Again, we have to show that there exists an open neighborhood  of $q_i$ in $D_{i-1}$ which lies outside $D_i$. The argument  is similar to that of part (ii), and uses part (iii).  Let $B:=B_\epsilon(q_i)$, where $\epsilon>0$ is so small that $B$  intersects $C$ in precisely two points and $\ol p_i$ lies outside $B$, see the right diagram in Figure \ref{fig:disk2}. Then the segment of $C$ in $B$ together with the  smallest segment of $q_i\ol p_i$  in $B$ determine three sectors. Only two of these sectors border both $C$ and a neighborhood of $q_i$ in $q_i\ol p_i$, and these are where $D_i$ and $D_{i-1}$ lie near $q_i$. We have to show that, near $q_i$, $D_i$ and $D_{i-1}$ lie in different sectors. To this end it suffices to note that every open neighborhood of $q_i$ in $D_{i-1}$, given by Lemma \ref{lem:X}, intersects a neighborhood of the type $W'$ discussed in part (iii), which lies outside $D_i$.
\end{proof}

\section*{Appendix B: Index of Symbols}

\bigskip

\begin{tabular}{l l l}
Symbol \hspace*{0.5in}& Principal Use \hspace*{2.75in}& Section\\
 & & \\
$P$&  a convex polyhedron  &  \ref{sec:intro}\\
$T$ &  a cut tree of $P$  &  \ref{sec:intro}\\
$P_T$ &  the compact disk obtained by cutting $P$ along $T$ &  \ref{sec:trees}\\
$h$ &  the height function &  \ref{subsec:term}\\
$\lambda$ & the stretching factor &  \ref{sec:intro}\\
$\pi\colon P_T\to P$ &  the natural projection &  \ref{sec:trees}\\
$\ol P_T$ & image of $P_T$ under an unfolding &  \ref{subsec:term}\\
$\Gamma=\Gamma_T$  &  the tracing path of $T$ &  \ref{sec:trees}\\
$\ol\Gamma$ &  image of $\Gamma$ under a (left) development &  \ref{sec:mixed}\\
$(\ol\Gamma)_{\gamma_i}$ &  a mixed development of $\Gamma$ based at the vertex $\gamma_i$ &  \ref{sec:mixed}\\ 
$\st_o$ & star of $P$ at a point $o$, &  \ref{subsec:sidesAndangles}\\
$\tilde \st_{\tilde o}$ & star of $P_T$ at a point $\tilde o$ &  \ref{sec:trees}\\
$[\gamma_0,\dots,\gamma_k]$ & a path with vertices $\gamma_i$ &  \ref{subsec:paths}\\
$\bullet$ & the operation for concatenation of two paths &  \ref{subsec:paths}\\
$\circ$ & the operation for  composition of two paths &  \ref{subsec:paths}\\
$\Gamma^{-1}$ & inverse of a path $\Gamma$ &  \ref{subsec:paths}\\
$\angle_P(o)$ & total angle of $P$ at a point $o$ &  \ref{subsec:sidesAndangles}\\
$\angle (a, o, b)$ &  (left) angle of the path $[a,o,b]$ at $o$ &  \ref{subsec:sidesAndangles}\\
$\theta_i$,  &{left angles of $\Gamma$} &  \ref{sec:mixed}\\
$\theta_i'$,  &{right angles of $\Gamma$} &  \ref{sec:mixed}\\
$v_i$ & vertices of $\Gamma_T$ &  \ref{sec:trees}\\
$\ol v_i$ & vertices of $\ol\Gamma_T$ which correspond to $v_i$ &  \ref{sec:trees}\\
$\tilde v_i$ & vertices of $P_T$ which correspond to $v_i$ &  \ref{sec:trees}\\
$\ell_i$ & leaves of $T$ as ordered by $\Gamma_T$ &  \ref{sec:monotonetrees}\\
$\ell_0$ & the top leaf of $T$ &  \ref{subsec:term}\\
$r$ & the  root of $T$ &  \ref{subsec:term}\\
$j_i$ & junctures of  $\Gamma_T$ &  \ref{sec:monotonetrees}\\
$\beta _i$ & branches of $T$ &  \ref{sec:monotonetrees}\\
$\beta _i'$ &  dual branches of $T$ &  \ref{sec:monotonetrees}\\
$\Gamma_i$ &  concatenation of the subpath $\ell_0\ell_i$ of $\Gamma_T$ with $\beta_i$ &  \ref{sec:monotonetrees}\\
$\Gamma_i'$ & concatenation of the subpath $\ell_0\ell_i$ of $\Gamma_T$ with $\beta_i'$ &  \ref{sec:monotonetrees}\\
$\tilde\Gamma_i'$ & the closed path in $P_T$ corresponding to $\Gamma_i'$ & \ref{sec:monotonetrees}\\
$D_i$ & the sub disk of $P_T$ bounded by $\tilde\Gamma_i'$ & \ref{sec:monotonetrees}\\
$D\Gamma$ & doubling of a path $\Gamma$ & \ref{sec:affine}\\

\end{tabular}

\newpage

\section*{Acknowledgement}
The author thanks A. J. Friend for computer experiments to test  Theorem \ref{thm:main} in the early stages of this work.

\bibliographystyle{abbrv}
\bibliography{references}

\end{document}